\newcommand{\zerocycle}[2]{X^{\vec{#1}}_{#2}}
\newcommand{\dlog}{d\log}
\newcommand{\calR}{\mathcal{R}}
\newcommand{\calH}{\mathcal{H}}
\newcommand{\calZ}{\mathcal{Z}}
\newcommand{\Kap}{\mathrm{Kap}}
\newcommand{\ul}[1]{\vec{#1}}
\newcommand{\LL}{\bbL}
\newcommand{\bt}{\mathbf{t}}
\numberwithin{equation}{subsubsection}
\theoremstyle{plain}
\newtheorem{maintheorem}{Theorem}
\newtheorem*{theorem*}{Theorem}
\newtheorem*{metaconjecture}{Meta-conjecture}
\newtheorem{theorem}[subsubsection]{Theorem}
\newtheorem{corollary}[subsubsection]{Corollary}
\newtheorem{conjecture}[subsubsection]{Conjecture}
\newtheorem{proposition}[subsubsection]{Proposition}
\newtheorem{lemma}[subsubsection]{Lemma}
\theoremstyle{definition}
\newtheorem{example}[subsubsection]{Example}
\newtheorem{definition}[subsubsection]{Definition}
\newtheorem{remark}[subsubsection]{Remark}
\theoremstyle{remark}
\newcommand{\Gal}{\mathrm{Gal}}
\newcommand{\Conf}{C}
\newcommand{\Sym}{\mathrm{Sym}}
\newcommand{\Var}{\mathrm{Var}}
\newcommand{\HS}{\mathrm{HS}}
\newcommand{\calL}{\mathcal{L}}
\newcommand{\calO}{\mathcal{O}}
\newcommand{\calM}{\mathcal{M}}
\newcommand{\bbN}{\mathbb{N}}
\newcommand{\bbZ}{\mathbb{Z}}
\newcommand{\bbQ}{\mathbb{Q}}
\newcommand{\bbR}{\mathbb{R}}
\newcommand{\bbC}{\mathbb{C}}
\newcommand{\bbP}{\mathbb{P}}
\newcommand{\bbA}{\mathbb{A}}
\newcommand{\bbL}{\mathbb{L}}
\newcommand{\bbF}{\mathbb{F}}
\newcommand{\bc}{\mathbf{c}}
\newcommand{\Spec}{\mathrm{Spec}\,}
\begin{document}

\title{Zeta statistics and Hadamard functions}
\author{Margaret Bilu}
\address{IST Austria \\
Am Campus 1 \\
3400 Klosterneuburg, Austria}

\author{Ronno Das}
\address{Department of Mathematics\\
University of Chicago\\
5734 S University Ave\\
Chicago, IL 60637, USA}

\author{Sean Howe}
\address{Department of Mathematics \\
155 South 1400 East, JWB 233 \\
Salt Lake City\\ UT~84112, USA}

\begin{abstract} We introduce the Hadamard topology on the Witt ring of rational functions, giving a simultaneous refinement of the weight and point-counting topologies. Zeta functions of algebraic varieties over finite fields are elements of the rational Witt ring, and the Hadamard topology allows for a conjectural unification of results in arithmetic and motivic statistics: The completion of the Witt ring for the Hadamard topology can be identified with a space of meromorphic functions which we call Hadamard functions, and we make the meta-conjecture that any ``natural" sequence of zeta functions which converges to a Hadamard function in both the weight and point-counting topologies converges also in the Hadamard topology. For statistics arising from Bertini problems, zero-cycles or the Batyrev-Manin conjecture, this yields an explicit conjectural unification of existing results in motivic and arithmetic statistics that were previously connected only by analogy. As evidence for our conjectures, we show that Hadamard convergence holds for many natural statistics arising from zero-cycles, as well as for the motivic height zeta function associated to the motivic Batyrev-Manin problem for split toric varieties. 
\end{abstract}

\maketitle

\tableofcontents

\section{Introduction}

\subsection{Zeta functions}
\subsubsection{Zeta functions of varieties over finite fields}
The  zeta function of a variety $X/\bbF_q$ is the formal power series $Z_X(t)\in 1+t\bbZ[[t]]$ defined by
\[ Z_X(t) := \prod_{x \in X \textrm{ closed }} \frac{1}{1-t^{\deg x}}  = \sum_{j=0}^\infty |\Sym^j X(\bbF_q)| t^j. \]
It encodes the number of points of $X$ over every finite extension of $\bbF_q$:
\begin{equation}\label{eqn.zeta-point-counts} \dlog Z_X(t) = \sum_{j=1}^\infty |X(\bbF_{q^j})|t^{j-1}. \end{equation}
The Grothendieck-Lefschetz fixed point formula implies that $Z_X(t)$ is the power-series expansion at $0$ of a rational function, and that the zeroes and poles are determined by the eigenvalues of the Frobenius acting on the \'{e}tale cohomology of $X$ (up to cancellation between odd and even degree). We write $\calR_1$ for the set of rational functions $f \in \bbC(t)$ such that $f(0)=1$, and from now on consider $Z_X(t)$ as an element of $\calR_1$.

\subsubsection{Grothendieck rings of varieties}\label{sss.groth-ring} Let $K$ be a field. We write $K_0(\Var/K)$ for the \textbf{modified} Grothendieck ring of varieties over $K$ --- it is the free abelian group on isomorphism classes $[X]$ of (not necessarily connected) varieties $X/K$, modulo the relations $[X_1 \sqcup X_2]=[X_1] + [X_2]$ and $[X]=[Y]$ if there is a map $X \rightarrow Y$ inducing a bijection on points over any algebraically closed field. This definition is equivalent to the classical definition of $K_0(\Var/K)$ via cut and paste relations in characteristic zero, but is better behaved in positive characteristic; we refer to \cite[Section 2]{bilu-howe} for a detailed discussion. Write $\bbL:=[\bbA^1] \in K_0(\Var/K),$ and 
\[ \calM_K:= K_0(\Var/K)[\bbL^{-1}]. \]
 There is a natural topology on $\calM_K$ induced by the \emph{dimensional filtration}: for every $d\in \bbZ$, we define $\mathrm{Fil}_d\calM_K$ to be the subgroup of $\calM_K$ generated by elements of the form $[X]\bbL^{-n}$ where $X$ is a variety over $K$ and $\dim X - n\leq d$. This gives us an increasing and exhaustive filtration on the ring $\calM_K$. Denote the completion for this filtration by $\widehat{\calM}_K$.

For a quasi-projective variety $X$ over $K$, consider the \emph{Kapranov zeta function}
$$Z_X^{\Kap}(t) := \sum_{j=0}^{\infty}[\Sym^j X]t^j \in 1 + t\calM_K[[t]],$$
When the field $K$ is finite, $Z_{X}^{\Kap}(t)$ specializes to $Z_{X}(t)$ via point counting, which replaces the class $[\Sym^j X]$ with the number $|\Sym^j X(K)|$ in the coefficients.

\subsection{Arithmetic and motivic statistics}
Many results in arithmetic statistics can be interpreted in terms of asymptotic properties of the number of $\bbF_q$-points on a sequence of varieties $X_n/ \bbF_q$. Such results often have analogs in \emph{motivic} statistics --- these are asymptotic statements in $\widehat{\calM}_K$. A beautiful example of such a correspondence between arithmetic and motivic results is given by the following:

\begin{theorem*} Let $K$ be a field and let $X\subset \bbP^n_K$ be a smooth projective variety. Denote by $U_d$ the open subset of hypersurface sections in $\Gamma(\bbP^n_K, \mathcal{O}(d))$ which intersect $X$ transversely. Then:
\begin{enumerate} \item (Poonen \cite{poonen:bertini}) Assume $K = \bbF_q$ is finite. Then
$$\lim_{d\to \infty} \frac{|U_d(\bbF_q)|}{|\Gamma(\bbP^n,\mathcal{O}(d))(\bbF_q)|}  = Z_{X}(q^{-\dim X - 1})^{-1}.$$
\item  (Vakil-Wood \cite{vakil-wood,vakil-wood-errata}) In $\widehat{\calM}_K$, we have
$$\lim_{d\to \infty} \frac{[U_d]}{[\Gamma(\bbP^n,\mathcal{O}(d))]} = Z_{X}^{\Kap}(\bbL^{-\dim X - 1})^{-1}.$$
\end{enumerate}
\end{theorem*}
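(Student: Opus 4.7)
The plan is to prove (1) and (2) in parallel via a local-global sieve in the style of Poonen, with the motivic version handled as in Vakil-Wood. The key geometric observation is that a section $f \in \Gamma(\bbP^n, \mathcal{O}(d))$ intersects $X$ transversely at a closed point $x \in X$ if and only if the image of $f$ in $(\mathcal{O}_X/\mathfrak{m}_x^2)(d)$ is nonzero. For $d$ sufficiently large relative to $\deg x$, the evaluation map
\[ \mathrm{ev}_x \colon \Gamma(\bbP^n, \mathcal{O}(d)) \twoheadrightarrow (\mathcal{O}_X/\mathfrak{m}_x^2)(d) \]
is surjective, and the target is a $k(x)$-vector space of dimension $\dim X + 1$. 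Hence the locus of sections failing transversality at $x$ is a linear subspace of codimension $(\dim X + 1)\deg x$ over $\bbF_q$, respectively $K$, giving local failure probability $q^{-(\dim X+1)\deg x}$ in (1) and local motivic density $\bbL^{-(\dim X+1)\deg x}$ in (2).

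The second step is an inclusion-exclusion. Over any finite set $S$ of closed points of $X$, the maps $\mathrm{ev}_x$ for $x \in S$ are jointly surjective onto $\prod_{x \in S}(\mathcal{O}_X/\mathfrak{m}_x^2)(d)$ once $d$ is large enough, so the probability, respectively motivic class, that $f$ is transverse along $S$ factors as $\prod_{x \in S}(1 - q^{-(\dim X+1)\deg x})$ in (1) and $\prod_{x \in S}(1 - \bbL^{-(\dim X+1)\deg x})$ in (2). Restricting $S$ to points of degree at most a slowly growing threshold $r(d) \to \infty$, the partial products converge in the appropriate topology to the Euler products $Z_X(q^{-\dim X - 1})^{-1}$ and $Z_X^{\Kap}(\bbL^{-\dim X - 1})^{-1}$ respectively, by definition of $Z_X$ and $Z_X^{\Kap}$.

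The main obstacle is controlling the error from sections that fail transversality at some point of degree exceeding $r(d)$. Following Poonen, one splits this error into a ``medium'' range (from $r(d)$ up to roughly $d/(\dim X + 1)$) and a ``high'' tail (the rest). The medium range is handled by combining the local failure rate $q^{-(\dim X + 1)\deg x}$ with the Lang-Weil bound on the number of closed points of $X$ of degree $e$, yielding a geometric series that decays in $r(d)$. The high tail is subtler: Poonen stratifies candidate bad sections by the order of vanishing of their partial derivatives at the candidate singular point and uses a linear-algebra argument to show that the fraction of bad sections on each stratum is small. Vakil and Wood adapt both steps geometrically, bounding the \emph{dimension} of the locus of bad sections in $\Gamma(\bbP^n, \mathcal{O}(d))$ rather than the probability, which is exactly what is needed for convergence in $\widehat{\calM}_K$. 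These estimates for the high tail are where essentially all of the technical work lies.
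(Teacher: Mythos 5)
This statement is quoted background: the paper does not prove it, but cites Poonen and Vakil--Wood, so the only thing to compare your proposal against is the original arguments --- and your outline is indeed their strategy. The low-degree part of your sketch is accurate: transversality at a closed point $x$ of the smooth variety $X$ is the nonvanishing of the image of $f$ in $H^0(X,(\calO_X/\frakm_x^2)(d))$, a $k(x)$-vector space of dimension $\dim X+1$; joint surjectivity of the evaluation maps for a finite set of low-degree points gives the truncated products; and the medium range is disposed of by Lang--Weil together with the codimension-$(\dim X+1)\deg x$ bound for a single point.

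The genuine gap is that the high-degree tail, which you yourself concede carries ``essentially all of the technical work,'' is only named, not proved, and it does not follow from anything you set up. In Poonen's argument this is the characteristic-$p$ decoupling trick (writing $f=f_0+g_1^p x_1+\cdots+g_n^p x_n+g_{n+1}^p$ so that the partial derivatives along local parameters of $X$ become independent as the $g_i$ vary, then an induction on the successive vanishing loci); your description of it as a stratification by order of vanishing is not the actual mechanism, and no estimate is supplied. On the motivic side the situation is worse for your ``parallel'' framing: over a general field $K$ there is no measure and no sum over closed points, so the phrase ``local motivic density $\bbL^{-(\dim X+1)\deg x}$'' has no direct meaning; Vakil--Wood must replace pointwise probabilities by classes of incidence loci fibered over $\Sym^e X$ (motivic inclusion-exclusion), and the key input is a dimension bound, uniform in $d$, for the locus of sections singular at some point of large degree. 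That bound is precisely the delicate step --- delicate enough that the published argument required the correction recorded in the errata the paper cites. So your proposal is a correct outline of the known proofs, but as a proof it is incomplete exactly where the theorems are hard.
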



Although the two results are compellingly similar for $K= \bbF_q$, neither of the two implies the other: the assignment sending a variety $X/\bbF_q$ to the number of points $|X(\bbF_q)|$ extends to the \emph{point counting measure}, a map of rings 
$\calM_{\bbF_q} \rightarrow \bbR$, but it is not continuous for the dimensional topology on $\calM_{\bbF_q}$. The aim of this paper is to formulate a conjectural unification of such parallel statements in arithmetic and motivic statistics, and study some aspects of this conjecture.

The fundamental insight behind our conjecture is that, through the lens of the \textit{zeta measure} (see (\ref{eqn-zeta-measure}) below), the aforementioned results of Poonen and Vakil-Wood may be viewed as convergence statements of the zeta functions of the varieties $U_d$, suitably renormalized, for two different, incompatible topologies on $\calR_1$. Indeed, via (\ref{eqn.zeta-point-counts}) we can interpet Poonen's theorem, applied simultaneously over all finite extensions of $\bbF_q$, as a convergence result for the zeta functions $Z_{U_d}(t)$ in the \textit{coefficient topology} on $\calR_1$, induced by the product topology on the coefficients of the power series at zero. Because of this interpretation, we refer to this topology on $\calR_1$ also as the \textit{point-counting topology}.
On the other hand, Vakil and Wood's result, via the Weil conjectures, implies a convergence statement about the functions $Z_{U_d}(t)$ in the \textit{weight topology} on $\calR_1$, where a function is considered small if all of its poles and zeroes are at large complex numbers. The point-counting and weight topologies are incompatible; below we introduce the \emph{Hadamard topology} which refines both. 

\subsection{Rings of zeta functions}
The set $\calR_1$ has a ring structure given by Witt addition and multiplication: if we identify $\calR_1$ with the group ring $\bbZ[\bbC^\times]$ via
\[ f(t) \rightarrow - \mathrm{Div} f(t^{-1}), \textrm{ so, e.g., }  (1-at)^{-1} \mapsto [a]\]
then Witt addition and multiplication are induced by addition and multiplication on the group ring $\bbZ[\bbC^\times]$. Alternatively, Witt addition $+_W$ is regular multiplication of rational functions: $f +_W g = fg$, and Witt multiplication $*_W$ is determined by
\[ \dlog f = \sum a_j t^j \textrm{ and } \dlog g = \sum b_j t^j \implies \dlog (f *_W g) = \sum a_j b_j t^j. \]
The assignment $X \mapsto Z_X(t)$ extends to the \emph{zeta measure}, which is a map of rings
\begin{equation}\label{eqn-zeta-measure}
\begin{array}{rcl}\calM_{\bbF_q}  &\rightarrow &\calR_1 \\
a & \mapsto& Z_a(t).
\end{array}
\end{equation}
\subsubsection{The Hadamard topology}
Under the identification $\calR_1 = \bbZ[\bbC^\times]$, the weight topology is induced by the norm
\[ \left\| \sum a_n [z_n] \right\|_\infty = \sup |z_n|. \]
The point-counting topology is induced by the family of seminorms
\[ \left\|\sum a_n [z_n] \right\|_{j} = \left| \sum a_n z_n^j \right| \textrm{ for all integers $j\geq 1$. } \]
We consider also the \emph{Hadamard} topology, defined by the Hadamard norm
\[ \left\| \sum a_n [z_n] \right\|_H = \sum |a_n||z_n|. \]
The Hadamard topology refines both the weight and point-counting topologies. Moreover, the completion of $\calR_1$ for the Hadamard norm is naturally identified with a genuine space of meromorphic functions (as opposed to the completion for the weight topology, which is a space of formal divisors, or the completion for the point-counting topology, which is a space of formal power series):
\begin{definition} 
A \emph{Hadamard function} is a meromorphic function on $\bbC$ that can be written as a quotient $\frac{f}{g}$ where $f$ and $g$ are  entire functions of genus zero. 
\end{definition}

We write $\calH_1$ for the set of Hadamard functions $f$ such that $f(0)=1$. The Hadamard factorization theorem then yields
\begin{theorem}
The completion of $\calR_1$ for $|| \cdot ||_{H}$ is canonically identified with $\calH_1$.
\end{theorem}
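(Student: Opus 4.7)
The plan is to describe both sides concretely in terms of weighted integer divisors on $\bbC^\times$ and then match them.

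First I would identify $\widehat{\calR_1}^H$ with the abelian group of formal integer divisors $D = \sum_{z \in \bbC^\times} a_z [z]$ on $\bbC^\times$ satisfying $\|D\|_H := \sum_z |a_z| \cdot |z| < \infty$; this is immediate from the definition of completion, since $\calR_1 \cong \bbZ[\bbC^\times]$ already consists of the finitely-supported such divisors. I would then define a candidate map $\Phi \colon \widehat{\calR_1}^H \to \calH_1$ by
\[ \Phi(D) := \frac{\prod_{z\,:\,a_z < 0}(1 - zt)^{|a_z|}}{\prod_{z\,:\,a_z > 0}(1 - zt)^{a_z}}. \]
The condition $\sum|a_z||z| < \infty$ is exactly what is needed for the easy direction of Hadamard's theorem: both the numerator and denominator converge locally uniformly on $\bbC$ to entire functions of genus zero, so $\Phi(D) \in \calH_1$ (with $\Phi(D)(0) = 1$). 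On finitely-supported divisors, $\Phi$ agrees with the identification $[a] \mapsto (1-at)^{-1}$, so it is a ring homomorphism extending the natural inclusion $\calR_1 \hookrightarrow \calH_1$, with Witt addition corresponding to multiplication of functions.

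Next I would verify that $\Phi$ is a norm-preserving bijection. Define on $\calH_1$ the analogous ``Hadamard norm'' $\|g\|_H := \sum_\gamma 1/|\gamma|$, summed over the zeros and poles $\gamma$ of $g$ with multiplicity; this is finite precisely because of the genus zero assumption on a numerator and denominator of $g$. After collecting like terms in $D$, the zeros and poles of $\Phi(D)$ sit at $\{1/z : a_z \ne 0\}$ with multiplicities $|a_z|$, yielding $\|\Phi(D)\|_H = \sum_z |a_z|\cdot|z| = \|D\|_H$, so $\Phi$ is isometric. For surjectivity, given $g \in \calH_1$ I would write $g = f_1/f_2$ with $f_i$ entire of genus zero and $f_i(0) = 1$, and apply Hadamard's factorization theorem in its hard direction: $f_1(t) = \prod_n(1 - \alpha_n t)$ and $f_2(t) = \prod_n(1 - \beta_n t)$ with $\sum|\alpha_n|, \sum|\beta_n| < \infty$; the divisor $D := \sum_n[\beta_n] - \sum_n[\alpha_n]$ then satisfies $\|D\|_H < \infty$ and $\Phi(D) = g$. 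Injectivity follows from the uniqueness of the locations and multiplicities of the zeros of convergent Weierstrass-type products: $\Phi(D) = 1$ has no zeros or poles in $\bbC^\times$, which forces $D = 0$.

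The central complex-analytic input is therefore Hadamard's factorization theorem, used both in the easy direction (to guarantee that $\Phi$ is well-defined on the whole completion) and in the hard direction (to supply surjectivity onto $\calH_1$). With these two ingredients in place, what remains is bookkeeping about sign conventions and the organization of zeros, poles, and divisors.
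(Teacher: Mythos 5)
Your proposal is correct and follows essentially the same route as the paper: the paper likewise identifies the completion $\calW$ with integer divisors $\sum_a k_a[a]$ on $\bbC^\times$ satisfying $\sum |k_a||a|<\infty$, splits each such divisor into its positive and negative parts, forms the corresponding products $\prod(1-at)^{\pm}$, and invokes the Hadamard factorization theorem for genus zero entire functions in both directions to get the bijection with $\calH_1$. Your write-up simply makes explicit the convergence, isometry, surjectivity, and injectivity steps that the paper compresses into the phrase ``immediate consequence of the Hadamard factorization theorem.''
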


\subsection{The meta-conjecture}

Because the Hadamard topology refines both the point-counting and weight topologies, asymptotics in the Hadamard topology give a common refinement of results in arithmetic and motivic statistics. Moreover, taking limits in the Hadamard topology retains the essential analytic characteristics of zeta functions, because these limits can be interpreted as meromorphic functions. For these reasons, it is natural to try to refine previous results in arithmetic and motivic statistics by studying them in the Hadamard topology. And in fact, we conjecture that any natural asymptotic which holds in both the weight and point-counting topologies should also hold in the Hadamard topology:

\begin{metaconjecture}\label{conj.meta} If $a_n \in \calM_{\bbF_q}$ is a ``natural" sequence of classes such that the sequence of zeta functions $Z_{a_n}(t)$ converges in both the point-counting and weight topology to some $f(t) \in \calH_1$, then $Z_{a_n}(t) \rightarrow f(t)$ also in the Hadamard topology.
\end{metaconjecture}

The condition that $f \in \calH_1$ is essential --- without this condition there is no way to compare limits in the point-counting and weight topologies. Moreover, there are natural examples where limits exist in both topologies, but at least one of these limits is not a Hadamard function (cf. \S\ref{subsub:non-example}).

\subsubsection{Hadamard convergence for Bertini problems}
The theorems of Poonen and Vakil-Wood discussed above furnish an example where our meta-conjecture should apply. To see this, we must verify that the special value of the Kapranov zeta function appearing there is in fact a Hadamard function: We apply the zeta measure coefficientwise to $Z_{X}^{\Kap}(s)$ to obtain a series with coefficients in the ring $\calR_1$, and then evaluate at $s=Z_{\bbL^{-m}}(t)=\frac{1}{1-q^{-m}t}$ for $m = \dim X+1$. Indeed, for any $m> \dim X$, if we write
\[ \zeta^\Kap_X(m) := 1 + Z_X(t) s + Z_{\Sym^2 X}(t) s^2 + \cdots |_{s=Z_{\bbL^{-m}}(t)} = \prod_{j \geq 1} Z_{\Sym^j X}(t q^{-mj}), \]
then the infinite product on the right (an infinite sum in the Witt ring structure) converges in the Hadamard topology to an invertible (for Witt multiplication) element of $\calH_1$. Thus, in this case the meta-conjecture specializes to 

\begin{conjecture}\label{conj.zeta-convergence-general} Let $X \subset \bbP^n_{\bbF_q}$ be a smooth projective variety and let $U_d \subset \Gamma(\bbP^n, \calO(d))$ be the open subvariety of hypersurfaces intersecting $X$ transversely. Then, in the Hadamard topology,
\begin{align*} \lim_{d \rightarrow \infty} Z_{U_d}( q^{-\dim U_d} t)&  =  1/_W \zeta_X^{\Kap}(\dim X + 1).
\end{align*}
\end{conjecture}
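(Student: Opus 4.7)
Set $N_d := \dim U_d = \binom{n+d}{n}$. Witt multiplication by $Z_{\bbL^{-N_d}}(t) = (1-q^{-N_d}t)^{-1}$ corresponds under the group-ring identification to $[q^{-N_d}]$ and implements the substitution $t \mapsto q^{-N_d}t$ on $\calR_1$; hence the LHS equals the zeta measure of $[U_d]\bbL^{-N_d} = [U_d]/[\bbA^{N_d}] \in \calM_{\bbF_q}$. Standard manipulation of motivic Euler products (as in \cite{bilu-howe}) gives
\[
Z_X^\Kap(\bbL^{-m})^{-1} = \prod_{x \in |X|}\left(1 - \bbL^{-m\deg x}\right) \quad \text{in } \widehat{\calM}_{\bbF_q} \text{ for } m > \dim X,
\]
and the definition of $\zeta_X^\Kap$ in the excerpt identifies the RHS with the zeta measure of $Z_X^\Kap(\bbL^{-\dim X - 1})^{-1}$. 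The conjecture thus amounts to upgrading the Vakil-Wood convergence $[U_d]/[\bbA^{N_d}] \to Z_X^\Kap(\bbL^{-\dim X - 1})^{-1}$ from the dimensional topology to the Hadamard topology under the zeta measure.

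\textbf{Sieve decomposition and main term.} Following Poonen and Vakil-Wood, fix $s \geq 1$ and assume $d > s(\dim X + 1)$. The first-order jet evaluation from $\Gamma(\bbP^n, \calO(d))$ at each closed point of $X$ of degree $\leq s$ is surjective and detects transversality, yielding
\[
[U_d]\bbL^{-N_d} = P_s - [R_{d,s}]\bbL^{-N_d}, \qquad P_s := \prod_{\substack{x \in |X|\\ \deg x \leq s}}\left(1 - \bbL^{-(\dim X + 1)\deg x}\right),
\]
where $R_{d,s}$ parametrizes sections transverse at each low-degree point but singular at some higher-degree one. Under the zeta measure, $Z_{P_s}$ unfolds in the group ring $\bbZ[\bbC^\times]$ as a signed sum $\sum_D (-1)^{|D|}[q^{-(\dim X+1)\deg D}]$ over reduced $0$-cycles $D$ on $X$ supported on closed points of degree $\leq s$, with Hadamard norm bounded by $\prod_{\deg x \leq s}(1 + q^{-(\dim X + 1)\deg x})$. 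Since the number of closed points of degree $k$ on $X$ is $O(q^{k\dim X}/k)$, this Euler product converges as $s \to \infty$, and the tail estimate $\|Z_{P_\infty} - Z_{P_s}\|_H = O(q^{-s})$ shows that $Z_{P_s}$ converges in $\calH_1$ to $1/_W \zeta_X^\Kap(\dim X + 1)$.

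\textbf{Error bound and main obstacle.} The remaining task is to show $\|Z_{[R_{d,s}]\bbL^{-N_d}}\|_H \to 0$ as $d \to \infty$ with $s$ growing appropriately (e.g.\ $s = \lfloor d/(2(\dim X + 1))\rfloor$). Poonen's sieve produces $|R_{d,s}(\bbF_{q^j})|/q^{jN_d} \to 0$ for every fixed $j$, while Vakil-Wood yields dimensional-filtration bounds; neither alone controls the Hadamard norm, which sees both the multiplicity and the location of Frobenius eigenvalues. The plan is to combine Deligne's Weil bounds on the absolute values of Frobenius eigenvalues on $H^*_c(R_{d,s}, \bbQ_\ell)$ with uniform bounds on the total Betti numbers of $R_{d,s}$ --- the latter obtained from its explicit description as a union of jet-singularity loci indexed by high-degree closed points --- to produce an estimate of the form $\|Z_{[R_{d,s}]\bbL^{-N_d}}\|_H \ll q^{-\varepsilon s}$ with $\varepsilon > 0$ independent of $d$. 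Establishing such a Betti-number bound uniformly in $d$ is the principal obstacle, and we expect it to require a refinement of the motivic Euler product formalism of \cite{bilu-howe} that simultaneously tracks Hadamard-type norms, perhaps in the spirit of their motivic Bertini theorem with Taylor coefficients.
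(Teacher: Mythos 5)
This statement is Conjecture~\ref{conj.zeta-convergence-general}: the paper does not prove it (even the special case Conjecture~\ref{conj.zeta-convergence-Pn} with $n>1$ is stated to be completely open, and the paper explicitly retracts an earlier claimed proof in the case $\dim X=1$). Your proposal is likewise not a proof: the decisive step --- showing $\|Z_{[R_{d,s}]\bbL^{-N_d}}\|_H\to 0$ --- is exactly the obstacle the paper isolates, namely that neither Poonen's point-counting sieve nor the Vakil--Wood dimension bounds control the Hadamard norm of the error term. The route you gesture at (Deligne's weight bounds plus uniform Betti-number bounds) is the one the paper already formalizes in \S\ref{section:hadamard-convergence-cohomo-stab}, Theorem~\ref{theorem:cohom-stab-hadamard}, so the genuinely new content required is precisely the part you leave open.

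There is moreover a concrete error upstream of the acknowledged gap: the sieve identity $[U_d]\bbL^{-N_d}=P_s-[R_{d,s}]\bbL^{-N_d}$ with $P_s=\prod_{\deg x\le s}\left(1-\bbL^{-(\dim X+1)\deg x}\right)$ is false in $\calM_{\bbF_q}$; it holds only after applying the $\bbF_q$-point-counting measure, i.e.\ in the first ghost coordinate. At a closed point $x$ of degree $e\ge 2$ the jet space $H^0\!\left(X,(\calO_X/\frakm_x^2)(d)\right)\cong\bbA^{(\dim X+1)e}_{\bbF_q}$ contains as its non-transversality locus not the origin but a form of a union of $e$ Galois-conjugate linear subspaces of codimension $\dim X+1$, so the class of the transverse-along-$x$ locus is the Weil restriction of $\bbA^{\dim X+1}\setminus\{0\}$ from the residue field, not $\bbL^{(\dim X+1)e}-1$ (already their $\bbF_{q^2}$-counts differ: $(q^{2(\dim X+1)}-1)^2$ versus $q^{4(\dim X+1)}-1$ when $e=2$). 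Consequently the $r$-th ghost coordinate of $Z_{P_s}$ is $\prod_{\deg x\le s}(1-q^{-(\dim X+1)r\deg x})$, whereas $1/_W\,\zeta_X^{\Kap}(\dim X+1)$ has $r$-th ghost $Z_{X_{\bbF_{q^r}}}\!\left(q^{-(\dim X+1)r}\right)^{-1}=\prod_{y\in |X_{\bbF_{q^r}}|}(1-q^{-(\dim X+1)r\deg y})$; these disagree for $r=2$ as soon as $X$ has a closed point of degree $2$. So even granting your error estimate, your main term converges in $\calH_1$ to the wrong Hadamard function; the correct main term is the zeta function of the honest locus of sections transverse along all closed points of degree $\le s$, whose zeta measure is the corresponding truncation of the Witt product defining $1/_W\,\zeta_X^{\Kap}(\dim X+1)$. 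The same conflation of $\bbF_q$-densities with motivic classes undermines your preliminary claim that $Z_X^{\Kap}(\bbL^{-m})^{-1}=\prod_{x\in|X|}(1-\bbL^{-m\deg x})$ in $\widehat{\calM}_{\bbF_q}$.
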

\noindent Here the notation $/_W$ denotes division in the Witt ring. 

We state separately the case $X = \bbP^n$, which has a particularly simple form:
\begin{conjecture}\label{conj.zeta-convergence-Pn}
Let $U_d \subset V_d := \Gamma(\bbP^n, \calO(d))$ be the space of smooth hypersurfaces of degree $d$ in $\bbP^n$. Then,
\[ \lim_{d\rightarrow \infty} Z_{U_d/\bbF_q}\left(q^{-\dim V_d} t\right) = Z_{\mathrm{GL}_{n+1}/\bbF_q}\left(q^{-(n+1)^2}t\right) \]
in the Hadamard topology. In particular, the sequence of rational functions
\[ \frac{ Z_{U_d/\bbF_q}(q^{-\dim V_d} t) }{Z_{\mathrm{GL}_{n+1}/\bbF_q}(q^{-(n+1)^2}t)} \]
converges uniformly on compact sets in $\bbC$ to the constant function $1$.
\end{conjecture}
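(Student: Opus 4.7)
This is the special case $X = \bbP^n$ of Conjecture~\ref{conj.zeta-convergence-general}, provided one first verifies the explicit identity
\[
1/_W \zeta_{\bbP^n}^\Kap(n+1) = Z_{\GL_{n+1}/\bbF_q}\bigl(q^{-(n+1)^2}t\bigr).
\]
Both sides lie in $\calH_1$, so by~(\ref{eqn.zeta-point-counts}) equality reduces to matching $r$-th power sums. Using the Kapranov formula $Z_{\bbP^n}^\Kap(s) = \prod_{i=0}^n(1-\bbL^i s)^{-1}$, the left-hand side is the zeta-measure image of $\prod_{j=1}^{n+1}(1-\bbL^{-j})$, with $r$-th power sum $\prod_{j=1}^{n+1}(1-q^{-rj})$; the right-hand side has $r$-th power sum $|\GL_{n+1}(\bbF_{q^r})|/q^{r(n+1)^2}$, which simplifies to the same expression via $|\GL_{n+1}(\bbF_{q^r})| = \prod_{i=0}^n(q^{r(n+1)}-q^{ri})$.

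Second, the hypotheses of the meta-conjecture~\ref{conj.meta} are in hand: Vakil-Wood's motivic Bertini combined with continuity of the zeta measure from the dimensional topology on $\calM_{\bbF_q}$ to the weight topology on $\calR_1$ yields weight convergence of $Z_{U_d}(q^{-\dim V_d}t)$ to $Z_{\GL_{n+1}/\bbF_q}(q^{-(n+1)^2}t)$, and Poonen's theorem applied over every extension $\bbF_{q^r}$ yields point-counting convergence. To upgrade to Hadamard convergence I would work with the Grothendieck-Lefschetz expansion $Z_{U_d}(q^{-\dim V_d}t) = \sum_{k,i}(-1)^k[\alpha_{k,i}\,q^{-\dim V_d}]$ in the group ring $\bbZ[\bbC^\times]$, where $\alpha_{k,i}$ are the Frobenius eigenvalues on $H^k_c(U_d,\bbQ_\ell)$ and $|\alpha_{k,i}|\leq q^{k/2}$ by Deligne. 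The strategy is to split the sum into a stable range $k\leq k_0(d)$, in which representation stability for étale cohomology of smooth hypersurface complements (Tommasi, and Galois-equivariant refinements due to Das and others) should identify the contribution with the target $\sum_{S\subseteq\{1,\ldots,n+1\}}(-1)^{|S|}[q^{-\sum_{j\in S}j}]$, and an unstable tail bounded by $\sum_{k > k_0(d)}\dim H^k_c(U_d,\bbQ_\ell)\cdot q^{k/2-\dim V_d}$ which must be shown to vanish as $d\to\infty$. The ``in particular'' uniform convergence claim is then automatic: Hadamard convergence of a ratio of rational functions to $1$ forces its zeros and poles to escape every compact set, and controlling $\log$ of the remaining entire factor on compacts by the Hadamard norm yields uniform convergence to the constant $1$.

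The principal obstacle is the unstable tail estimate. Existing stability results give matching of Galois representations in each fixed cohomological degree but provide no quantitative dimension bounds on $H^k_c(U_d,\bbQ_\ell)$ in the unstable range $k > k_0(d)$ strong enough to overcome the factor $q^{k/2-\dim V_d}$, which decays only weakly once $k$ approaches $2\dim V_d$. Achieving such a bound likely requires either Betti number estimates for hypersurface complements beyond what is in the current literature, or a finer weight analysis that improves on Deligne's generic bound for the high-degree cohomology of $U_d$.
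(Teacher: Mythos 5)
The statement you are trying to prove is not a theorem of the paper: it is stated there as a conjecture, explicitly described as ``completely open'' for $n>1$, and the only case the paper settles is $n=1$, where the convergence is trivial because $[U_d]/[V_d]=[\GL_2]\bbL^{-4}$ holds exactly in $\calM_{\bbF_q}$ for all $d\geq 2$, so the normalized zeta functions are eventually constant. Your proposal is therefore a strategy rather than a proof, and you say so yourself: the decisive step --- bounding the contribution of the unstable range of $H^*_c(U_d,\bbQ_\ell)$ well enough to beat the factor $q^{k/2-\dim V_d}$ --- is exactly the missing ingredient that makes the conjecture open. Invoking the meta-conjecture does not close this gap either, since it is itself only a heuristic principle in the paper, not a proved statement. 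Note also the paper's cautionary remark that an earlier claimed proof of the $\dim X=1$ case of the general Bertini conjecture contained a mistake, which underlines that no soft argument is currently known.

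What you do have right, and what is genuinely useful, is the bookkeeping around the statement. Your verification that $1/_W\,\zeta_{\bbP^n}^{\Kap}(n+1)$ equals $Z_{\GL_{n+1}/\bbF_q}\bigl(q^{-(n+1)^2}t\bigr)$ is correct (both correspond to the Witt sum $\sum_{S\subseteq\{1,\dots,n+1\}}(-1)^{|S|}\,[q^{-\sum_{j\in S}j}]$, i.e.\ to the zeta measure of $\prod_{j=1}^{n+1}(1-\bbL^{-j})=[\GL_{n+1}]\bbL^{-(n+1)^2}$), and weight convergence from Vakil--Wood plus point-counting convergence from Poonen over all extensions $\bbF_{q^r}$ is indeed in hand. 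Your proposed upgrade --- weight convergence plus subexponential Betti bounds implies Hadamard convergence --- is precisely the route the paper itself advertises (its Theorem~\ref{theorem:cohom-stab-hadamard} and the discussion of Tommasi's stabilization in \S\ref{subsub:betti-hadamard}), so you have rediscovered the paper's suggested attack, not a proof. To turn this into a proof one would need quantitative bounds of the form $\dim H^k_c(U_d,\bbQ_\ell)\leq C\lambda^k$ with $\lambda<\sqrt q$ uniformly in $d$ (or an equivalent weight refinement), and no such bounds are currently available; absent them, the conjecture remains open and your argument cannot be completed.
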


For $n=1$, Conjecture~\ref{conj.zeta-convergence-Pn} is true, because for all $d \geq 2$
\[ \frac{[U_d]}{[V_d]} = [\mathrm{GL}_2]\bbL^{-4} \in \calM_{\bbF_q}. \]
For $n > 1$, however, already Conjecture~\ref{conj.zeta-convergence-Pn} is completely open. As some partial evidence, we note that Tommasi \cite{tommasi} has established a cohomological stabilization result for moduli of smooth hypersurfaces in $\mathbb{P}^n_{\mathbb{C}}$ --- cf. \S\ref{subsub:betti-hadamard} below for more details on the relation between cohomological stabilization and Hadamard convergence. 

\begin{remark} Some of the material on Hadamard convergence developed in this work appeared already in the first version of \cite{bilu-howe} posted on arXiv. In particular, it was claimed there that Conjecture~\ref{conj.zeta-convergence-general} could be proved in the case that $\dim X =1$. No details were provided, and there was a mistake in the envisioned proof.\end{remark}

The point-counting \cite{poonen:bertini} and motivic \cite{bilu-howe} Bertini theorems with Taylor coefficients furnish many more examples where we expect that the meta-conjecture should apply. The limits appearing in these theorems are special values of (motivic) Euler products, but, unfortunately we are currently unable to prove that these special values are Hadamard functions in any level of generality!

\subsection{Results for zero-cycles} 
Our meta-conjecture was originally motivated by the Bertini examples discussed above, but for now these seem to be out of reach. On the other hand, there are a number of questions about zero-cycles that have been previously studied in both arithmetic and motivic statistics for which we can both formulate and prove concrete instances of the meta-conjecture. In particular, building on \cite{farb-wolfson-wood, vakil-wood, chen:0-cycles, farb-wolfson:etale-stability, howe:mrv1}, we treat various problems involving colored effective zero-cycles with prescribed incidence relations. We also give an application to the motivic Batyrev-Manin conjecture as in \cite{bourqui}. These are the main results of this paper, and the main evidence that our meta-conjecture is reasonable. 

\subsubsection{Pattern-avoiding zero-cycles}

In \S\ref{section:zero-cycles} we carry out a general study of convergence for densities of spaces of effective zero-cycles with prescribed allowable sets of labels. These generalize different densities considered previously in related contexts by Bourqui \cite{bourqui}, Farb-Wolfson-Wood \cite{farb-wolfson-wood}, and Vakil-Wood \cite{vakil-wood}. We establish fairly complete weight and point-counting convergence results, and find natural examples (and non-examples) of Hadamard convergence. For more details, we refer the reader to the beginning of \S\ref{section:zero-cycles}; below we only highlight some examples.

\subsubsection{Orthogonal pattern-avoiding zero-cycles.}
\label{subsub.intro-pattern-avoiding}
Let $X/\bbF_q$ be a quasi-projective variety and $k\geq 1$ an integer. For $\ul{d}=(d_1, \ldots, d_k)\in \bbZ^k_{\geq 0}$, we write
\[ \Sym^{\ul{d}}X := \Sym^{d_1} X \times \Sym^{d_2} X \times \cdots \times \Sym^{d_k} X. \]
For $K/\bbF_q$ algebraically closed, we can view a point $s \in \Sym^{\ul{d}}(K)$ as a tuple $(s_1, \ldots, s_k)$ of finitely supported functions on $X(K)$ with values in $\bbZ_{\geq 0}$. In particular, for each $x \in X(K)$, we obtain a label vector $\ell_s(x):=(s_1(x), \ldots, s_k(x)) \in \bbZ_{\geq 0}^k$. If we fix a finite subset $V \subset \bbZ_{\geq 0}^k$, then we can consider the locus
\[ \calZ^{\ul{d}}_V(X) \subset \Sym^{\ul{d}} X \]
whose $K$-points for algebraically closed $K$ are exactly those $s$ such that, for all $\vec{v} \in V$ and $x \in X(K)$, $\ell_s(x) \not \geq \vec{v}$ (i.e., $\ell_s$ avoids all of the patterns in $V$).

\begin{example}\label{example:intro-farb-wolfson-wood}
If $V = \{ (n, n, \ldots, n) \}$, then $\calZ_V^{\ul{d}}(X)$ is the subvariety denoted $\calZ_n^{\ul{d}}(X)$ in \cite{farb-wolfson-wood}, which parameterizes tuples of effective zero cycles whose overlap has multiplicity bounded by $n$.  In particular, $\calZ_{ \{ (2) \} }^{(d)}(X) = \Conf^ d X,$ the configuration space of $d$ unordered distinct points on $X$. 
\end{example}

A set of vectors $V$ is orthogonal (for the standard inner product) if and only if for each $1 \leq i \leq k$, there is at most one vector $\vec{v} \in V$ with non-zero $i$th component. For $\vec{v}=(v_1, \ldots, v_k) \in \bbZ_{\geq 0}^k$, we write $|\vec{v}|=v_1 + \ldots + v_k$. We say that a set of vectors $V$ is non-degenerate if it does not contain a $\vec{v}$ with $|\vec{v}|\leq 1$ (i.e. it does not contain the zero vector or the unit vector $e_i$ for any $i$). 

\begin{maintheorem}\label{theorem:pattern-avoiding-0-cycles} If $V$ is orthogonal and non-degenerate, then, in the Hadamard topology on $\calH_1$,
 \[ \lim_{d_1, d_2, \ldots, d_k \rightarrow \infty} Z_{\calZ_V^{\vec{d}}(X)}(t) /_W Z_{\Sym^{\vec{d}}(X)}(t) = 1 /_W \left( \prod_{\vec{v}\in V} \zeta_X^{\Kap}\left(|\vec{v}|\cdot \dim X\right) \right). \]
Note that all ring operations in this equation are taken in the Witt ring structure.
\end{maintheorem}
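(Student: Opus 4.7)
The plan is first to use the orthogonality of $V$ to reduce to the case of a single non-degenerate pattern $\vec{v}$, and then to derive a functional equation for the ratio $L_{\vec{d}} := Z_{\calZ_{\{\vec{v}\}}^{\vec{d}}X}(t)/_W Z_{\Sym^{\vec{d}}X}(t)$ via a generating-function identity in the modified Grothendieck ring. Solving this functional equation in the Hadamard limit, using the Hadamard-convergent behavior of symmetric-product densities, gives the stated value.

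For the orthogonal reduction, write $V = \{\vec{v}^{(1)}, \ldots, \vec{v}^{(m)}\}$ with pairwise disjoint supports $S_j := \{i : v^{(j)}_i > 0\}$ and set $S_0 := \{1, \ldots, k\} \setminus \bigsqcup_j S_j$. The conditions defining $\calZ_V^{\vec{d}}(X)$ decouple across the $S_j$ and are vacuous on $S_0$, yielding an isomorphism of varieties
\[
\calZ_V^{\vec{d}}(X) \cong \prod_{j} \calZ_{\{\vec{v}^{(j)}|_{S_j}\}}^{\vec{d}|_{S_j}}(X) \times \prod_{i \in S_0}\Sym^{d_i}(X).
\]
Since the zeta measure takes varietal products to Witt $*_W$-products, the ratio $Z_{\calZ_V^{\vec{d}}X}(t) /_W Z_{\Sym^{\vec{d}}X}(t)$ factors as a $*_W$-product of analogous single-pattern ratios (the $S_0$-factors cancel). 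Both Witt operations are Hadamard-continuous (indeed $\|f *_W g\|_H \leq \|f\|_H \cdot \|g\|_H$), so it suffices to handle $V = \{\vec{v}\}$ for a single non-degenerate vector.

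In the single-pattern case, a labeled zero-cycle $s \in \Sym^{\vec{d}} X$ over an algebraic closure decomposes uniquely as $s = n\vec{v} + r$, where $n(x) := \min_{v_i > 0}\lfloor s_i(x)/v_i\rfloor$ defines $n \in \Sym^e X$ for some $e \geq 0$ and $r := s - n\vec{v} \in \calZ_{\{\vec{v}\}}^{\vec{d}-e\vec{v}}(X)$. This bijection on geometric points yields the Grothendieck-ring identity $[\Sym^{\vec{d}} X] = \sum_{e \geq 0}[\Sym^e X] \cdot [\calZ_{\{\vec{v}\}}^{\vec{d}-e\vec{v}}X]$. Applying the zeta measure and $*_W$-dividing by $Z_{\Sym^{\vec{d}} X}(t)$ gives the functional equation
\[
\tfrac{1}{1-t} = L_{\vec{d}} \cdot \prod_{e \geq 1} \Bigl( Z_{\Sym^e X}(t) *_W \tfrac{Z_{\Sym^{\vec{d}-e\vec{v}}X}(t)}{Z_{\Sym^{\vec{d}}X}(t)} *_W L_{\vec{d}-e\vec{v}} \Bigr),
\]
where $\cdot$ and $\prod$ denote ordinary multiplication of rational functions (equal to $+_W$ in the Witt ring), and $1/(1-t) = Z_{\Sym^0 X}(t)$ is the $*_W$-identity. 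As $\vec{d} \to \infty$, the ratio $Z_{\Sym^{\vec{d}-e\vec{v}}X}(t)/_W Z_{\Sym^{\vec{d}}X}(t)$ converges in the Hadamard topology to $1/(1-q^{-e|\vec{v}|\dim X}t)$, the Hadamard realization of the motivic stabilization $[\Sym^{\vec{d}-e\vec{v}}X]/[\Sym^{\vec{d}}X] \to \bbL^{-e|\vec{v}|\dim X}$ in $\widehat{\calM}_{\bbF_q}$; this is a Hadamard-convergence statement for symmetric-product densities of the sort established elsewhere in the paper, and its validity uses $|\vec{v}|\dim X > \dim X$ (via non-degeneracy $|\vec{v}|\geq 2$), which is also what makes $\zeta_X^\Kap(|\vec{v}|\dim X)$ Hadamard-convergent. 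Passing term-by-term to the limit, the putative limit $L := \lim L_{\vec{d}}$ satisfies
\[
\tfrac{1}{1-t} = L *_W \prod_{e \geq 0} Z_{\Sym^e X}(q^{-e|\vec{v}|\dim X}t) = L *_W \zeta_X^\Kap(|\vec{v}|\dim X),
\]
whence $L = 1 /_W \zeta_X^\Kap(|\vec{v}|\dim X)$, as desired.

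The main obstacle is rigorously justifying the term-by-term passage to the limit, which requires uniform-in-$\vec{d}$ Hadamard tail bounds on $\prod_{e \geq E}\bigl( Z_{\Sym^e X}(t) *_W (Z_{\Sym^{\vec{d}-e\vec{v}}X}(t)/_W Z_{\Sym^{\vec{d}}X}(t)) *_W L_{\vec{d}-e\vec{v}}\bigr)$. These should arise from Weil-type bounds on Frobenius eigenvalues of $\Sym^e X$ (growing like $q^{e\dim X/2}$) combined with the decay provided by the shift $q^{-e|\vec{v}|\dim X}$; the non-degeneracy hypothesis $|\vec{v}| \geq 2$ is precisely what makes the net decay strictly negative in the Hadamard norm. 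Establishing uniform control of $L_{\vec{d}-e\vec{v}}$ itself is the other delicate point---this likely proceeds either by bootstrapping or by first obtaining the statement in the coarser weight and point-counting topologies and then promoting to Hadamard using the aforementioned uniform bounds.
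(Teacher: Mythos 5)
Your reduction to a single pattern and your combinatorial decomposition $[\Sym^{\vec{d}}X]=\sum_{e\geq 0}[\Sym^e X]\,[\calZ_{\{\vec{v}\}}^{\vec{d}-e\vec{v}}X]$ are both correct, and they identify the right limiting value; indeed, summing that identity against $\bt^{\vec{d}}$ is exactly the generating-function identity the paper uses, namely $\sum_{\vec{d}}[\calZ_{\{\vec v\}}^{\vec{d}}X]\,\bt^{\vec{d}}=Z_X^{\Kap}(t_1)\cdots Z_X^{\Kap}(t_k)\big/Z_X^{\Kap}(\bt^{\vec{v}})$. But your proof has a genuine gap at precisely the point where the theorem has its content: the Hadamard convergence. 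In your functional-equation step you only compute what the limit $L$ must be \emph{assuming} it exists (``the putative limit $L$''), and the term-by-term passage to the limit is deferred to unproved ``uniform-in-$\vec d$ Hadamard tail bounds'' together with ``uniform control of $L_{\vec d-e\vec v}$.'' Neither is supplied, and the second is circular as stated: the factors of your product involve the very quantities whose convergence (and even uniform boundedness in Hadamard norm) is the thing to be proved, the number of factors grows with $\vec d$, and for small $\vec{d}-e\vec{v}$ the element $Z_{\Sym^{\vec{d}-e\vec{v}}X}(t)$ need not be a unit of $\calW$, so the rewriting of each term through $L_{\vec{d}-e\vec{v}}$ is not even well defined at finite level without further argument.

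The paper's proof shows how to avoid all of this. Rather than fixing $\vec d$ and dividing, one divides at the level of generating series: for orthogonal $V$ one has the polynomial identity $(1-t_1)\cdots(1-t_k)\bigl(1+\sum_{\vec a\in A(V)}\bt^{\vec a}\bigr)=\prod_{\vec v\in V}(1-\bt^{\vec v})$, so the normalized series is the finite product $\prod_{\vec v\in V}Z_X^{\Kap}(\bt^{\vec v})^{-1}$, and its \emph{absolute} convergence at $t_1=\cdots=t_k=[q^{-\dim X}]$ in the Hadamard norm is immediate from Proposition~\ref{prop:kapranov-zeta-rational} (Deligne's bounds) because $|\vec v|\geq 2$ produces geometric decay; then the general division criterion (Theorem~\ref{theorem:convergence-criterion}, via weak rationality) converts absolute convergence of the two series into convergence of the ratio of partial sums, proving existence and value of the limit in one stroke, with no recursion and no uniform control of unknown quantities. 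If you want to salvage your route, the quickest repair is to recognize your decomposition as the generating-function identity above and then run this absolute-convergence-plus-division argument instead of taking limits inside a $\vec d$-dependent product.
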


\begin{remark}
If $V$ contains $0$ then $\calZ_V^{\ul{d}}(X)=\emptyset$ for any $\vec{d}$, and if $V$ contains $e_i$ then $\calZ_V^{\vec{d}}(X)=\emptyset$ when $d_i \neq 0$. For $V$ not orthogonal, see Section~\ref{subsub:mobius-functions}, particularly Theorem~\ref{theorem:convergence-pattern-avoiding}.
\end{remark}

This theorem, and our other related results, provide a motivic lift of Theorem 1.9-2 of Farb-Wolfson-Wood \cite{farb-wolfson-wood}, which describes the same phenomenon at the level of Hodge-Deligne polynomials in the special case of Example~\ref{example:intro-farb-wolfson-wood}.  This confirms the expectation of a motivic analog stated in \cite[paragraph following Theorem 1.9]{farb-wolfson-wood}. Our proof is based on a simple identity of generating functions, generalizing the argument given by Vakil-Wood \cite{vakil-wood} for computing the density of $\Conf^n X$ in $\Sym^n X$. In particular, this provides a shorter\footnote{Of course, our technique does not say anything about the Leray spectral sequence analyzed in loc. cit., and thus cannot establish any of the purely topological density results.} proof of \cite[Theorem 1.9-2]{farb-wolfson-wood}. 

\begin{remark} Ho \cite{ho:densities} has reinterpreted and extended the results of \cite{farb-wolfson-wood} using factorization cohomology. In particular, he constructs a natural rational homotopy type (a commutative dga computing the cohomology) attached to the density, then in \cite[Proposition 7.7.7]{ho:densities} obtains a simple explicit description from which one can deduce the connection with zeta values after taking the trace of Frobenius. Instead taking the characteristic power series of Frobenius, we recover the Hadamard function appearing above; thus this rational homotopy type has a meromorphic zeta function.  It would be interesting to understand this phenomenon more generally!  
\end{remark}

\subsubsection{A non-example of Hadamard convergence}\label{subsub:non-example}
We also study the density of the $k$-colored configuration spaces $C^{\vec{d}}X$ in $\Sym^{\vec{d}}X$ where $\vec{d}\in \bbZ_{\geq 0}^k$. This density converges as $\vec{d} \rightarrow \infty$ in the weight and point-counting topologies, and in Theorem~\ref{theorem:finite-label-set} we show it converges in the Hadamard topology if $k < q^{\dim X}$. Some condition of this form appears to be necessary: for $k=2$, $q=2$, and $X=\bbA^1$, we have computed the limiting formal divisor to high precision, and the result strongly suggests that the limit is not a Hadamard function --- cf. Remark~\ref{remark:hadamard-non-convergence}. 

\subsubsection{Labeled configuration spaces}
We also show Hadamard stabilization for labeled configuration spaces over unordered configuration spaces as studied in \cite{howe:mrv1} in the motivic setting and \cite{chen:0-cycles} in the point-counting setting. This does not fit into the framework of allowable labels described above, but instead admits a natural interpretation as computing the moments of a motivic random variable over unordered configuration space. Concretely, we show:
\begin{maintheorem}\label{theorem:labelled-configuration-spaces}
Let $\lambda$ be a partition and $X/\bbF_q$ a quasi-projective variety. Then, in the Hadamard topology on $\calH_1$,
\[ \lim_{d \rightarrow \infty} Z_{\Conf^{\lambda \cdot \star^d} (X)}(t) /_W Z_{\Conf^{|\lambda| + d} (X)}(t) = Z_{\Conf^\lambda_X \left( \frac{1}{1 + \bbL^{\dim X}} \right) }(t). \]
Here the right-hand-side is the zeta function of a very general notion of labeled configuration space, where the ``space" of labels at each point is the class $\frac{1}{1+\bbL^{\dim X}}$.
\end{maintheorem}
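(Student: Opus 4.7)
The plan is to reduce Theorem~\ref{theorem:labelled-configuration-spaces} to a generating function identity in the rational Witt ring $\calR_1$, and then to upgrade the already-known weight and point-counting convergences to Hadamard convergence using the meromorphy results for $\zeta_X^{\Kap}$ established earlier in the paper.

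First I would derive a motivic formula for the ratio $[\Conf^{\lambda \cdot \star^d}(X)]/[\Conf^{|\lambda|+d}(X)]$ as a specific element of $\calM_{\bbF_q}$ depending on $d$. Geometrically, a point of $\Conf^{\lambda \cdot \star^d}(X)$ is a point of $\Conf^\lambda(X)$ together with $d$ additional unlabeled points, all pairwise distinct from each other and from the labeled ones; combining this decomposition with the Vakil--Wood inclusion-exclusion relating $\Conf^n$ and $\Sym^n$ lets one isolate the numerator generating function and express it in closed form in terms of Kapranov zeta values of $X$ evaluated at shifts by powers of $\bbL^{-\dim X}$. Applying the zeta measure coefficient-wise then turns this into a Witt-multiplicative identity in $\calR_1$.

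Next I would take the limit $d \to \infty$ formally. Expanding the factor $\frac{1}{1+\bbL^{\dim X}}$ on the right-hand side as a geometric series in $-\bbL^{\dim X}$ produces an alternating sum indexed by labelings of the $\ell(\lambda)$ labeled points, which, via the combinatorial manipulations underlying the motivic random variables framework of \cite{howe:mrv1}, is precisely the formal resummation of the $d$-fold partial products. This identifies the formal limit with $Z_{\Conf^\lambda_X(1/(1+\bbL^{\dim X}))}(t)$.

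The main obstacle is upgrading this formal identity to genuine convergence in the Hadamard topology. Weight convergence and point-counting convergence are already available from \cite{howe:mrv1} and \cite{chen:0-cycles} respectively. For the Hadamard refinement, each factor in the limiting product has the form $\zeta^{\Kap}_X(m)$ for some $m \geq \dim X + 1$, and the Hadamard convergence of such products (and their meromorphy as elements of $\calH_1$) was established in the discussion preceding Conjecture~\ref{conj.zeta-convergence-general}. What remains is to show that the tails of the partial products corresponding to finite $d$ approach $1$ uniformly in the Hadamard norm; this should follow from a tail estimate exploiting the geometric decay contributed by powers of $\bbL^{-\dim X}$, after which the identification of the limit with $Z_{\Conf^\lambda_X(1/(1+\bbL^{\dim X}))}(t)$ is immediate from the preceding step.
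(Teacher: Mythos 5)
Your outline breaks down at the point where you claim the generating function (and hence the limit) can be put ``in closed form in terms of Kapranov zeta values of $X$,'' so that ``each factor in the limiting product has the form $\zeta^{\Kap}_X(m)$.'' That is not what the limit is. The limiting class is $C^\lambda_X\bigl(\tfrac{1}{1+\bbL^{\dim X}}\bigr)$, a labeled configuration space whose labels are the \emph{class} $\tfrac{1}{1+\bbL^{\dim X}}$; expanding the geometric series, it is an infinite alternating combination of classes $\bbL^{-m}[\Conf^{\mu}X]$ over refinements $\mu$ of $\lambda$, not a finite Witt-product of special values $\zeta^{\Kap}_X(m)$. (Only the denominator-type factor $(1-\bbL^{\dim X}t)Z^{\Kap}_X(t)/Z^{\Kap}_X(t^2)$ has that shape.) Consequently the Hadamard meromorphy you invoke from the discussion preceding Conjecture~\ref{conj.zeta-convergence-general} does not apply, and your final ``tail estimate'' is exactly the missing content: without further input it is not even clear that the right-hand side $Z_{\Conf^\lambda_X(1/(1+\bbL^{\dim X}))}(t)$ defines an element of $\calH_1$, because there are no general bounds on Hadamard norms of labeled configuration spaces with class labels.

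The paper's argument supplies precisely these two missing ingredients. First, the ratio of generating series is identified \emph{exactly}, not just asymptotically, as a configuration space with power-series labels: the numerator is a motivic Euler product of $(1+t)$ over the complement of the universal configuration $\bc_\lambda$ inside $X\times \Conf^\lambda X$, and dividing by the $\Conf^d$ series leaves the Euler product of $\tfrac{1}{1+t}$ over $\bc_\lambda/\Conf^\lambda X$, which Proposition~\ref{prop:power-series-conf-space} identifies with $C^\lambda_X\bigl(\tfrac{1}{1+t}\bigr)$. Second, the evaluation at $t=q^{-\dim X}$ in the Hadamard topology rests on an explicit continuity estimate for $a\mapsto C^\lambda_X(a)_\phi$ restricted to $\bbZ[\bbL^{\pm 1}]$ (Lemma~\ref{lemma.continuity-configuration-spaces}, using the symmetric-power addition formula and $\Sym^k_X(\bbL^\ell)=\bbL^{k\ell}$), upgraded to absolute convergence of $C^\lambda_X(f(s))$ at $s=\bbL_\phi^{-\dim X}$ in Lemma~\ref{lemma:values_conf_series}; combined with weak rationality (Proposition~\ref{prop:kapranov-zeta-rational}) and the partial-sum trick of Theorem~\ref{theorem:convergence-criterion}, this yields both the convergence and the identification of the limit. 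If you want to salvage your route, you would need to replace the claimed product formula by this relative Euler product identity and prove a quantitative Hadamard bound playing the role of Lemma~\ref{lemma.continuity-configuration-spaces}; as written, the proposal asserts the hard step rather than proving it.
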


\begin{remark}\label{remark:farb-wolfson}
 	The explicit computation of the limit in Theorem~\ref{theorem:labelled-configuration-spaces} is new (though closely related to \cite[Corollary B]{howe:mrv1}), and makes precise the statement that, after passing to the zeta measure, the universal family over $C^d X$ is asymptotically a motivic binomial random variable with parameters $N= X$ and $p=\frac{1}{1+\bbL^{\dim X}}$. Convergence in the Hadamard topology, without the explicit computation of the limit, can also be deduced (under a lifting hypothesis) from the \'{e}tale homological stability results of Farb-Wolfson \cite{farb-wolfson:etale-stability} (cf. also \S\ref{subsub:betti-hadamard}). 
\end{remark}

\begin{remark}
 Following the strategy used in \cite{howe:mrv1} to relate motivic stablization of labeled configuration spaces and representation stability, one obtains the following consequence of Theorem~\ref{theorem:labelled-configuration-spaces}: given a Young diagram $\lambda$, the theory of representation stability attaches a natural sequence of locally constant $\ell$-adic sheaves $\mathcal{V}_{\lambda,d}$ on $C^d X$ for $d$ sufficiently large. Writing $L_{\lambda,d}(t)$ for the $L$-function of $\mathcal{V}_{\lambda,d}$, we find that the sequence $L_{\lambda,d}(t q^{-d \dim X})$ converges in the Hadamard topology. \end{remark}

\subsection{Batyrev-Manin over function fields}

Let $K$ be a field and $X$ a split toric variety over $K$, which is assumed smooth and projective. Let $U$ be its open orbit. For every integer $d\geq 0$, we denote by $[U_{0,d}]$ the quasi-projective variety parameterizing $K$-morphisms $\bbP^1_K \to X$ with image intersecting $U$, and of anticanonical degree $d$. Let $\rho$ be the rank of the Picard group of $X$. We are interested in the \textit{motivic height zeta function}
$$Z(T) = \sum_{d\geq 0}  [U_{0,d}] T^d.$$
In the finite field case, the specialization via point counting of $Z(T)$ has been extensively studied by Bourqui \cite{bourqui03,bourquiAMS}, in a much more general setting (for morphisms from a curve of arbitrary genus to not necessarily split toric varieties). In \cite{bourqui}, Bourqui also adressed the motivic problem over an arbitrary $K$. Combining his method therein with our results, we show:

\begin{maintheorem} \label{maintheorem:bourqui} \hfill
\begin{enumerate} \item There exists an integer $a\geq 1$ and a real number $\delta >0$ such that the series
\begin{equation}\label{eqn:motivicheightzetafunction}(1-(\LL T)^a)^\rho \left(\sum_{d\geq 0} [U_{0,d}] T^d\right)\end{equation}
converges for $|T| < \LL^{-1+\delta}$ in the dimensional topology. Its value at $\LL^{-1}$ is non-zero and can be described explicitly by the special value of a motivic Euler product. 
\item 
Assume now $K = \bbF_q$ finite. Then the specialization of (\ref{eqn:motivicheightzetafunction}) via the zeta measure converges in the point counting topology. If $q$ is larger than some explicit bound, it converges in the Hadamard topology.
\end{enumerate}
\end{maintheorem}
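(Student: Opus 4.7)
The plan is to combine Bourqui's Cox-coordinate approach to the motivic height zeta function \cite{bourqui} with the Hadamard convergence machinery developed earlier in this paper for zero-cycles. Write $n$ for the number of rays of the fan of $X$ and $\mathrm{Cl}\colon \bbZ^n \twoheadrightarrow \mathrm{Pic}(X)$ for the degree map; because $X$ is split smooth projective toric of rank $\rho$, Cox's construction presents the space of maps $\bbP^1 \to X$ of anticanonical degree $d$ meeting $U$ as a torus quotient of an open subvariety $V_{\vec{d}} \subset \prod_{\alpha=1}^n \bbA^{d_\alpha+1}$, where $\vec{d}=(d_\alpha)$ ranges over non-negative integer tuples with prescribed image in $\mathrm{Pic}(X)$ determined by $d$, and the open condition requires that for each maximal cone $\sigma$ of the fan the polynomials indexed by the rays in $\sigma$ have no common root in $\bbA^1$.

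For part (1), I would apply motivic M\"obius inversion---essentially the argument used in the proof of Theorem~\ref{theorem:pattern-avoiding-0-cycles}, which in its simplest form goes back to the computation of the density of $\Conf^d X \subset \Sym^d X$ in \cite{vakil-wood}---to the coprimality conditions imposed by the maximal cones. This rewrites $\sum_d [V_{\vec{d}}] T^d$ as a sum, indexed by effective zero-cycles on $\bbP^1$ labeled by the rays of the fan and subject to orthogonal pattern-avoidance, of products of Kapranov zeta values of affine spaces evaluated at monomials in $\bbL$ and $T$. Each factor contributes a pole at $T=\bbL^{-1}$, and collecting these according to the lattice $\mathrm{Pic}(X)$ produces the polar factor $(1-(\bbL T)^a)^\rho$, where $a$ is an explicit denominator coming from the relationship between $\mathrm{Cl}$ and the anticanonical class. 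After removing the polar part, the remaining series converges absolutely in the dimensional topology for $|T|<\bbL^{-1+\delta}$ and takes the form of a motivic Euler product over closed points of $\bbP^1$; non-vanishing at $T=\bbL^{-1}$ follows from each local factor having the shape $1+O(\bbL^{-1})$. Most of this is essentially in \cite{bourqui}; the additional work is to re-express it in a form directly amenable to zeta specialization.

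For part (2), applying the zeta measure coefficientwise converts each local factor into a rational function of the form $\prod_i (1-q^{m_i}t)^{\pm 1}$, so point-counting convergence reduces to the classical convergence result of \cite{bourqui03,bourquiAMS}; alternatively, it follows directly from dimensional convergence in part (1) via the explicit Euler-product shape of the limit. For Hadamard convergence one estimates $\|\cdot\|_H$ on each specialized local Euler factor: after renormalization and evaluation near $T=\bbL^{-1}$, the contribution of a closed point $x\in \bbP^1_{\bbF_q}$ admits a Hadamard norm bound of the form $1+C\,q^{-(\deg x)(1+\eta)}$ for some $\eta>0$ depending on $\delta$ and the combinatorics of the fan. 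Summing over closed points of $\bbP^1_{\bbF_q}$ then yields convergence as soon as $\sum_{m\geq 1}(q^m+1)\,q^{-m(1+\eta)}$ converges with enough room to absorb the polynomial proliferation from the maximal cones, which happens precisely when $q$ exceeds an explicit constant depending on $n$, $\rho$, and the torsion in $\mathrm{Pic}(X)$.

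The main obstacle will be obtaining a sufficiently tight Hadamard norm bound on the local Euler factors. Unlike the weight or point-counting norms, $\|\cdot\|_H$ sees the total number of zeros and poles with multiplicity, so one must track both the exponents and the multiplicities appearing in each local factor of the regularized Euler product and verify that they do not proliferate faster than the geometric decay $q^{-(1+\eta)\deg x}$ can absorb. This is the same phenomenon responsible for the hypothesis $k<q^{\dim X}$ in Theorem~\ref{theorem:finite-label-set} and for the non-example of \S\ref{subsub:non-example}, and the explicit lower bound on $q$ in Theorem~\ref{maintheorem:bourqui}(2) should emerge from the analogous comparison.
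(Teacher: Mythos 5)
Your high-level skeleton (Cox/universal-torsor presentation, motivic M\"obius inversion over the coprimality conditions, identification of the limit as a special value of a motivic Euler product, then convergence estimates in the three topologies) is indeed the route the paper takes, following \cite{bourqui}. But two of your key steps do not hold as stated. First, the pattern-avoidance imposed by the fan is \emph{not} orthogonal in general: the forbidden patterns are the indicator vectors of the primitive collections of $\Sigma$, and these can share rays (already for the del Pezzo surface of degree $6$, the pairs $\{e_1,-e_1\}$, $\{e_1,e_2\}$, $\{e_1,-e_1-e_2\}$ all contain $e_1$). So the generating series is not a finite product of Kapranov zeta values as in Theorem~\ref{theorem:pattern-avoiding-0-cycles} and \S\ref{sect:orthogonal_patterns}; one needs the non-orthogonal M\"obius formalism of \S\ref{subsub:mobius-functions} and the estimate of Lemma~\ref{lemma:convergence_poly_nolinearterms}, and this is precisely where the explicit lower bound $q>M_{\Sigma}^{1/e_{\Sigma}}$ in part (2) comes from --- were your orthogonality claim correct, Hadamard convergence would hold for every $q$, which is not what the theorem asserts. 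Relatedly, your proposed Hadamard bound ``$1+Cq^{-(\deg x)(1+\eta)}$ per local Euler factor, summed over closed points'' is not an argument one can run: motivic Euler products are defined as series, and their specializations are controlled coefficientwise via the pre-$\lambda$ power formula (Lemmas~\ref{lemma:pre-lambda-power-formula}, \ref{lemma:power-sum-estimates}, \ref{lemma:convergence_poly_nolinearterms}), not by norm estimates on literal local factors. Also, point-counting convergence does not ``follow directly from dimensional convergence'': the point-counting measure is not continuous for the dimensional topology, which is the whole raison d'\^etre of this paper; it has to be proved by the same coefficient estimates (or quoted from \cite{bourqui03}).

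Second, the genuinely hard part of the proof is the one you compress into ``collecting these according to the lattice $\mathrm{Pic}(X)$ produces the polar factor $(1-(\LL T)^a)^\rho$.'' The height zeta function is a sum over the fibers of $\bbZ^{\Sigma(1)}\to\mathrm{Pic}(X_\Sigma)$, and handling it requires Bourqui's auxiliary fan $\Delta$ on the dual of the effective cone, the decomposition into terms $Z_{A,\delta,J,\ul{e}}(T)$, the rational factors $C_\delta(T)=\prod_{i\in I(\delta)}\bigl(\tfrac{1}{1-(\LL T)^{\langle m_i,\calL_0\rangle}}-1\bigr)$ whose pole locations define $a=\mathrm{lcm}\{\langle m_i,\calL_0\rangle\}$, and --- crucially --- the truncated cone sums $Q_{\delta,\ul{e}}$, $R_{\delta,\ul{e}}$, which must be bounded \emph{uniformly in the M\"obius index $\ul{e}$} (polynomially many terms, pairings $\langle y,\calL_0\rangle\leq C|\ul{e}|$, from \cite{bourqui03}) in order to interchange the M\"obius-weighted sum $\sum_{\ul{e}}\mu_\Sigma(\ul{e})(\cdots)$ with evaluation near $T=\LL^{-1}$ in each topology; this is the content of Propositions~\ref{prop:A-empty} and~\ref{prop:A-nonempty}. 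Without these uniform estimates your interchange of limits, and hence the convergence for $|T|<\LL^{-1+\delta}$ and the identification and non-vanishing of the value at $\LL^{-1}$, is unsupported. So the proposal needs both the non-orthogonal convergence criterion (Theorem~\ref{theorem:convergence-pattern-avoiding}) and a quantitative analysis of the lattice-point sums before it becomes a proof.
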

We refer to \S\ref{section:bourqui} and in particular to Theorem~\ref{theorem:precisebourqui} for a more precise version with explicit bounds and values. The result in the dimensional topology is obtained simply by substituting the more versatile notion of motivic Euler product from \cite{bilu:motiviceulerproducts} for the one used by Bourqui in \cite{bourqui}. The point counting convergence was already known in greater generality (for curves of any genus) by \cite{bourqui03}. The Hadamard convergence is an application of the results of the section on zero-cycles. 

This problem is an instance of the function field Batyrev-Manin conjecture (classically, the Batyrev-Manin conjecture deals with counting points of bounded height on algebraic varieties defined over number fields). As far as the authors are aware, Theorem~\ref{maintheorem:bourqui} is the first instance in the literature giving a unified treatment of a case of the function-field Batyrev-Manin problem in the point counting and motivic setting outside of situations where the motivic height zeta function is rational.

We record a consequence of Theorem~\ref{maintheorem:bourqui} on the asymptotics of the spaces $U_{0,d}$: 

\begin{corollary}\label{corollary:bourqui} For every $p\in \{0,\ldots, a-1\}$ one of the following cases occur when $d$ goes to infinity in the congruence class of $p$ modulo $a$:
\begin{itemize} \item Either $\limsup \frac{\dim U_{0,d}}{d} < 1$,
\item or $\dim U_{0,d} - d$ has  a finite limit. In this case, fixing a separable closure $K^s$ of the base field $K$ and denoting by $\kappa(U_{0,d})$ the number of irreducible components of maximal dimension of $U_{0,d}$ over $K^s$, we have that
$$\frac{\log \kappa(U_{0,d})}{\log(d)}$$
converges to an element of the set $\{0,\ldots,\rho - 1\}$. 
\end{itemize} 
Moreover, the second case happens for at least one value of $p\in \{0,\ldots,a-1\}$. 
\end{corollary}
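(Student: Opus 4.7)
The plan is to invert the regularization in Theorem~\ref{maintheorem:bourqui} to extract polynomial-in-$N$ asymptotics for $[U_{0,d}]$ when $d = p + Na$ with $p$ fixed and $N \to \infty$. Define $c_d \in \calM_K$ by
\[ (1-(\LL T)^a)^\rho \sum_{d\geq 0} [U_{0,d}] T^d = \sum_{d\geq 0} c_d T^d. \]
The assumed convergence of the right-hand side for $|T| < \LL^{-1+\delta}$ translates to a dimensional bound $\dim c_d \leq (1-\delta_0) d + O(1)$ for some $\delta_0 > 0$. Expanding $(1-(\LL T)^a)^{-\rho}$ as a binomial series yields, for $d = p+Na$,
\[ [U_{0,d}] = \sum_{m=0}^{N} \binom{N-m+\rho-1}{\rho-1} \LL^{(N-m)a} c_{p+ma}. \]

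The binomial coefficient is a polynomial in $N$ of degree $\rho-1$, and the dimensional bound on $c_d$ ensures that extending the summation to $m < \infty$ introduces an error $\epsilon_{p,N}$ whose dimension tends to $-\infty$ linearly in $N$, even after multiplication by the polynomial factors $N^k$. Therefore
\[ [U_{0,d}]\LL^{-d} = P_p(N) + \epsilon_{p,N}, \qquad P_p(N) = \sum_{k=0}^{\rho-1} N^k \alpha_k(p), \]
with $\alpha_k(p) \in \widehat{\calM}_K$ explicit, the top coefficient being $\alpha_{\rho-1}(p) = \tfrac{1}{(\rho-1)!} f_p$ for $f_p := \sum_{d \equiv p\ (a)} c_d\LL^{-d}$. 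By construction $\sum_p f_p$ equals the nonzero value of the regularized series at $T = \LL^{-1}$.

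Now fix $p$. If $P_p \equiv 0$ then $[U_{0,d}] = \LL^d\epsilon_{p,N}$, whose dimension is at most $d - \delta_0 a N + O(1)$, forcing $\limsup \dim U_{0,d}/d \leq 1-\delta_0 < 1$, the first alternative. Otherwise, set $D = \max\{\dim \alpha_k(p) : \alpha_k(p) \neq 0\}$ and $k^* = \max\{k : \dim \alpha_k(p) = D\}$. The image $\bar P_p(N)$ of $P_p(N)$ in the graded piece $\mathrm{Fil}_D\calM_K/\mathrm{Fil}_{D-1}\calM_K$ is a polynomial in $N$ with nonzero leading coefficient $\bar\alpha_{k^*}$, hence nonzero of leading order $N^{k^*}\bar\alpha_{k^*}$ for large $N$; thus $\dim P_p(N) = D$ and $\dim U_{0,d} - d = D$ for $d \equiv p\ (a)$ sufficiently large, the second alternative with finite limit $D$. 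Applying a geometric ``top-dimensional component count'' functional $\kappa$ on $\mathrm{Fil}_\bullet\calM_K/\mathrm{Fil}_{\bullet-1}\calM_K$ to the leading class gives $\kappa(U_{0,d}) = N^{k^*}\kappa(\bar\alpha_{k^*}) + O(N^{k^*-1})$, so $\log \kappa(U_{0,d})/\log d \to k^* \in \{0,\ldots,\rho-1\}$. Finally, if every $p$ were in the first case then every $\alpha_{\rho-1}(p)$ and hence every $f_p$ would vanish, contradicting the nonvanishing of their sum.

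The main obstacle is formalizing the ``top-dimensional component count'' functional $\kappa$ on $\mathrm{Fil}_D\calM_K/\mathrm{Fil}_{D-1}\calM_K$: one needs it to agree with the actual geometric count $\kappa(X)$ on classes of varieties of dimension $D$, to descend through the modified Grothendieck ring relations, to be compatible with $\LL$-shifts of the filtration, and to be integer-linear so that the leading-order $N$-asymptotics transfer cleanly to the component count. In characteristic zero this is accessible through Larsen--Lunts' description of the graded pieces in terms of stable birational classes; in positive characteristic one can instead extract $\kappa$ through a point-counting realization over sufficiently large $\bbF_{q^n}$-extensions, where the top-dimensional components contribute the dominant term in the Lang--Weil estimate.
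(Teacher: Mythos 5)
Your Tauberian expansion is, in substance, the content of the result the paper simply cites: the paper's proof of Corollary~\ref{corollary:bourqui} is a short reduction to Proposition 4.7.3.1 of \cite{bilu:motiviceulerproducts}, whose proof is exactly your argument (expand $(1-(\LL T)^a)^{-\rho}$, write $[U_{0,d}]\LL^{-d}$ for $d=p+Na$ as a polynomial in $N$ with coefficients in the completed ring plus an error of dimension tending to $-\infty$, and read off the dimension and the leading multiplicity), combined with the remark that the Hodge--Deligne polynomial used there can be replaced over an arbitrary field by the Euler--Poincar\'e polynomial of \cite{chambert-loir-et-al:motivic-integration}, and with Remark~\ref{remark:limit_dominant_term} supplying the nonvanishing at $T=\LL^{-1}$. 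So your skeleton matches the paper's; the difference is that you re-derive the cited proposition and leave its decisive input --- the functional you call $\kappa$ --- as an acknowledged obstacle.

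That obstacle is the actual crux, and your proposed substitutes do not fully close it. One needs a linear functional on $\mathrm{gr}_D$ of the dimension filtration of $\calM_K$ (hence of $\widehat{\calM}_K$) which, on the $\LL$-shifted class of a variety of dimension $D$, returns the number of its maximal-dimensional components over $K^s$; this is needed not only for the component count but already to know that $\dim U_{0,d}$ equals the dimension of its class (without such a realization a variety's class could a priori lie in a lower filtration step). The paper obtains this over an arbitrary field from the Euler--Poincar\'e polynomial, whose degree is $2\dim X$ with leading coefficient the number of maximal-dimensional components of $X_{K^s}$; your Lang--Weil route applies directly only for $K$ finite (or finitely generated, after spreading out), while the corollary is stated over arbitrary $K$, including large or imperfect fields of positive characteristic. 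A second, smaller repair: you cannot conclude that $\log\kappa(U_{0,d})/\log d \to k^*$, since the functional $\kappa$ may vanish on the nonzero graded class $\bar\alpha_{k^*}$. What your computation does give is that, for $N$ large, $\kappa(U_{0,d})=\sum_k N^k\kappa(\bar\alpha_k)$ is a polynomial in $N$ of degree at most $\rho-1$ taking positive integer values, so the limit is the degree of that polynomial, which lies in $\{0,\ldots,\rho-1\}$ --- and that is all the corollary asserts.
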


\begin{remark} A congruence condition is in practice unavoidable: for example, in the case of projective space $\bbP^n$, the anticanonical bundle is $\mathcal{O}(n+1)$, so $U_{0,d}$ is empty when $d$ is not a multiple of $n+1$. 
\end{remark}

\subsection{Obstacles and strategies}
Our results for zero-cycles are all, in the end, obtained by explicit computations and estimates with generating functions. By contrast, in the Bertini setting which first motivated this work, similar manipulations with generating functions do not appear useful --- instead, to prove point-counting and weight stabilization results, one uses inclusion-exclusion to compare values at a finite step to truncated Euler products.  

The versions of inclusion-exclusion that come into play are quite different in the motivic and arithmetic settings, and, in particular, there does not seem to be an obvious way to merge the point-counting argument with the motivic argument in order to control the error term in the Hadamard topology. It would, however, be quite interesting if such an argument could be made! 

\subsubsection{Betti bounds and Hadamard convergence}\label{subsub:betti-hadamard}

Another angle of attack for Conjecture~\ref{conj.zeta-convergence-general} is by proving \'{e}tale cohomological stability and sub-exponential growth for the cohomology of $U_{d}$, as in the alternative proof of Hadamard convergence for labeled configuration spaces mentioned in Remark~\ref{remark:farb-wolfson}.  This approach is particularly appealing in the specific case of Conjecture~\ref{conj.zeta-convergence-Pn}, in light of Tommasi's \cite{tommasi} results on cohomological stability in characteristic zero. 

In fact, it turns out that one does not need the full strength of cohomological stability for this kind of argument: in \S\ref{section:hadamard-convergence-cohomo-stab}, we show that weight convergence combined with suitable bounds on Betti numbers implies Hadamard convergence. This seems like a promising strategy for proving new instances of our meta-conjecture.

\subsection{Organization}
In \S\ref{section:configurations} we recall some basic notation and results on (very) generalized configuration spaces, motivic Euler products, pre-$\lambda$ rings, and power structures. In \S\ref{section:hadamard_functions}, we introduce the Witt ring, its various topologies, and the zeta measure. The heart of the paper is \S\ref{section:zero-cycles}, where we prove a general convergence result on spaces of pattern-avoiding effective zero-cycles and deduce Thoerem \ref{theorem:pattern-avoiding-0-cycles}. We also discuss the case where the vectors in the set $V$ are non-orthogonal: using a M\"obius function formalism, we show Hadamard convergence over $\bbF_q$ for $q$ larger than some explicit bound, and study some interesting boundary cases. In \S\ref{section:bourqui}, we apply our results from the previous section to prove Theorem~\ref{maintheorem:bourqui}. In \S\ref{section:configuration-random-variable}, we prove Theorem~\ref{theorem:labelled-configuration-spaces}, and in \S\ref{section:hadamard-convergence-cohomo-stab} we explain the link with cohomological stability. Finally, in Appendix~\ref{appendix:computations} we give some computations related to the boundary cases for Hadamard convergence discussed in \S\ref{section:zero-cycles}. 

\subsection{Acknowledgements} The problem of identifying a reasonable setting to unify the Bertini theorems of Poonen and Vakil-Wood was posed to the first and third authors as a project by Ravi Vakil at the 2015 Arizona Winter School.  The answer we suggest here came to us only after several detours through other projects, but we thank Ravi as well as the school organizers for providing this initial opportunity, and  Ravi for his continued support along the way! We are also very grateful to Benson Farb for helpful conversations and feedback during the preparation of this work. 

During the final stages of this project, the first author was funded by the European Union's Horizon 2020
research and innovation programme under the Marie Sk\l odowska-Curie grant agreement No 893012. The third author was supported during the preparation of this work by the National Science Foundation under Award No. DMS-1704005.

\section{Recollections}\label{section:configurations} In this section we recall some basic definitions and results on generalized configuration spaces, motivic Euler products, and power structures on pre-$\lambda$ rings. 
\subsection{Grothendieck rings} In this paper $K_0(\Var/K)$, $\calM_K$, $\widehat{\calM_K}$, etc. are all built starting with the \emph{modified} Grothendieck ring of varieties (see \ref{sss.groth-ring} above); this is equivalent to the standard definition via cut and paste relations in characteristic zero but in characteristic $p$ gives a better-behaved quotient. 

We also consider, for $X/K$ a variety, the relative Grothendieck rings $K_0(\Var/X)$, $\calM_X$, and $\widehat{\calM_X}$, defined completely analogously but starting with varieties over $X$ instead of $\Spec K$.  

We refer the reader to \cite[Section 2]{bilu-howe} for more details on these points. 

\subsection{Generalized configuration spaces and motivic Euler products}\label{subsect_gen_conf_spaces_Euler_prods} We will briefly cover the basic definitions for the reader's convenience; for further discussion and properties of generalized configuration spaces and motivic Euler products beyond what is included here, we refer the reader to \cite[Sections 3.2, 6.1 and 6.2]{bilu-howe}. 
\subsubsection{Generalized configuration spaces} \label{subsect.gen_conf_spaces}
Suppose given a \emph{label set} $S$ and a finite multiset $\lambda$ supported on $S$, i.e. an element of $\bbZ_{\geq 0}^S$ that is zero on all but finitely many $s \in S$. We write $|\lambda|=\sum_{s \in S} \lambda(s).$  

For a quasi-projective variety $X/K$ we define the $\lambda$-labeled configuration space of $X$ to be
\[ C^\lambda X := \left( \left(\prod_{s \in S} X^{\lambda(s)}\right) \backslash \Delta \right) / \prod_{s} \Sigma_{\lambda(s)}. \]
where $\Delta$ is the big diagonal and $\Sigma_k$ denotes the permutation group on $k$ elements so that the product group acts in the obvious way. The points of $C^\lambda X$ in an algebraically closed field $K$ are given by labelings of $|\lambda|$ distinct points in $X(K)$ by elements of $S$ such that the total multiset of labels is equal to $\lambda$. For example, if $\lambda = (a_1, a_2, \ldots, a_k) \in \bbZ_{\geq 0}^{k} \backslash \{\vec{0}\}$, then $C^\lambda X$ is the configuration space of $a_1 + a_2 + \ldots + a_k$ distinct points on $X$ with $a_i$ of the points labeled by $i$ for each $1 \leq i \leq k$; in other words, a colored configuration space of $X$. 

The construction generalizes to allow, for each $s \in S$, a \emph{space} of labels, here interpreted to be a variety $Y_s/X$. One obtains a variety $C^\lambda \left( (Y_s/X)_{s \in S} \right)$, given by
\[ C^\lambda\left( (Y_s/X)_{s \in S} \right):= \left( \left(\prod_{s \in S} Y_s^{\lambda(s)}\right) \backslash \Delta \right) / \prod_{s} \Sigma_{\lambda(s)}, \]
 with a natural map to $C^\lambda X$ (here $\Delta$ is the inverse image of the big diagonal in the definition of $C^\lambda X$). 

\subsubsection{Motivic Euler products}

The results of \cite{bilu:motiviceulerproducts} allow one to extend the construction of generalized configuration spaces to allow the spaces of labels $Y_s$ to be replaced with classes of labels $a_s$ in a relative Grothendieck ring $K_0(\Var/X), \calM_X$, or $\widehat{\calM_X}$. The result is a class $C_X^\lambda \left((a_s)_{s \in S}\right)$ in the corresponding relative Grothendieck ring over $C^\lambda X$ (the actual definition is explained in \ref{sss.def-config} below); when $a_s = [Y_s/X]$ (i.e. the class of $Y_s$ in a relative Grothendieck ring over $X$) we have the natural identity
\[ [ C^\lambda \left( (Y_s/X)_{s \in S} \right) / C^\lambda X] = C_X^\lambda \left( (a_s)_{s \in S} \right). \]

\begin{remark} We write $C^\lambda_X(a)$ if all $a_s$ are taken to be equal to the same class~$a$. \end{remark}

In the above, if the label set $S$ is taken to be the non-zero elements of an abelian monoid $M$, then a multiset $\lambda$ as above is called a \emph{generalized partition}. In this case, it makes sense to consider the sum of its elements $\sum \lambda \in M$, and for $m \in M$ we say $\lambda \vdash m$ or $\lambda$ \emph{partitions} $m$ if $\sum\lambda=m.$ 
This setup applies in particular when $M$ is a free abelian monoid, e.g. $M=\bbZ_{\geq 0}^k$.

This extension is carried out so as to give a reasonable notion of an ``infinite product over $X$'', or, a \emph{motivic Euler product}, satisfying the natural properties one would expect for manipulating products. Indeed, for $a_i \in K_0(\Var/X)$, one defines
\[ \prod_{x\in X} \left(1 + a_{1,x} t + a_{2,x} t^2 + \ldots\right) := 1 + \sum_{m \geq 1} \left(  \sum_{\lambda \vdash m} C_X^\lambda \left( (a_s)_{s \in \bbN}\right) \right) t^m \in 1 + t K_0(\Var/K)[[t]]\]
where the sums for each coefficient are obtained by first applying the forgetful maps 
\[ K_0(\Var/C^\lambda X) \rightarrow K_0(\Var/K).\]

One can replace $K_0(\Var/K)$ here with $\calM_K$ or $\widehat{\calM_K}$. We can also make a similar construction for an abelian monoid $M$ as above by using the ring of power series in the variables $t_m$, $m \in M$, with $t_{m_1}t_{m_2}=t_{m_1+m_2}$. In this setting, the definition of the motivic Euler product becomes
\[ \prod_{x\in X} \left( 1 + \sum_{s \in M\backslash \{0\}} a_{s,x} t_s \right) := 1 + \sum_{m \in M \backslash \{0\}} \left( \sum_{\lambda \vdash m} C_X^\lambda \left((a_s)_{s \in M \backslash \{ 0\}} \right) \right) t_m. \]

Standard power series in one variable are obtained using $M=\bbZ_{\geq 0}$ via the identification $t^m = t_1^m = t_m$. More generally, for $M=\bigoplus_{i \in I} \bbZ_{\geq 0}$, we obtain power series on the variables $t_i, i \in I$. In particular, $M=\bbZ_{\geq 0}^k$ gives power series on variables $t_1, \ldots, t_k$, and because of this for $m \in \bbZ_{\geq 0}^k$ we frequently write the product $\bt^m = \prod_{1 \leq i \leq k} t_i^{m(i)}$ in place of $t_m$ in the above. 

\subsubsection{Definition}\label{sss.def-config} We briefly recall the definition of the classes $C_X^\lambda\left((a_s)_{s \in S}\right)$ used above: The first step is to define for an element $a\in K_0(\Var/X)$ (or $\calM_X$ or $\widehat{\calM_X}$) its symmetric powers $(\Sym^{n}_X(a))_{n\geq 1}$, in such a way that 
\[ \Sym^n_X(a) \in K_0(\Var/{\Sym^n(X)}), \]
and so that for all $a,b\in K_0(\Var/X)$,
\begin{equation}\label{eqn:symmectric-power-addition} \Sym^{n}_X(a + b) = \sum_{k=0}^n \Sym^k_X(a)\boxtimes\Sym^{n-k}_X(b).\end{equation}
In other words, one lifts the relative Kapranov zeta function on $K_0(\Var/X)$ so that the coefficient of $t^i$ lives in $K_0(\Var / \Sym^i X)$ rather than $K_0(\Var/X)$.

Then, for a partition $\lambda$ and classes $a_s$, we consider
 $$\Sym^{\lambda}_X((a_s)_{s\in S}):=\prod_{s} \Sym^{\lambda(s)}_X(a_s) \in K_0\left(\Var / \prod_{s} \Sym^{\lambda(s)}X\right).$$
Pulling back via the inclusion $\Conf^{\lambda}X \to \prod_{s} \Sym^{\lambda(s)}X$ gives $C^\lambda_X \left( (a_s)_{s \in S} \right).$ We often write this restriction with the subscript ``$*$'', or even ``$*,X$'' if we want to emphasize that the diagonal was removed at the level of points of $X$. If $a_s=a$ for all $s$ we also just write $a$ instead of $(a_s)$. So, e.g.,
\begin{equation}
\label{eq.config-space-def}	 C_X^\lambda (a)= \left(\Sym_X^\lambda(a)\right)_*=\left(\prod_{s} \Sym_X^{\lambda(s)}(a)\right)_{*,X}. \end{equation}
In particular, the variety $\prod_s\Sym^{\lambda(s)}X$ will be denoted $\Sym^{\lambda}X$. 
\subsection{Pre-$\lambda$ rings and power structures} 
Recall (e.g., from \cite{howe:mrv1}) that a pre-$\lambda$ ring is a ring $R$ equipped with a group homomorphism
\begin{align*} \lambda_t : (R, +) & \rightarrow (1+ tR[[t]], \times) \\
r & \mapsto 1 + \lambda_1(r)t  + \lambda_2(r)t^2 + \ldots 
\end{align*}
such that $\lambda_1(r)=r$. We require always the further condition that $\lambda_t(1)=1+t$. 

It is equivalent, and for us usually more convenient, to give the homomorphism
\[ \sigma_t : r \mapsto \lambda_{-t}(-r) = 1 + \sigma_1(r) t + \sigma_2(r) t^2 + \ldots, \]
obtained by making the substitution $t \rightarrow -t$ in $\lambda_t(-r)$. The condition $\lambda_1(r)=r$ is equivalent to $\sigma_1(r)=r$ and the condition $\lambda_t(1)=1+t$ is equivalent to $\sigma_t(1)=\frac{1}{1-t}.$ 

The operations $\lambda_i$ and $\sigma_i$ on $R$ are conveniently packaged and extended as a pairing
\[ \Lambda \times R \rightarrow R \]
for $\Lambda$ the ring of symmetric functions -- for $e_k$ the elementary symmetric functions and $h_k$ the complete symmetric functions we have 
\[ (e_k, r) = \lambda_k(r), \; (h_k, r)=\sigma_k(r), \]
and for any fixed $r \in R$ the induced map $(\bullet, r): \Lambda \rightarrow R$ is a ring homomorphism. 

\begin{example}
If $G$ is a finite group and $R$ is the complex representation ring of $G$, then $R$ is equipped with a natural pre$-\lambda$ ring structure such that, for any representation in $V$ with corresponding class $[V] \in R$ (which is also identified with the trace of $V$, viewed as a conjugation invariant function on $G$), 
\[ \lambda_k ([V])= [\textstyle{\bigwedge\nolimits^k V}],\; \sigma_k([V])=[\Sym^k V]. \]
For any $f \in \Lambda$, $(f, [V])$, viewed as a conjugation-invariant function on $G$, is the function whose value on $g \in G$ is obtained by applying $f$ to the eigenvalues of $g$ acting on $V$. 
\end{example}

\begin{example}\label{example:Kapranov-pre-lambda}
The Kapranov zeta function gives a pre-$\lambda$ ring structure on $K_0(\Var/K)$, $\calM_K$, and $\widehat{\calM_K}$ via
\[ \sigma_t([X]) = Z_X^\Kap(t). \]	
\end{example}

\begin{remark}
In categories of a combinatorial nature such as varieties or sets, one has symmetric powers but no exterior powers. However, the original formulation of (pre-)$\lambda$-rings takes places in categories of vector bundles, where exterior powers are natural. This explains why we put the emphasis on $\sigma$-operations instead of $\lambda$-operations as in classical presentations of this topic. In the literature, there is typically no restriction on $\lambda_t(1)$ and $\sigma_t(1)$ -- without this condition, one can define a new pre-$\lambda$ ring by swapping the $\sigma$ and $\lambda$-operations, so our requirements on $\lambda_t(1)$ and $\sigma_t(1)$ serve to eliminate this confusion. Our choice is the ``right one" in the sense that the operations enforced by this convention on Grothendieck rings of combinatorial and linear categories are compatible with natural functors like passing from a group action on a set to the induced permutation representation or from a variety to its compactly supported cohomology. 
\end{remark}

\begin{remark} A pre$-\lambda$ ring $R$ is a $\lambda$-ring if the map $\lambda_t$ is a pre-$\lambda$ ring homomorphism (for the Witt ring and pre-$\lambda$ structure on $1+tR[[t]]$), i.e. if it is also multiplicative and if it identifies the pre-$\lambda$ ring structures. If $R$ is torsion free over $\bbZ$, then in terms of Adams operations (see \ref{subsub.computing-simple-powers}) this is equivalent to asking that $(p_m, \bullet)$ be a ring homomorphism and $(p_{m_1}, (p_{m_2}, \bullet))=(p_{m_1m_2}, \bullet).$ The natural pre-$\lambda$ ring structure on the Grothendieck ring of a symmetric monoidal category is in fact a $\lambda$-ring structure, but it is not known whether the pre-$\lambda$ ring structure on $K_0(\Var/K)$ and its variants is a $\lambda$-ring structure. 	
\end{remark}

\subsubsection{Power structures on pre-$\lambda$ rings} In \cite{glm:power-structure, glm:power-structure2} (see also \cite{howe:mrv1}) it is explained how a pre-$\lambda$ structure on a ring $R$ extends naturally to a \emph{power structure}, which gives a systematic way to make sense of expressions like 
\[ \left(1 + \sum a_{\vec{d}}\; \bt^{\vec{d}}\right)^{b} \]
for $a_{\vec{d}}, b \in R$; the result is a new power series with coefficients in $R$ and constant term $1$, and we have
\[ (1+t)^{r}=\lambda_t(r), \; \left(\frac{1}{1-t}\right)^{r}=\sigma_t(r). \]

 In the case of $K_0(\Var/K)$ and its variants, the power structure attached to the Kapranov zeta function (viewed as a pre-$\lambda$ structure as in Example \ref{example:Kapranov-pre-lambda}) is closely related to motivic Euler products: in fact, it exactly captures the motivic Euler products with constant coefficients. Indeed, for classes $a_{\vec{d}} \in K_0(\Var/ K)$, we have 
\[ \left( 1 + \sum_{\vec{d} \in \bbZ_{\geq 0}^k \backslash \{ \vec{0}\}} a_{\vec{d}}\; \bt^{\vec{d}} \right)^{[X]}  = \prod_{x\in X} \left( 1 + \sum_{\vec{d} \in \bbZ_{\geq 0}^k \backslash \{ \vec{0}\}} a_{\vec{d}}\; \bt^{\vec{d}} \right) \]
where in the right to interpret the motivic Euler product we pull back the elements $a_{\vec{d}}$ to $K_0(\Var/X)$ as constant classes.
In particular, we obtain 
\[ Z^\Kap_X(t) = \left(\frac{1}{1-t}\right)^{[X]} = \prod_{x \in X} \frac{1}{1-t}. \]

\subsubsection{Computing simple powers}\label{subsub.computing-simple-powers}
In a useful special case, we now explain a direct explicit formula for computing these powers (or equivalently, the motivic Euler products). This formula will be used to establish several estimates in \S\ref{section:zero-cycles}. 

To that end, we consider the power sum symmetric functions
\[ p_m = x_1^m + x_2^m + \ldots \in \Lambda \]
as well as their M\"obius-inverted counterparts
\[ p_m' = \frac{1}{m}\sum_{d|m} \mu(m/d) p_d \in \Lambda[1/m] \]
studied in \cite{howe:mrv1}. We note that for $R$ a pre-$\lambda$ ring, 
\[ (p_m, \bullet): R\rightarrow R \]
is an additive homomorphism (these are the Adams operations). Indeed, this follows from the identity 
\begin{equation}\label{equation:powersum} \sum_{i\geq 0} p_{i+1}t^i = d\log \sum_{i\geq 0}{h_i}t^i.\end{equation}
and the fact that $\sigma_t$ is a homomorphism. As a consequence, we also have that 
\[ (p_m', \bullet): R[1/m]\rightarrow R[1/m] \]
is a homomorphism of additive groups.

As shown in \cite[Lemma 2.8]{howe:mrv1}, we have
\begin{lemma}\label{lemma:pre-lambda-power-formula} Suppose $f(t) \in \bbZ[[t_1, \ldots, t_n]]$ with constant coefficient $1$ and $r \in R$ for a pre-$\lambda$ ring $R$. Then, in $\left(R \otimes_{\bbZ} \bbQ\right) [[t_1, \ldots, t_n]]$, 
\[ \log \left( f(t_1, t_2, \ldots, t_k)^r \right) = \sum_{m \geq 1} p_m'(r) \log f(t_1^m, t_2^m, \ldots, t_k^m) . \]
where the exponentiation on the left-hand side is for the power structure determined by the pre-$\lambda$ structure on $R$ and $\log(1 + \ldots)$ is evaluated via the formal series
\[ \log (1+s) = s - \frac{s^2}{2} + \frac{s^3}{3} - \ldots. \]	
\end{lemma}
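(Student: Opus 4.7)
The plan is to reduce the asserted identity in several stages down to the single-variable case $f(t) = 1/(1-t)$, where it follows from equation \eqref{equation:powersum} combined with M\"obius inversion. Writing $\frakm = (t_1, \ldots, t_n)$, the first observation is that both sides of the identity are additive in $f$ under multiplication: on the left, $\log((fg)^r) = \log f^r + \log g^r$ by multiplicativity $(fg)^r = f^r g^r$ of the power structure; on the right, $\log((fg)(t^m)) = \log f(t^m) + \log g(t^m)$ directly. Because the power structure is continuous for the $\frakm$-adic topology and coincides with iterated multiplication on integer exponents, this additivity propagates to $\frakm$-adically convergent products with integer exponents on the factors.

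Next, I would invoke the Witt/necklace factorization: every $f \in 1 + \frakm \bbZ[[t_1, \ldots, t_n]]$ admits a unique $\frakm$-adically convergent factorization
\[ f = \prod_{\vec a \neq 0} (1 - t^{\vec a})^{-a_{\vec a}}, \quad a_{\vec a} \in \bbZ, \]
where the $a_{\vec a}$ are obtained by M\"obius-inverting the coefficients of $\log f$. By the additivity observation, it then suffices to verify the identity for $f = (1 - t^{\vec a})^{-1}$, and, substituting $t \to t^{\vec a}$ in a single-variable argument, for $f(t) = 1/(1-t)$ in one variable.

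For this base case, $f^r = \sigma_t(r)$ by definition of the power structure, and applying \eqref{equation:powersum} to $r$ and then integrating yields $\log \sigma_t(r) = \sum_{m \geq 1} (p_m(r)/m)\, t^m$. M\"obius-inverting the defining relation $m\, p_m' = \sum_{d|m} \mu(m/d)\, p_d$ gives $p_m = \sum_{d | m} d\, p_d'$. Substituting and reindexing via $m = d\ell$ converts the right-hand side into $\sum_{d \geq 1} p_d'(r) \log(1/(1-t^d))$, which is exactly the asserted formula in the base case.

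The main obstacle I foresee is justifying the continuity of the power structure for the $\frakm$-adic topology and its distributivity over the infinite Witt product. Both are part of the standard theory of power structures attached to pre-$\lambda$ rings (see \cite{glm:power-structure,glm:power-structure2}), but one can bypass them entirely by working modulo $\frakm^N$ for each $N$: truncating reduces everything to a finite product and sidesteps convergence altogether, leaving only the combinatorial identity already verified in the base case.
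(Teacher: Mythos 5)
The paper does not actually prove this lemma: it is quoted verbatim from \cite[Lemma 2.8]{howe:mrv1}, so there is no in-paper argument to compare yours against line by line. On its own merits, your proof is correct and is the natural one. The reduction rests on three facts about the power structure attached to the pre-$\lambda$ structure: multiplicativity in the base, $(fg)^r=f^rg^r$ (one of the defining properties in \cite{glm:power-structure, glm:power-structure2}, which also gives $(f^{-1})^r=(f^r)^{-1}$ and hence handles the negative integer exponents in the Witt factorization); finite determinacy, i.e.\ $f\equiv g \bmod \frakm^N$ implies $f^r\equiv g^r \bmod \frakm^N$, which is what makes your mod-$\frakm^N$ truncation legitimate and is immediate from the explicit coefficient formula defining the power structure; and compatibility with monomial substitutions, $\bigl(f(\bt^{\vec{a}})\bigr)^r=\bigl(f(t)^r\bigr)\big|_{t\mapsto \bt^{\vec{a}}}$, which you use silently in passing from $(1-\bt^{\vec{a}})^{-1}$ to the one-variable case $1/(1-t)$ and which deserves an explicit mention (the paper itself invokes the analogous statement for motivic Euler products via \cite[Section 6.5]{bilu-howe}). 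The multivariable Witt factorization $f=\prod_{\vec{a}\neq 0}(1-\bt^{\vec{a}})^{-a_{\vec{a}}}$ with integer exponents is correctly available by induction on total degree (the parenthetical about M\"obius-inverting $\log f$ is not the cleanest way to see integrality, but nothing depends on it), and the base case is right: \eqref{equation:powersum} gives $\log\sigma_t(r)=\sum_{m\geq 1}\frac{p_m(r)}{m}t^m$, and $p_m=\sum_{d\mid m} d\,p_d'$ together with the reindexing $m=d\ell$ yields $\sum_d p_d'(r)\log\frac{1}{1-t^d}$, as claimed.
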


\section{The ring of Hadamard functions} \label{section:hadamard_functions}
\subsection{The Witt ring structure on rational functions} \label{subsect:witt_ring_rational}
We start by explaining the ring structure on the set $\calR_1$ of complex rational functions $f$ with $f(0)=1$. One can identify 
$\calR_1$ with a Grothendieck ring: let $\mathbf{Rep}_{\bbZ}$ be the the category of pairs $(V, \rho)$ where $V$ is a finite dimensional complex vector space and $\rho$ is a representation of $\bbZ$ on $V$ (to give $\rho$ is equivalent to giving the automorphism $\rho(1)$ of $V$). The characteristic power series of a linear map induces an injective map
\begin{align*}
K_0(\mathbf{Rep}_{\bbZ}) & \rightarrow 1 + t \bbC[[t]] \\
[(V, \rho)] & \mapsto \frac{1}{\det(1-t \rho(1))}
\end{align*}
with image $\calR_1$. The induced addition on $\calR_1$ is multiplication of power series, called Witt addition; the induced multiplication is called Witt multiplication, and the set $\calR_1$ equipped with this ring structure is also known as the rational Witt ring of $\bbC$. 

We note that $K_0(\mathbf{Rep}_{\bbZ})$ is also naturally isomorphic to the group ring $\bbZ[\bbC^\times]$, where the class $[a]$ in the group ring is matched with the class of the 1-dimensional representation with $\rho(1)$ given by multiplication by $a$. The induced identification of $\calR_1$ with $\bbZ[\bbC^\times]$  sends $f \in \calR_1$ to the divisor of $\frac{1}{f(1/t)}$. 

\subsubsection{The big Witt ring}\label{sss:big-witt} These ring structures extend naturally (e.g., by continuity in the coefficients) to $1+t\bbC[[t]]$; the result is the big Witt ring $W(\bbC)$. The subring 
\[ W(\bbZ) = 1+t\bbZ[[t]] \subset W(\bbC) \]
also admits a natural interpretation as the Grothendieck ring $K_0(\textbf{AlFin $\bbZ-$set})$ of the \emph{almost finite cyclic sets}  of \cite{dress-siebeneicher:burnsidering}: Here an almost finite cyclic set is a set $S$ with an action of $\bbZ$ such that the fixed points $X^{n\bbZ}$ are finite for each $n$ and $X= \bigcup_n X^{n\bbZ}.$ If we denote by $a_n(S)$ the (finite) number of orbits of length $n$ in $S$, then the identification is induced by 
\[ [S] \mapsto Z_S(t) = \prod_{n=1}^\infty \left(\frac{1}{1-t^n}\right)^{a_n(S)}. \]

Note that there is a commutative diagram
\begin{equation}\label{eqn:witt-diagram} \xymatrix{ K_0(\textbf{Fin $\bbZ-$set})\ar[d] \ar[r] & K_0(\textbf{AlFin $\bbZ-$set})=W(\bbZ)=1+t\bbZ[[t]] \ar[d] \\ K_0(\mathbf{Rep}_\bbZ)=\calR_1 \ar[r] & W(\bbC)=1+t\bbC[[t]] }.\end{equation}
Here the left vertical arrow sends a $\bbZ$-set $S$ to the permutation representation $\bbC[S]$, and its image consists of the functions in $\calR_1$ with zero and pole sets both given by unions of Galois-orbits of roots of unity, or equivalently the functions 
\[ \prod_{n=1}^\infty \left(\frac{1}{1-t^n}\right)^{a_n} \]
with $a_n \in \bbZ$ and equal to zero for $n$ sufficiently large. In particular, this can be used to show that the two ring structures on $\calR_1 \cap W(\bbZ)$ agree.

\subsection{$\lambda$-ring structure and Adams operations}\label{subsection:lambda_and_adams}
The symmetric monoidal structure on $\mathbf{Rep}_{\bbZ}$ equips the Grothendieck ring $K_0(\mathbf{Rep}_{\bbZ})$ with the structure of a $\lambda$-ring with $\sigma$-operations induced by symmetric powers and $\lambda$-operations induced by exterior powers. Under the isomorphism with~$\bbZ[\bbC^\times]$, the $\sigma$- and $\lambda$-operations are determined by
\[ \sigma_s( [a] )= 1 + \sigma_1([a])s + \sigma_2([a])s^2 + \cdots = 1 + [a]s+ [a^2] s^2 + \cdots \]
and
\[ \lambda_s( [a] ) = 1 + \lambda_1([a])s + \lambda_2([a])s^2 + \cdots = 1 + [a]s. \]

Thus, in $K_0(\mathbf{Rep}_{\bbZ})$ with the $\lambda$-ring-structure described above, (\ref{equation:powersum}) gives
$$\sum_{i\geq 0} p_{i+1}([a])s^i = \frac{\sigma_s([a])'}{\sigma_s([a])} = \frac{[a]}{1-[a]s},$$
and in particular, the Adams operations are given by $p_{i}([a]) = (p_i, [a])= [a^i].$ 

Finally, we note that symmetric powers of sets define $\sigma$-operations for a $\lambda$-ring structure on $W(\bbZ)$, and using (\ref{eqn:witt-diagram}) we find the two $\lambda$-ring structures agree on 
\[ W(\bbZ) \cap \calR_1 \subset W(\bbC). \] 

\subsection{Topologies on rational functions}\label{subsect.topologies}
\newcommand{\calW}{\mathcal{W}}
We describe three topologies on $\calR_1$.

\subsubsection{The point counting topology} \label{subsubsect:Witt-topology} There is a natural injective map
\[ \calR_1 \hookrightarrow 1+t\bbC[[t]] \]
given by taking the power series expansion at zero. The \textit{point counting topology} on~$\calR_1$ is induced by the product topology on the coefficients of
\[ 1 + t\bbC[[t]] = \bbC^{\bbN}. \]
Note that $\calR_1$ is dense when viewed as a subset of $1+t\bbC[[t]]$ so that the completion of~$\calR_1$ for the point counting topology is identified with $1 + t\bbC[[t]].$ The addition, multiplication, and $\lambda$-ring structure are continuous for the point counting topology, so that they extend to $1+ t\bbC[[t]]$ which is thus a complete topological $\lambda$-ring; this is the big Witt ring $W(\bbC)$ discussed already in \ref{sss:big-witt} above.  

Instead of taking the power-series expansion of a rational function $f$, one could instead take the power-series expansion of $d\log f$, and we would obtain the same topology. In fact, $d\log f$ gives a bijection
\[ 1 + t\bbC[[t]] \rightarrow t \bbC[[t]] = \bbC^{\bbN} \]
that is an isomorphism of topological rings when $\bbC^{\bbN}$ on the right is equipped with the product topology and ring structure. The coefficients of $d \log f$ are also called the \textit{ghost coordinates} on the big Witt ring.

Using this observation, we can also describe the point counting topology in terms of $\bbZ[\bbC^\times]=\calR_1.$ It is induced by the family of semi-norms $|| \cdot ||_j,\, j=1,2, \ldots$
\[ \left|\left| \sum_a k_a [a] \right|\right|_j = \left| \sum_a k_a a^j \right|. \]
Indeed, this follows from the above discussion and the computation
\[ d\log \left(\prod_{a}(1-ta)^{-k_a}\right) = \sum_{j=1}^\infty \left(\sum_a k_a a^j\right) t^{j-1}. \]
We note that there is no natural description of the completion of $\calR_1=\bbZ[\bbC^\times]$ for the point counting topology in terms of divisors on $\bbC^\times$.

\subsubsection{The weight topology}
In the weight topology, a basis of open neighborhoods of $f \in \calR_1$ is given by, for each $r>0$, the set of all rational functions $g$ with the same zeroes and poles as $f$ on the ball $|t| \leq r$. In particular, a sequence converges if and only if on every bounded set the zeroes and poles eventually stabilize.

Viewed as the group ring $\bbZ[\bbC^\times]$, a basis of open neighborhoods of zero is given by the set of all finite sums $\sum_{a \in \bbC^\times} k_a [a]$ supported on the closed ball of radius $r$ around $0 \in \bbC$ (here we are using that $[a]$, as a rational function, has a pole at $a^{-1}$), and a basis of open neighborhoods at any other point is given by translation.

The completion $\widehat{\bbZ[\bbC^{\times}]}^{w}$ of $\bbZ[\bbC^\times]$ for the weight topology can be described as the set of formal sums
\[ \sum_{a \in \bbC^\times} k_a [a] \]
whose support is a discrete subset of $\bbC$ and whose set of accumulation points in $\bbC \sqcup \infty$ is contained in $\{0\}$. The addition, multiplication, and $\lambda$-ring structure are all continuous for the weight topology, and extend to these formal sums.

We note that there is no natural description of the completion of $\calR_1$ for the weight topology in terms of the power series expansion at $0$.

\subsubsection{The Hadamard topology}

The Hadamard topology is most simply described under the isomorphism $\calR_1 \rightarrow \bbZ[\bbC^\times]$, where it is the topology induced by the sub-multiplicative norm
\[ \left|\left| \sum k_a [a] \right|\right|_H= \sum |k_a||a|. \]
It is easy to see the Witt and weight topologies on $\calR_1$ are not comparable (i.e., neither is finer than the other). However:
\begin{lemma} The Hadamard topology refines both the point counting and weight topologies on $\calR_1$. 
\end{lemma}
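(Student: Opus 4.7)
The plan is to verify the two refinement claims separately. Since Witt addition makes $\calR_1 = \bbZ[\bbC^\times]$ into a topological group for each of the three topologies (all ring operations being continuous), it suffices to compare the bases of open neighborhoods of $0$. The key arithmetic input in both arguments is that the coefficients $k_a$ lie in $\bbZ$, so any nonzero $k_a$ satisfies $|k_a| \geq 1$; in particular, for any $a_0$ in the support of $x = \sum_a k_a [a]$,
\[ |a_0| \;\leq\; |k_{a_0}||a_0| \;\leq\; \sum_a |k_a||a| \;=\; \|x\|_H. \]

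For the weight topology, recall that a basic open neighborhood of $0$ is $U_r = \{x \in \bbZ[\bbC^\times] : \mathrm{supp}(x) \subset \{|a| \leq r\}\}$ for $r > 0$. The displayed inequality above shows immediately that $\|x\|_H \leq r$ forces every $a \in \mathrm{supp}(x)$ to satisfy $|a| \leq r$, so $\{x : \|x\|_H \leq r\} \subset U_r$. Hence the Hadamard topology refines the weight topology.

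For the point-counting topology, the topology is induced by the seminorms $\|x\|_j = |\sum_a k_a a^j|$, $j \geq 1$. I would estimate directly:
\[ \|x\|_j \;\leq\; \sum_a |k_a|\,|a|^j \;\leq\; \left(\max_{a \in \mathrm{supp}(x)} |a|\right)^{j-1} \sum_a |k_a|\,|a| \;\leq\; \|x\|_H^{j-1} \cdot \|x\|_H \;=\; \|x\|_H^{j}, \]
where the second inequality uses again that $|a| \leq \|x\|_H$ for $a \in \mathrm{supp}(x)$. Thus for any $j$ and $\varepsilon > 0$, $\|x\|_H < \varepsilon^{1/j}$ implies $\|x\|_j < \varepsilon$, proving that the Hadamard topology refines the point-counting topology.

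There is no substantive obstacle here: the whole argument is a two-line consequence of the integrality of the coefficients $k_a$. The only thing to be mildly careful about is ensuring that one invokes the correct explicit description of basic weight neighborhoods around $0$ (namely, the integral divisors supported in a closed disk) from the earlier discussion, rather than trying to translate through the rational-function side, which obscures the triviality of the bound.
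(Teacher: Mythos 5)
Your proof is correct and takes essentially the same route as the paper's: the paper tersely asserts that each seminorm $\|\cdot\|_j$ is continuous for $\|\cdot\|_H$ and that any $g$ with $\|f-g\|_H<r$ remains supported in the ball of radius $r$, and your explicit estimates ($|a|\le \|x\|_H$ for $a$ in the support, hence $\|x\|_j\le \|x\|_H^{\,j}$, both resting on the integrality of the coefficients $k_a$) are precisely the details behind those two assertions.
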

\begin{proof}
Each of the semi-norms $|| \cdot ||_j$ defining the point counting topology is continuous for the norm $|| \cdot ||_H$, and thus the Hadamard topology refines the point counting topology.

To compare with the weight topology, it suffices to observe that if $f=\sum k_a [a]$ is supported inside the closed ball of radius $r$, then so is any $g$ with $||f - g||_H < r$.
\end{proof}

\subsection{The ring of Hadamard functions}
We define the Hadamard-Witt ring $\calW$ to be the completion of $\bbZ[\bbC^\times]$ for the norm $||\cdot||_H$.  It can be identified with the set of discretely supported divisors
\[ \sum_{a \in \bbC^\times} k_a [a] \]
such that $\sum_{a \in \bbC^\times}  |k_a| |a| < \infty$.
It is an elementary computation to check that the multiplication and $\sigma$ (or $\lambda$) operations are continuous, so that they extend to $\calW$ which is thus a complete topological $\lambda$-ring.

A \emph{Hadamard} function is a meromorphic function $f$ on $\bbC$ such that $f$ can be written as a quotient $f=\frac{g}{h}$ where $g$ and $h$ are both entire functions of genus zero. In the next lemma, we extend the identification of $\bbZ[\bbC^\times]$ with $\calR_1$ to an identification of $\calW$ with the set $\calH_1$ of Hadamard functions $f$ such that $f(0)=1$.

\begin{lemma}
If $\sum_{a \in \bbC^\times} k_a [a] \in \calW$ and
\[ \sum_{a \in \bbC^\times} k_a [a] = \sum_{a \in \bbC^\times} k_a^+ [a] + \sum_{a \in \bbC^\times} k_a^- [a]\]
is the unique decomposition with $k_a^+ \geq 0$ and $k_a^- \leq 0$, then the infinite products
\[ \prod_a \left(\frac{1}{1-ta}\right)^{k_a^-} \textrm{ and } \prod_a \left(\frac{1}{1-ta}\right)^{-k_a^+} \]
converge uniformly on compact sets to entire functions of genus zero $f^-$ and $f^+$. Furthermore, the map
\[ \sum_{a \in \bbC^\times} k_a[a] \mapsto \frac{f^-}{f^+} \]
induces a bijection $\calW \rightarrow \calH_1$ extending the bijection $\bbZ[\bbC^\times] \rightarrow \calR_1$.
\end{lemma}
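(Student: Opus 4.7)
The plan is to establish three things in turn: uniform convergence of the products defining $f^\pm$ on compact sets, genus-zero entirety of the limits, and bijectivity of the resulting map. For the convergence, I would fix $R > 0$ and work on $|t| \leq R$: the Hadamard summability $\sum_a |k_a||a| < \infty$ together with the fact that $|k_a| \geq 1$ on the support forces all but finitely many $a$ to satisfy $|a| \leq 1/(2R)$ (so in particular the support accumulates only at $0$). For these $a$ one has $|ta| \leq 1/2$, and the power series for $\log(1-w)$ yields the elementary bound $|\log(1-ta)| \leq 2|t||a|$. Weighting by $|k_a^\pm|$ and summing bounds $\sum_a k_a^\pm \log(1-ta)$ uniformly on $|t| \leq R$; exponentiating gives uniform convergence of $f^\pm$ to entire functions. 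Genus zero follows directly, since each $f^\pm$ is presented as a Weierstrass product built only from rank-$0$ primary factors $(1-ta)$ and its zero set $\{1/a\}$ satisfies the summability $\sum_a |k_a^\pm| |a| < \infty$, i.e.\ the exponent of convergence of zeros is at most $1$ and no exponential compensation is needed.

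Next, since $f^\pm(0) = 1$ by construction, $f^-/f^+ \in \calH_1$. When $\sum k_a[a]$ has finite support, the infinite products collapse and the map recovers the image of $\sum k_a[a]$ under the identification $\bbZ[\bbC^\times] \cong \calR_1$ of \S\ref{subsect:witt_ring_rational}, so the new assignment extends the old bijection. Injectivity is then immediate: the integer $k_a$ is read off as the order of zero (or pole) of $f^-/f^+$ at $t = 1/a$. For surjectivity, I would take $F \in \calH_1$ and write $F = g/h$ with $g, h$ entire of genus zero; since $F(0) = 1$, the orders of vanishing of $g$ and $h$ at $0$ coincide, and after cancelling a common factor $t^m$ and rescaling one may assume $g(0) = h(0) = 1$. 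Hadamard's factorization theorem in genus zero then produces
$$g(t) = \prod_n \left(1 - \frac{t}{w_n}\right), \qquad h(t) = \prod_m \left(1 - \frac{t}{z_m}\right),$$
with $\sum 1/|w_n|$ and $\sum 1/|z_m|$ both finite. Defining $k_a$ to be the multiplicity of $a$ among $\{1/z_m\}$ minus its multiplicity among $\{1/w_n\}$, after cancelling common occurrences, produces an element $\sum k_a[a] \in \calW$ whose image is $F$.

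The only nontrivial ingredient will be Hadamard's classical factorization theorem for entire functions of finite order, which supplies the inverse map; the rest of the argument reduces to the elementary bound $|\log(1-w)| \leq 2|w|$ on $|w| \leq 1/2$ and the fact that the support of any element of $\calW$ can accumulate only at $0$. The one place to take care is in arranging that the positive/negative split $k_a = k_a^+ + k_a^-$ is taken \emph{after} performing the cancellations between the zeros of $g$ and those of $h$, so that the canonical decomposition of Step~2 is recovered.
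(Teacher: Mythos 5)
Your proof is correct and takes essentially the same route as the paper, whose entire proof is to invoke the genus-zero case of the Hadamard factorization theorem. Your convergence estimate via $|\log(1-w)|\leq 2|w|$, the genus-zero verification from $\sum |k_a^\pm||a|<\infty$, and the surjectivity construction from a factorization $F=g/h$ are precisely the details the paper leaves implicit in that citation.
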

\begin{proof}
This is an immediate consequence of the Hadamard factorization theorem in the case of genus zero entire functions.
\end{proof}

Because the Hadamard topology refines the point counting and weight topologies, there are natural maps between the completions, and these maps have natural function-theoretic interpretations:
\begin{enumerate}
\item The map from the Hadamard completion to the point counting completion is given by taking $f(t) \in \calH_1$ to its power series at 0.
\item The map from the Hadamard completion to the weight completion is given by taking $f(t) \in \calH_1$ to the divisor of $\frac{1}{f(1/t)}$.
\end{enumerate}
The constructions in this section are summarized by the following diagram:

$$\xymatrix @R=0.6in @C=1.3in { & \calR_1 = \bbZ[\bbC^{\times}] \ar[d]_{\substack{\text{Hadamard}\\{\text{topology}}}}\ar[rd]^{\substack{\text{weight}\\{\text{topology}}}}\ar[ld]_{\substack{\text{point}\\ \text{ counting}\\ \text{topology}}} \\
W(\bbC) &\ \calH_1 = \calW\ \ar@{_{(}->}[l]^{\text{Taylor expansion at zero}}\ar@{^{(}->}[r]_{f\mapsto -\mathrm{Div}(f(t^{-1}))}&  \widehat{\bbZ[\bbC^{\times}]}^{w}\\}
$$

\subsection{The zeta measure}
Zeta functions of varieties give an interesting source of elements of the ring $\calR_1$. In fact, we have the following:
\begin{proposition}\label{prop-zeta-measure-lambda-ring}The assignment $X\mapsto Z_X(t)$ induces  a map of pre-$\lambda$-rings
$$K_0(\Var/\bbF_q)\to \calR_1,$$
where $\calR_1$ is equipped with the Witt ring structure.
\end{proposition}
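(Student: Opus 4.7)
My plan is to factor the zeta measure through the sub-ring $W(\bbZ) = K_0(\textbf{AlFin $\bbZ-$set}) \subset W(\bbC)$ from \S\ref{sss:big-witt}, using the functor $X \mapsto X(\overline{\bbF_q})$ equipped with its Frobenius action. For any variety $X/\bbF_q$, this is an almost-finite $\bbZ$-set: $X(\overline{\bbF_q})^{n\bbZ} = X(\bbF_{q^n})$ is finite, and $X(\overline{\bbF_q}) = \bigcup_n X(\bbF_{q^n})$. Disjoint unions and fiber products of varieties transport to disjoint unions and products of cyclic sets, and universal homeomorphisms induce Frobenius-equivariant bijections on $\overline{\bbF_q}$-points, so we obtain a well-defined ring homomorphism $K_0(\Var/\bbF_q) \to W(\bbZ)$. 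Under the identification $W(\bbZ) \hookrightarrow W(\bbC)$, the class $[X(\overline{\bbF_q})]$ is sent to $\prod_n (1-t^n)^{-a_n}$, where $a_n$ is the number of Frobenius orbits of length $n$ in $X(\overline{\bbF_q})$, equivalently the number of closed points of $X$ of degree $n$; this is exactly the Euler product expansion of $Z_X(t)$, and by the Grothendieck-Lefschetz trace formula the image lies in $\calR_1 \cap W(\bbZ)$.

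The key step is to show that both stages of this factorization respect the pre-$\lambda$ structures. For $K_0(\Var/\bbF_q) \to W(\bbZ)$, the $\sigma$-operations are induced by symmetric powers of varieties and of sets respectively, and compatibility follows from the natural Frobenius-equivariant identification $(\Sym^n X)(\overline{\bbF_q}) = \Sym^n(X(\overline{\bbF_q}))$ that comes from the functor-of-points description of the GIT symmetric power. For $W(\bbZ) \hookrightarrow W(\bbC)$, the compatibility is contained in the commutative diagram (\ref{eqn:witt-diagram}): the permutation representation functor $S \mapsto \bbC[S]$ intertwines symmetric powers, $\bbC[\Sym^n S] = \Sym^n \bbC[S]$. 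The composed pre-$\lambda$-ring homomorphism lands in $\calR_1 \cap W(\bbZ)$, and because $\calR_1$ is a sub-pre-$\lambda$-ring of $W(\bbC)$ by \S\ref{subsection:lambda_and_adams}, we obtain the desired pre-$\lambda$-ring map $K_0(\Var/\bbF_q) \to \calR_1$.

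The main obstacle is the first compatibility: while $(\Sym^n X)(\overline{\bbF_q}) = \Sym^n(X(\overline{\bbF_q}))$ is straightforward when $X$ is quasi-projective, the pre-$\lambda$ structure on $K_0(\Var/\bbF_q)$ (Example \ref{example:Kapranov-pre-lambda}) requires extending $\Sym^n$ to arbitrary varieties, which is done via the stratification machinery recalled in \S\ref{section:configurations}. One must check that this extension remains compatible with the passage to $\overline{\bbF_q}$-points; once that is in place, the rest of the argument is formal bookkeeping within the framework of \S\ref{subsect:witt_ring_rational}--\S\ref{subsection:lambda_and_adams}.
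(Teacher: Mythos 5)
Your proposal is correct and follows essentially the same route as the paper: factoring the zeta measure through the functor $X \mapsto X(\overline{\bbF_q})$ with its Frobenius action into $K_0(\textbf{AlFin $\bbZ$-set}) = W(\bbZ)$, using the relations of the modified Grothendieck ring (bijections on points over algebraically closed fields) for well-definedness, observing that products and symmetric powers are preserved, and identifying the image with $Z_X(t)$ via closed points of degree $n$. The compatibility issue you flag at the end is treated at the same (brief) level of detail in the paper itself, which simply notes that symmetric powers are preserved by the functor.
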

\begin{proof} 
It suffices to prove the same with $\calR_1$ replaced by $1+t\bbZ[[t]]=W(\bbZ)$, because the zeta function of any variety is contained in $\calR_1 \cap 1+t\bbZ[[t]] \subset 1+t\bbC[[t]] = W(\bbC).$  Then, we claim the zeta measure is induced by the functor 
\[ \Var/\bbF_q \rightarrow \textbf{AlFin $\bbZ$-set} \]
sending $X/\bbF_q$ to the set $X(\overline{\bbF_q})$ with the action of $\mathrm{Frob}_q^{\bbZ}$.

It is clear that this induces a map of abelian groups on $K_0$: indeed, the functor factors through the localization of $\Var/\bbF_q$ at radicial surjective maps (those maps which induce bijections on points over algebraically closed fields), and then applying the naive $K_0$ to both sides gives the desired map ---  here the naive $K_0$ is the free group on isomorphism classes modulo turning finite disjoint unions into sums, and it is shown in \cite{bilu-howe} that the localization combined with naive $K_0$ on the left recovers $K_0(\Var/\bbF_q)$. Since products and symmetric powers are preserved by the functor, we find that this is furthermore a map of pre-$\lambda$ rings. 

The orbits of length $n$ in $X(\overline{\bbF_q})$ correspond to closed points of degree $n$ in $X$, and thus by definition we have 
\[ Z_X(t)=Z_{X(\overline{\bbF_q})}(t) \]
where the right-hand side is the assignment defined in \ref{sss:big-witt} inducing
\[ K_0(\textbf{AlFin $\bbZ$-set})=1+t\bbZ[[t]],\]
and we conclude. 
\end{proof}

\begin{remark} The element $\LL\in K_0(\Var/\bbF_q)$ gets sent to $\frac{1}{1-qt}$, which thanks to the way we chose our normalizations has associated element $[q]\in \bbZ[\bbC^\times]$ and therefore Hadamard norm $q$. Since $[q]$ is invertible for Witt multiplication in $\calR_1$ with inverse $[q^{-1}]$, we also see that the zeta measure induces a ring morphism
$\calM_{\bbF_q} \to  \calR_1.$
\end{remark}

\begin{remark} Let $X/\bbF_q$ be a variety. Then the different (semi-)norms introduced in Section~\ref{subsect.topologies} are expressed in the following way when applied to $Z_X(q^{-N}t)$ for some $N\geq 1$: For every $j\geq 1$, 
$||Z_{X}(q^{-N}t)||_{j} = q^{-Nj}|X(\bbF_{q^j})|$. Moreover, by Deligne's results on weights, we have 
$$||Z_{X}(q^{-N}t)||_{\infty} = q^{\dim X-N},$$
and 
 $$||Z_{X}(q^{-N}t)||_H  \leq q^{-N}\sum_{i= 0}^{2\dim X}  \dim_{\bbQ_{\ell}}H^{i}_c(X_{\bar{\bbF}_q}, \bbQ_{\ell})|q|^{\frac{i}{2}}, \; [\gcd(\ell,q)=1]$$
 with equality if $X$ is smooth projective, in which case compactly supported $\ell$-adic cohomology can also be replaced with regular $\ell$-adic cohomology.
\end{remark}

\begin{example}
It is straightforward to cook up sequences of $a_n \in \calM_{\bbF_q}$ that seemingly violate our meta-conjecture, i.e. whose zeta measures converge in the point counting and weight topologies to the same $f \in \calH_1$ but not in the Hadamard topology.
For example, take
\[a_n = (n!) \bbL^{-2n}\prod_{i=1}^n (\bbL - q^i) \in \calM_{\bbF_q}. \]
Then, $||Z_{a_n}||_\infty \rightarrow 0$ and $||Z_{a_n}||_j \rightarrow 0$ for all $j\geq 1$. Thus, in both the weight and point-counting topologies, $Z_{a_n} \rightarrow 1$, which is certainly a Hadamard function! On the other hand, 
\[  ||Z_{a_n}||_H \geq \frac{n!}{q^{n}} \]
so the sequence does not converge in the Hadamard topology. 

However we maintain that this is not a \emph{natural} sequence to consider. We refer the reader to \cref{remark:hadamard-non-convergence} for an example of a natural sequence where a different issue occurs --- there we find a sequence that converges in both the point-counting and weight topologies, but \emph{not} to a Hadamard function, so that the two limits cannot even be compared and our meta-conjecture does not apply.  
\end{example}

\subsection{The Kapranov zeta function and its special values}
As explained in the introduction, special values of Kapranov's zeta function often appear as limits in natural motivic statistics questions.

Let $X$ be a quasi-projective variety over $\bbF_{q}$, and consider the series
\begin{equation}\label{eqn.kap-zeta-series} Z^{\mathrm{Kap}}_{X,\mathrm{zeta}}(s) = 1 + Z_X(t) s + Z_{\Sym^2 X}(t) s^2 + \cdots \in 1 + s\calR_1[[s]]. \end{equation}
obtained from the usual Kapranov zeta function by applying the zeta measure.

\begin{remark}
Recall that a rational function $f\in\calR_1$ has for every $i\geq 1$ a ghost coordinate $g_i(f)$ given by the coefficient of $t^{i-1}$ in $d\log f$. As can be verified either from the series or rational function expansion of $Z^\Kap_{X,\mathrm{zeta}}(s)$, applying the $i$th ghost coordinate map~$g_i$ to each coefficient, we obtain
\[ g_i(Z_{X,\mathrm{zeta}}^\Kap(s))=Z_{X_{\bbF_{q^k}}}(s). \]
Thus, one way to think of the Kapranov zeta function of $X/\bbF_q$ is as working simultaneously with the Hasse-Weil zeta functions of the base changes of $X$ to all finite extensions of $\bbF_q$.
\end{remark}

By Proposition~\ref{prop-zeta-measure-lambda-ring}, the zeta measure is a map of pre-$\lambda$ rings,  and therefore
\[ Z^{\mathrm{Kap}}_{X,\mathrm{zeta}}(s) = \sigma_s(Z_X(t)).\]
It follows from the formula for the $\sigma$-operations on $\calR_1$ that for \emph{any} rational function $f \in \calR_1$, if we factorize
\[ f=\frac{ \prod (1-t a_i)}{\prod(1-t b_i)} \]
then
\begin{equation}\label{eqn.sigma-rational}  \sigma_s(f)= \frac{\prod( 1- s[a_i])}{\prod (1-s[b_i])} .\end{equation}
In particular, combined with Deligne's results on weights we obtain the following proposition that will be crucial in \S\ref{section:zero-cycles}:

\begin{proposition}\label{prop:kapranov-zeta-rational}
If $X/\bbF_q$ is irreducible, then 
\[ Z^{\Kap}_{X,\mathrm{zeta}}(s)\in 1 + s\calR_1[[s]] \]
is ``a rational function with smallest pole or zero given by a simple pole at $[q^{-\dim X}]$", that is,
\[  (1-[q^{\dim X}]s)Z^{\Kap}_{X,\mathrm{zeta}}(s) = \frac{\prod( 1- s[a_i])}{\prod (1-s[b_i])} \]
where the products are finite and $|a_i|, |b_i| \leq q^{\dim X - 1/2}$.
 \end{proposition}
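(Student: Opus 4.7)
The plan is to combine the Grothendieck--Lefschetz trace formula, Deligne's bounds on weights, and formula (\ref{eqn.sigma-rational}) (which already does all the hard algebraic work, reducing $\sigma_s$ on a rational function to a ratio of factors of the form $(1-s[a])$).

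First I would write $Z_X(t)$ as a rational function using the Grothendieck--Lefschetz trace formula for compactly supported $\ell$-adic cohomology:
\[ Z_X(t) = \prod_{i=0}^{2\dim X} \det\left(1 - t\,\mathrm{Frob}_q \mid H^{i}_c(X_{\bar{\bbF}_q}, \bbQ_\ell)\right)^{(-1)^{i+1}}. \]
Expanding out, this yields a factorization $Z_X(t) = \prod(1-\alpha_i t)/\prod(1-\beta_j t)$ in which the $\alpha_i, \beta_j$ are Frobenius eigenvalues on odd- and even-degree compactly supported cohomology respectively (after any cancellations).

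Next, I would invoke Deligne's theorem: the Frobenius eigenvalues on $H^i_c(X_{\bar{\bbF}_q}, \bbQ_\ell)$ have complex absolute value at most $q^{i/2}$. For an irreducible variety $X$ of dimension $d = \dim X$, the top group $H^{2d}_c(X_{\bar{\bbF}_q}, \bbQ_\ell)$ is one-dimensional and isomorphic to $\bbQ_\ell(-d)$, so Frobenius acts there by the scalar $q^d$; all other contributions come from $H^i_c$ with $i \leq 2d-1$, hence Frobenius eigenvalues of absolute value $\leq q^{d-1/2}$. Therefore, up to a unique factor $(1-q^{d}t)$ appearing in the denominator, all remaining $\alpha_i$ and $\beta_j$ in the factorization of $Z_X(t)$ satisfy $|\alpha_i|, |\beta_j| \leq q^{d-1/2}$.

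Now I would apply (\ref{eqn.sigma-rational}) to compute
\[ Z^{\Kap}_{X,\mathrm{zeta}}(s) = \sigma_s(Z_X(t)) = \frac{\prod(1-s[\alpha_i])}{\prod(1-s[\beta_j])} \in 1 + s\calR_1[[s]], \]
where the products are finite. Multiplying both sides by $(1-[q^{\dim X}] s)$ cancels precisely the factor of the denominator corresponding to the top-cohomology eigenvalue $q^d$, leaving a finite ratio $\prod(1-s[a_i])/\prod(1-s[b_i])$ whose remaining $a_i, b_i$ satisfy the claimed bound $|a_i|, |b_i| \leq q^{\dim X - 1/2}$, as required.

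There is no real obstacle here beyond bookkeeping: the algebraic identity (\ref{eqn.sigma-rational}) handles the transfer from rational functions in $t$ to elements of $\calR_1[[s]]$, and the only input from algebraic geometry is the combination of rationality of $Z_X(t)$, the one-dimensionality and explicit Frobenius eigenvalue on top compactly supported cohomology of an irreducible variety, and Deligne's weight bounds on the remaining cohomology. The one subtle point worth verifying is that the factor $(1-q^d t)$ actually survives all cancellations between numerator and denominator in the factorization of $Z_X(t)$, which is ensured by the fact that $q^d$ has strictly larger absolute value than any other Frobenius eigenvalue appearing.
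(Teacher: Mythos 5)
Your argument is correct and follows essentially the same route as the paper: the paper also obtains the result by writing $Z^{\Kap}_{X,\mathrm{zeta}}(s)=\sigma_s(Z_X(t))$, applying the explicit formula (\ref{eqn.sigma-rational}) for $\sigma_s$ of a rational function, and invoking Deligne's weight bounds together with the simple pole of $Z_X(t)$ at $q^{-\dim X}$ coming from top-degree compactly supported cohomology. Your added remark that the factor $(1-q^{\dim X}t)$ cannot cancel (since $q^{\dim X}$ strictly exceeds all other eigenvalue absolute values, which implicitly uses that $X$ is geometrically irreducible, the same reading the paper intends) is exactly the bookkeeping the paper leaves to the reader.
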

 
In particular, this gives a way to make sense of special value in $\calH_1$ by a simple evaluation for any values of $s$ such that the denominator is invertible in $\calH_1$. The significance of knowing the locations of the smallest pole or zero is that it allows us to control the convergence of the corresponding \emph{series} expansion, which is what will come up naturally in our applications.

\section{Hadamard stabilization for effective zero-cycles}
\label{section:zero-cycles}
In this section we investigate Hadamard convergence for sequences of motivic densities arising from prescribing a set of allowable labels for effective zero cycles. After setting up the notation and problem, in Theorem~\ref{theorem:convergence-criterion} we give a general condition that guarantees these densities exist.  As immediate corollaries we obtain stabilization for the Hodge measure over $\bbC$ (Corollary~\ref{corollary:general-pattern-hodge-weight}) and for the weight topology on zeta functions (Corollary~\ref{corollary:general-pattern-zeta-weight}).  For $X$  stably rational, these results  holds already in the dimension topology on the Grothendieck ring. The proof of Theorem~\ref{theorem:convergence-criterion} is based on the generating function argument used by Vakil-Wood \cite{vakil-wood} in their study of motivic densities of configuration spaces.

We then apply our convergence criterion to obtain Hadamard convergence in various more specific settings. In \S\ref{subsec:first-app-pattern-avoiding}-\ref{sub.pattern-avoiding-II}, we consider \emph{pattern-avoiding} zero-cycles, generalizing spaces considered by Farb-Wolfson-Wood \cite{farb-wolfson-wood} and by Bourqui \cite{bourqui}. In particular, we obtain Hadamard convergence for the spaces considered in \cite{farb-wolfson-wood} as well as simpler proofs of some results of loc. cit. (cf. \S\ref{sect:orthogonal_patterns}),  and we also clarify some results of \cite{bourqui} (cf. Remark~\ref{remark:bourqui}). 

In \S\ref{subsec:second-app-conf-finite}, we consider densities with a finite set of allowable labels. The main case of interest is the universal one, corresponding to configuration spaces with a fixed set of $k$ labels. Here the behavior is more delicate: if $q^{\dim X}>k$, then we obtain Hadamard convergence, but otherwise we obtain natural examples that converge in both the weight and point counting topology, but \emph{not} to a Hadamard function.

\subsection{Notation, examples, and a general density problem}
We consider the monoid $\bbZ_{\geq 0}^k$. For a subset $A \subset \bbZ_{\geq 0}^k\setminus\{0\}$ (our set of \textit{allowable labels}) and $\vec{d} \in \bbZ_{\geq 0}^k$,  we write $\zerocycle{d}{A}$ for the configuration space of points in $X$ with labels in $A$ summing to $\vec{d}$. In other words, points in $\zerocycle{d}{A}$ can be thought of as finite formal sums $\sum_{x\in X} \vec{a}_x x$ such that $\sum_{x} \vec{a}_x = \vec{d}$. Each point of $\zerocycle{d}{A}$ thus determines a partition of $\vec{d}$ into elements of $A$, and there is a natural decomposition of $\zerocycle{d}{A}$ as a disjoint union of configuration spaces over such partitions: 
\begin{equation}\label{equation:zerocycledecomp} \zerocycle{d}{A}= \bigsqcup_{\lambda \vdash \vec{d} } C^{\lambda} X. \end{equation}
In terms of generating functions, by the definition of motivic Euler products and the decomposition (\ref{equation:zerocycledecomp}), in $\calM_K$ we have the identity 
\[ \sum_{\vec{d} \in \bbZ_{\geq 0}^k} [\zerocycle{d}{A}] \bt^{\vec{d}} = \prod_{x \in  X} \left( 1 + \sum_{\vec{a} \in A} \bt^{\vec{a}} \right) .  \]
\begin{example}\hfill \label{example:pattern-generating-functions}
\begin{enumerate}\newcommand{\bs}{\backslash}
\item Taking $A = \bbZ^{k}_{\geq 0} \setminus \{0\}$, we obtain
\[ [\zerocycle{d}{\bbZ_{\geq0}^k \setminus \{0\}}] = [\Sym^{\vec{d}}X] = [\Sym^{d_1}X \times \Sym^{d_2} X \times \cdots \times \Sym^{d_k} X] \]
and
\begin{align*}\sum_{\vec{d} \in \bbZ_{\geq 0}^k} [\zerocycle{d}{A}] \bt^{\vec{d}} &= \prod_{x \in X} \left( 1 + \sum_{\vec{a} \in A} \bt^{\vec{a}}\right) \\& = \prod_{x \in X}  \frac{1}{(1-t_1)(1-t_2)\ldots(1-t_k)}\\& = Z_X^{\Kap}(t_1) Z_X^{\Kap}(t_2) \cdots Z_X^{\Kap}(t_k). \end{align*}
In general, we may think of $\zerocycle{d}{A}$ as a constructible subset of $\Sym^{\vec{d}} X$, i.e. as a parameter space of $k$-tuples of effective zero cycles on $X$.

\item If $A$ is the collection of standard basis vectors $\vec{e}_i, 1 \leq i \leq k$, then
\[ \zerocycle{d}{A} = \Conf^{\vec{d}}(X),\]
the colored configuration space of $\sum d_i$ points in $X$ with $k$ colors and $d_i$ points of color $i$. The generating function is then
\[ \sum_{\vec{d} \in \bbZ_{\geq 0}^k} [\zerocycle{d}{A}] \bt^{\vec{d}} = \prod_{x \in X} (1 + t_1 + t_2 + \cdots + t_k). \]

\item Taking $A$ to be the complement of $\bbZ_{\geq 0}^k + (m,m,\ldots,m)$ in $\bbZ_{\geq 0}^k\setminus \{0\}$, we find that $[\zerocycle{d}{A}]$ equals the class of $k$-tuples of effective zero cycles on $X$ that overlap in a zero cycle with multiplicities less than $m$. We then have
\begin{align*} \sum_{\vec{d} \in \bbZ_{\geq 0}^k} [\zerocycle{d}{A}] \bt^{\vec{d}} &= \prod_{x \in X} \left( 1 + \sum_{\vec{a} \in A} \bt^{\vec{a}}\right) \\&= \prod_{x \in X} \frac{(1- (t_1 t_2\cdots t_k)^m)}{(1-t_1)(1-t_2)\cdots(1-t_k)}. \\
&= \frac{Z_X^{\Kap}(t_1)Z_X^{\Kap}(t_2)\cdots Z_X^{\Kap}(t_k)}{Z_X^{\Kap}\left((t_1 t_2\cdots t_k)^m\right)}. \end{align*}
\end{enumerate}
\end{example}

\subsubsection{A density problem}
Recall that a  \textit{motivic measure} is a ring morphism $\phi: \calM_K\to R$ valued in some ring $R$. We will often write $a_\phi$ for $\phi(a)$.

If $A \subset B$, then $\zerocycle{d}{A} \subset \zerocycle{d}{B}$, and for suitable motivic measures $\phi$, it is natural to consider the asymptotic density
\begin{equation}\label{equation:zero-cycle-density} \lim_{d \rightarrow \infty} \frac{[\zerocycle{d}{A}]_\phi}{[\zerocycle{d}{B}]_\phi} \end{equation}
where here $\vec{d}\rightarrow \infty$ means each entry is going to $\infty$.

We restrict here to the case where $A$ contains the standard basis vectors $\vec{e}_i$, $1 \leq i \leq k$; this ensures that $\zerocycle{d}{A}$ has dimension $\dim X \cdot \sum \vec{d}$, and includes the cases of Example \ref{example:pattern-generating-functions}. It is then natural to take $B=\bbZ^k_{\geq 0}\setminus\{0\}$, so that we are studying densities in the full motivic probability space of $k$-tuples of effective zero cycles. In this setup, we give in Theorem~\ref{theorem:convergence-criterion} below a condition on $X$, $A$, and $\phi$ that guarantees the limit (\ref{equation:zero-cycle-density}) exists, and, moreover identifies the limit as a special value of a power series expressed as a motivic Euler product. Before establishing this criterion, we give some basic notation for discussing convergence of power series with coefficients in $\calM_K$. 

\subsection{Normed motivic measures and convergence of series} 

\subsubsection{Normed motivic measures} 
We call a motivic measure $\phi:\calM_K\to R$ normed if $R$ is complete for a sub-multiplicative norm $|| \cdot ||$ and $||\LL_{\phi}^{-1}||< 1$. We have in mind especially the cases:
\begin{enumerate}
\item $K=\bbF_q$ and $\phi$ the zeta measure	 to the completion of $\calR_1$ for either the weight or Hadamard topologies. 
\item $K$ arbitrary and $\phi$ the map $\calM_K \rightarrow \widehat{\calM}_K$ to the completion for the dimension topology. In this case, we fix the norm to be $|a|=2^{\dim a}$ where we define $\dim a:=\inf\{d\in \bbZ, a\in \mathrm{Fil}_d\widehat{\calM}_K\}$.
\item $K=\bbC$ and $\phi$ the Hodge measure, with a norm defined similarly using the weight filtration on $\widehat{K_0(\HS)}$. 
\end{enumerate}
Note that on $\calM_{\bbF_q}$ we can also access the point-counting topology for the zeta measure through this setup by treating each ghost coordinate individually, i.e. by considering for each $k$ the measure induced by $X \mapsto |X(\bbF_{q^k})|$ as a $\bbC$-valued measure. 

\subsubsection{Absolute convergence and radius of convergence} The following is completely elementary, but it will be useful to spell it out clearly before we start manipulating values of convergent power series for normed motivic measures. 

If $R$ is complete for a norm $||\cdot||$, then we say a series 
\[ \sum_{\vec{d} \in \bbZ^{k}_{\geq 0}}  r_{\vec{d}} \]
for $r_{\vec{d}} \in R$ \emph{converges absolutely} if the series of real numbers $\sum ||r_{\vec{d}}||$ converges, i.e. if the limit of partial sums converges. An absolutely convergent series converges and its limit is independent of reordering the terms; moreover, the Cauchy product of two absolutely convergent series is absolutely convergent and its limit is the product of the limits of the two series. 

Given a power series 
\[ f(\bt)=\sum_{\vec{d} \in \bbZ^{k}_{\geq 0}}  a_{\vec{d}}\, \bt^{\vec{d}} \in R[[t_1, \ldots, t_k]], \]
the \emph{radius of convergence} of $f(\bt)$ is 
\[ \rho(f) = \frac{1}{\limsup_{\vec{d}} ||a_{\vec{d}}||^{1/\sum\vec{d}}} \in [0,+\infty]. \]
If $s_1, \ldots, s_k \in R$ are such that $||s_i|| < \rho(f)$ for all $i$, then the series obtained by formally substituting $s_i$ for $t_i$ converges absolutely, and we write $f(s_1, \ldots, s_k)$ for its value in $R$. 

If $f(\bt)$ and $g(\bt)$ both have radius of convergence $\geq \rho_0$, then so does the formal power series $h(\bt)=f(\bt)g(\bt)$, and $h(s_1, \ldots, s_k)=f(s_1, \ldots, s_k)g(s_1, \ldots, s_k)$ for $||s_i||<\rho_0.$ In particular, we highlight the following point: if $f(\bt)$ has invertible constant coefficient then it admits a formal inverse $\frac{1}{f}(\bt) \in R[[t_1, \ldots, t_k]].$ If both $f$ and $1/f$ have radius of convergence $\geq \rho_0$ and $||s_i|| < \rho_0$, then we find 
\[ f(s_1, \ldots, s_k) \cdot \frac{1}{f}(s_1, \ldots, s_k) = 1, \textrm{ so } \frac{1}{f}(s_1, \ldots, s_k)=\frac{1}{f(s_1, \ldots, s_k)}. \] 

\subsubsection{A useful lemma}
We establish a useful convergence lemma for the dimensional topology. Before stating it, recall that a motivic Euler product is \emph{by definition} a power series with coefficients in $\calM_K$ -- that is, the product symbol is only a notationally convenient way of defining a series. In particular, when discussing convergence of a motivic Euler product, one is always discussing convergence of a series -- there is indeed no other way that the convergence can be interpreted. 

\begin{lemma}\label{lemma:euler-product-convergence} Let $1 + \sum_{i\geq 2}a_i T^i \in \bbZ[[T]]$ be a power series with no term of degree~1. Then the motivic Euler product
$$f(t)=\prod_{x\in X} \left(1 + \sum_{i\geq 2} a_i t^i\right),$$
viewed as a power series with coeficients in $\widehat{\calM}_K$, has radius of convergence ${\geq || \LL^{-\frac{\dim X}{2}} ||.} $
Moreover, for $|s| < \LL^{-\frac{\dim X}{2}}$, $f(s)$ is an invertible element of $\widehat{\calM}_K$.
\end{lemma}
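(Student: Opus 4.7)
The plan is to exploit the power structure interpretation of the motivic Euler product. Setting $g(t) = 1 + \sum_{i \geq 2} a_i t^i$, we have $f(t) = g(t)^{[X]}$, and unpacking the definition of motivic Euler products gives
\[
f(t) = 1 + \sum_{n \geq 2}\left(\sum_{\substack{\lambda \vdash n \\ \lambda(1)=0}} c_\lambda [C^\lambda X]\right) t^n,
\]
where the inner sum is over partitions of $n$ with no part equal to $1$ (partitions containing a part $1$ contribute $0$ since $g$ has no linear coefficient), and the $c_\lambda \in \bbZ$ are integer coefficients coming from the relative $\sigma$-operations applied to the integer classes $a_i$.

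The key input is a dimension bound for generalized configuration spaces: if $\lambda$ is a partition of $n$ with every part at least $2$, then the number of parts satisfies $\ell(\lambda) \leq n/2$ and hence
\[
\dim C^\lambda X \;=\; \ell(\lambda) \cdot \dim X \;\leq\; (n/2)\dim X.
\]
Since the integer coefficients $c_\lambda$ lie in filtration degree $\leq 0$ and the sum over $\lambda \vdash n$ has at most $p(n)$ terms (subexponential in $n$), the coefficient $[t^n] f(t) \in \widehat{\calM}_K$ has dimension at most $(n/2)\dim X$, giving $||[t^n]f|| \leq p(n)\cdot 2^{(n/2)\dim X}$. Taking $n$-th roots and $\limsup$ yields $\rho(f) \geq 2^{-\dim X/2} = || \LL^{-\dim X/2} ||$.

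For the invertibility assertion, I would observe that the formal power-series inverse of $g$ also has vanishing linear coefficient, so $1/g(t) = 1 + \sum_{i \geq 2} b_i t^i$ for some $b_i \in \bbZ$. By multiplicativity of the power structure (equivalently, of motivic Euler products), $1/f(t) = (1/g(t))^{[X]}$ as formal power series, so the same argument gives $\rho(1/f) \geq || \LL^{-\dim X/2} ||$. For any $|s| < || \LL^{-\dim X/2} ||$, both $f(s)$ and $(1/f)(s)$ then converge absolutely in $\widehat{\calM}_K$, and substituting into the formal identity $f(t)\cdot (1/f)(t)=1$ gives $f(s)\cdot (1/f)(s) = 1$, exhibiting $f(s)$ as a unit.

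The only real conceptual point is the dimension bound, which uses crucially the hypothesis that $g$ has no linear term: with a linear coefficient, partitions $(1^n)$ would contribute a term of dimension $n\cdot \dim X$, and the exponent in the radius of convergence would double, matching what one obtains unconditionally for the Kapranov zeta function $Z_X^{\Kap}$ itself.
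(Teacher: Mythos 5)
Your proof is correct and follows essentially the same route as the paper's: bound the dimension of the contribution of each partition of $n$ using the absence of a linear term (all parts are of size at least $2$, so the number of points is at most $n/2$ and the contribution has dimension at most $\tfrac{n}{2}\dim X$), and then deduce invertibility by applying the same bound to the motivic Euler product of the formal inverse, which again has no linear term, and multiplying the two absolutely convergent values. The only cosmetic imprecision is that the coefficient of $t^n$ is a sum of classes $C^\lambda_X\left((a_i)\right)$ living \emph{over} the configuration spaces $C^\lambda X$ rather than literal integer multiples $c_\lambda [C^\lambda X]$; since these classes have relative dimension at most $0$ over $C^\lambda X$, the dimension estimate you rely on is unaffected.
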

\begin{proof} Because the formal inverse 
\[ \frac{1}{1 + \sum_{i\geq 2}a_i T^i} \in \bbZ[[T]] \]
also satisfies the hypotheses of theorem and 
\[ 1/f(t) = \prod_{x \in X} \frac{1}{1 + \sum_{i\geq 2}a_i T^i}, \]
the statement about invertibility of the value $f(s)$ will follow once we have established the claim about the radius of convergence. 

To compute the radius of convergence, we note that the term of degree $n$ of the expansion of this motivic Euler product is a sum over partitions of $n$. Because $a_1 = 0$, the contribution of each partition $(n_i)$ such that $\sum i n_i = n$ is bounded in dimension by
$$\sum_{i\geq 2} n_i \dim X\leq \frac12 \sum_{i\geq 1} in_i \dim X = \frac{n\dim X}{2}.$$
The result then follows immediately from the formula for the radius of convergence. 
\end{proof}

\subsection{Weak rationality and a convergence criterion}

Given a power series $f(\bt)$ with coefficients in $\calM_{K}$ and a normed motivic measure $\phi$, the $\phi$-radius of convergence of $f$ is the radius of convergence of the power series obtained by applying $\phi$ to the coefficients of $f$. 

\begin{definition}For $X$ an irreducible $K$-variety, we say $Z_X^{\Kap}(t)$ is \emph{weakly rational} for a normed motivic measure $\phi$ if the power series $(1-\bbL^{\dim X}t)Z_X^{\Kap}(t)$ and its inverse both have $\phi$-radius of convergence strictly larger than $||\bbL^{-\dim X}_{\phi} ||$. 
\end{definition}

For $K=\bbF_q$ and $\phi$ the point-counting measure to $\bbC$, weak rationality follows from Deligne's results on weights -- indeed, the smallest zero or pole of the rational function $Z_X(t)$ is a pole of multiplicity one at $q^{-\dim X}$. For our applications, what we will need is precisely the convergence of the series expressions obtained after removing this pole, and this motivates our definition of weak rationality.

We note that closely related conditions have previously been considered in the literature. In particular, weak rationality is stronger than the motivic stabilization of symmetric powers (MSSP) of \cite{vakil-wood} and the extension $\textrm{MSSP}^*$ of \cite{howe:mrv1}.  In practice, however, weak rationality holds whenever MSSP is known, in particular:
\begin{enumerate}
\item if $X$ is stably rational then $Z_X^{\Kap}(t)$ is weakly rational for the map to the completed Grothendieck ring $\widehat{\calM}_K$,
\item for any $X/\bbF_q$, $Z_X^{\Kap}(t)$ is weakly rational for the zeta measure to $\calH_1$ by Proposition~\ref{prop:kapranov-zeta-rational}, and
\item for any $X/\bbC$, $Z_X^{\Kap}(t)$ is weakly rational for the Hodge measure.
\end{enumerate}

\subsubsection{A convergence criterion}
Adapting the strategy used to study the motivic densities of configuration spaces in \cite{vakil-wood}, we find
\begin{theorem}\label{theorem:convergence-criterion}
If $Z_X^{\Kap}(t)$ is weakly rational for $\phi$ and the power series
\begin{equation}\label{equation:convergence-criterion} \frac{\sum_{\vec{d} \in \bbZ_{\geq 0}^k} [\zerocycle{d}{A}] \bt^{\vec{d}} }{Z_X^{\Kap}(t_1)Z_X^{\Kap}(t_2)\cdots Z_X^{\Kap}(t_k)} = \prod_{x \in X} (1-t_1)(1-t_2) \cdots (1-t_k) \left(1 + \sum_{\vec{a} \in A} \bt^{\vec{a}} \right) \end{equation}
converges absolutely at $t_1=t_2=\cdots=t_k=\bbL^{-\dim X}_{\phi}$ to a value $\zeta$, then
\[ \lim_{\vec{d}\rightarrow \infty} \frac{[\zerocycle{d}{A}]_\phi}{[\Sym^{\vec{d}}X]_\phi} = \zeta. \]
\end{theorem}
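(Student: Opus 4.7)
The plan is to follow the generating function strategy of Vakil--Wood. Write the right-hand side of \eqref{equation:convergence-criterion} as $F(\bt) = \sum_{\vec f \in \bbZ_{\geq 0}^k} f_{\vec f}\, \bt^{\vec f}$ with $f_{\vec f} \in \calM_K$, so that the identity $\sum_{\vec d}[\zerocycle{d}{A}]\bt^{\vec d} = \left(\prod_i Z_X^{\Kap}(t_i)\right) F(\bt)$, after applying $\phi$ and extracting the coefficient of $\bt^{\vec d}$, yields the convolution
\[ [\zerocycle{d}{A}]_\phi \;=\; \sum_{\vec f \leq \vec d} \left(\prod_{i=1}^k [\Sym^{d_i - f_i} X]_\phi\right) f_{\vec f, \phi}. \]
Setting $L := \bbL^{\dim X}_\phi$ and $\beta_d := L^{-d}[\Sym^d X]_\phi$, and dividing by $[\Sym^{\vec d} X]_\phi = \prod_i [\Sym^{d_i}X]_\phi$, the desired ratio becomes
\[ \frac{[\zerocycle{d}{A}]_\phi}{[\Sym^{\vec d}X]_\phi} \;=\; \sum_{\vec f \leq \vec d} L^{-|\vec f|} f_{\vec f, \phi} \prod_{i=1}^k \frac{\beta_{d_i - f_i}}{\beta_{d_i}}, \qquad |\vec f| := \textstyle\sum_i f_i. \]
The goal is then to show this sum converges to $\zeta = \sum_{\vec f} L^{-|\vec f|} f_{\vec f,\phi}$ as $\vec d\to\infty$.

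Next I would use weak rationality to control the sequence $\beta_d$. Write $G(t) := (1-\bbL^{\dim X}t) Z_X^{\Kap}(t)$. By hypothesis, both $G_\phi(t)$ and its formal reciprocal $(1/G)_\phi(t)$ have $\phi$-radius of convergence strictly greater than $\|L^{-1}\|$, so in particular $G_\phi$ is invertible at $t=L^{-1}$. Expanding $Z_X^{\Kap}(t) = G(t)/(1-\bbL^{\dim X}t)$ as a geometric series shows $\beta_d = \sum_{i\leq d} L^{-i} G_{\phi,i}$, where $G_{\phi,i}$ denotes the $i$-th coefficient of $G_\phi$. Hence $\beta_d$ converges geometrically fast to $\alpha := G_\phi(L^{-1})$, which is invertible. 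Consequently $\|\beta_d\|$ is uniformly bounded in $d$, and $\|\beta_d^{-1}\|$ is uniformly bounded for $d$ sufficiently large; combining, there exist $D_0$ and $M > 0$ with $\|\beta_{d-f}/\beta_d\| \leq M$ for all $d \geq D_0$ and all $0\leq f \leq d$.

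I would then conclude by a dominated-convergence argument. Fix $\epsilon > 0$. Absolute convergence of $F_\phi$ at $(L^{-1},\ldots,L^{-1})$ (the hypothesis) yields $N$ with $\sum_{|\vec f|>N}\|L^{-|\vec f|} f_{\vec f,\phi}\| < \epsilon$. For $\vec d$ with $\min_i d_i \geq D_0$, the uniform bound above gives
\[ \left\| \sum_{\substack{\vec f \leq \vec d \\ |\vec f|>N}} L^{-|\vec f|} f_{\vec f,\phi} \prod_i \frac{\beta_{d_i-f_i}}{\beta_{d_i}}\right\| < M^k \epsilon; \]
meanwhile, for each fixed $\vec f$ the product $\prod_i \beta_{d_i-f_i}/\beta_{d_i}$ tends to $1$ as $\vec d\to\infty$, so the finite head sum over $|\vec f|\leq N$ tends to $\sum_{|\vec f|\leq N} L^{-|\vec f|} f_{\vec f,\phi}$, which is within $\epsilon$ of $\zeta$. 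Letting $\epsilon\to 0$ gives the result. The main obstacle is the uniform control on the ratios $\beta_{d-f}/\beta_d$: one needs both that $\beta_d$ is bounded (so the numerator does not blow up when $d-f$ is small) and that $\beta_d$ is invertible with bounded inverse for large $d$ (so the denominator does not degenerate). Both facts are precisely what the weak rationality hypothesis is designed to deliver.
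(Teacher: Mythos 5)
Your argument is correct, and it rests on the same two pillars as the paper's proof (the Vakil--Wood normalization by $\bbL^{\dim X\cdot\sum \vec d}$, with weak rationality supplying that $\beta_d=\bbL_\phi^{-d\dim X}[\Sym^d X]_\phi$ converges to an invertible limit, invertibility coming from the fact that $G_\phi$ and $(1/G)_\phi$ both converge at $\bbL_\phi^{-\dim X}$ so their values multiply to $1$), but the way you combine this with the absolute-convergence hypothesis is genuinely different. The paper never divides term by term: it multiplies the hypothesis series by $(1-\bbL^{\dim X}t_1)\cdots(1-\bbL^{\dim X}t_k)\,Z_X^{\Kap}(t_1)\cdots Z_X^{\Kap}(t_k)$, which by weak rationality converges absolutely at $(\bbL_\phi^{-\dim X},\ldots,\bbL_\phi^{-\dim X})$ to an invertible element; because multiplying by the factors $(1-\bbL^{\dim X}t_i)$ telescopes box partial sums into the single renormalized classes, absolute convergence of the Cauchy product immediately yields that both $[\Sym^{\vec d}X]_\phi\,\bbL_\phi^{-\dim X\sum\vec d}$ and $[\zerocycle{d}{A}]_\phi\,\bbL_\phi^{-\dim X\sum\vec d}$ converge, and the theorem follows by taking the quotient of the two limits. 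You instead divide first and control the convolution $\sum_{\vec f\le\vec d}L^{-|\vec f|}f_{\vec f,\phi}\prod_i\beta_{d_i-f_i}\beta_{d_i}^{-1}$ by a head/tail dominated-convergence estimate; this forces you to invert the individual $\beta_{d_i}$ and to establish the uniform bound on $\|\beta_{d-f}\beta_d^{-1}\|$, for which you need the (standard, but worth stating) Neumann-series fact that in a complete sub-multiplicatively normed ring elements close to an invertible element are invertible with uniformly bounded inverse --- a step the paper's route avoids entirely, since it only ever uses invertibility of the single limiting value. What your version buys is explicitness: one sees exactly which ratios must stay bounded, that only $\min_i d_i$ large matters, and where each hypothesis enters quantitatively. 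What the paper's version buys is brevity and no $\epsilon$-management, everything being reduced to the two general facts about absolutely convergent series recorded in its preliminaries (values of Cauchy products multiply, and $f\cdot(1/f)$ evaluates to $1$ within the common radius of convergence). Note also that in both arguments the ratio $[\zerocycle{d}{A}]_\phi/[\Sym^{\vec d}X]_\phi$ only literally makes sense once $\min_i d_i$ is large enough that the $\beta_{d_i}$ are invertible, which your restriction to $\min_i d_i\ge D_0$ handles correctly.
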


\begin{proof} In what follows, we write $n = \dim X$.
By weak rationality,
\[
(1-\bbL^n t_1)\cdots(1-\bbL^nt_k)Z_X^{\Kap}(t_1)\cdots Z_X^{\Kap}(t_k) \]
converges absolutely at $\bt=(\bbL_\phi^{-n},\ldots,\bbL_\phi^{-n})$ to an invertible element. In particular, the sequence of partial sums $\frac{[\Sym^{\vec{d}} X]}{\bbL^{n \sum\vec{d}}}$ converges to an invertible element as $\vec{d}\rightarrow \infty.$

If the quotient power series
\[ \frac{\prod_{x \in  X}  (1 + \sum_{\vec{a} \in A} \bt^{\vec{a}} )}{Z_X^{\Kap}(t_1)Z_X^{\Kap}(t_2)\cdots Z_X^{\Kap}(t_k)} =  \prod_{x \in X} (1-t_1) \cdots (1-t_k) \left(1 + \sum_{\vec{a} \in A} \bt^{\vec{a}} \right)  \]
also converges absolutely at $\bt=(\bbL^{-n},\ldots,\bbL^{-n})$, then multiplying we find that
\[ (1-\bbL^nt_1)\cdots(1-\bbL^nt_k) \prod_{x \in X}\left(1 + \sum_{\vec{a} \in A} \bt^{\vec{a}} \right) \]
converges absolutely at $\bt=(\bbL^{-n},\ldots,\bbL^{-n})$. In particular, the sequence of partial sums $\frac{[\zerocycle{d}{A}]}{\bbL^{n \sum{\vec{d}}}}$  converges, and the quotient $\frac{[\zerocycle{d}{A}]}{[\Sym^{\vec{d}} X]}$ converges to the value of
\[  \prod_{x \in X} (1-t_1)(1-t_2) \cdots (1-t_k) \left(1 + \sum_{\vec{a} \in A} \bt^{\vec{a}} \right) \]
at $\bt=(\bbL^{-n}, \ldots, \bbL^{-n}).$

\end{proof}

\begin{corollary}\label{corollary:general-pattern-hodge-weight} For $X/\bbC$ irreducible,
\[ \lim_{\vec{d}\rightarrow \infty} \frac{[\zerocycle{d}{A}]_\HS}{[\Sym^{\vec{d}}X]_\HS} = \left. \left(\prod_{x \in X} (1-t_1)(1-t_2) \cdots (1-t_k) \left(1 + \sum_{\vec{a} \in A} \bt^{\vec{a}}\right) \right) \right|_{(\bbL^{-\dim X}, \bbL^{-\dim X},\ldots)}. \]
If $X$ is stably rational, then this holds already in $\widehat{\calM}_{\bbC}$.
\end{corollary}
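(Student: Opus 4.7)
The plan is to apply Theorem~\ref{theorem:convergence-criterion} directly, with $\phi$ taken to be either the Hodge measure or the map $\calM_{\bbC} \to \widehat{\calM}_{\bbC}$ in the stably rational case. Weak rationality of $Z_X^{\Kap}(t)$ for each of these measures is among the facts listed immediately before the theorem, so the only substantive work is to verify the absolute-convergence hypothesis (\ref{equation:convergence-criterion}) at the diagonal point $\bt=(\bbL^{-\dim X}_\phi,\ldots,\bbL^{-\dim X}_\phi)$.

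The key observation is that, because $A$ is assumed to contain each standard basis vector $\vec{e}_i$, the series $1+\sum_{\vec{a}\in A}\bt^{\vec{a}}$ contains each monomial $t_i$, and these terms exactly cancel the $-t_i$ arising in the expansion of $(1-t_1)\cdots(1-t_k)$. Consequently the local factor
\[
g(\bt) := (1-t_1)(1-t_2)\cdots(1-t_k)\left(1+\sum_{\vec{a}\in A}\bt^{\vec{a}}\right)
\]
has constant term $1$ and no monomials of total degree one.

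With this in hand I would invoke a multivariable analog of Lemma~\ref{lemma:euler-product-convergence}, proved by exactly the same dimension-counting argument: the coefficient of $\bt^{\vec{n}}$ in the Euler product $\prod_{x\in X} g(\bt)$ is a sum over partitions of $\vec{n}$ into vectors of total degree at least $2$, so each such partition uses at most $\tfrac{|\vec{n}|\dim X}{2}$ of the dimension budget. This gives a polyradius of convergence bounded below by $\|\bbL^{-\dim X/2}_\phi\|$ in each variable. Both for the Hodge measure (using the weight filtration) and for the dimensional filtration, one has $\|\bbL^{-\dim X}_\phi\|<\|\bbL^{-\dim X/2}_\phi\|$, so the diagonal point $(\bbL^{-\dim X}_\phi,\ldots,\bbL^{-\dim X}_\phi)$ lies strictly inside the polydisc of absolute convergence.

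Applying Theorem~\ref{theorem:convergence-criterion} then yields the stated limit. I do not anticipate any genuine obstacle: the only mild point of care is spelling out the multivariable version of Lemma~\ref{lemma:euler-product-convergence}, but the proof is a verbatim adaptation of the single-variable argument, and once the linear terms cancel as above, the dimension bound goes through unchanged.
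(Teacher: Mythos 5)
Your proposal is correct and follows essentially the same route as the paper: the paper's proof likewise notes weak rationality of $Z_X^{\Kap}(t)$ for the Hodge measure (and for $\widehat{\calM}_{\bbC}$ when $X$ is stably rational), observes that the local factor has no degree-one terms, and invokes the dimension-counting estimate of Lemma~\ref{lemma:euler-product-convergence} to get absolute convergence at the diagonal point before applying Theorem~\ref{theorem:convergence-criterion}. The only cosmetic difference is that the paper checks convergence in the dimension topology and transports it to the Hodge measure, leaving the multivariable adaptation of the lemma implicit, whereas you argue with each norm directly.
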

\begin{proof} The Kapranov zeta function $Z_X^{\Kap}(t)$ is always weakly rational for the Hodge measure, and is weakly rational in $\widehat{\calM_{\bbC}}$ if $X$ is stably rational. On the other hand, since there are no degree 1 terms in \[ (1-t_1)(1-t_2) \cdots (1-t_k) \left(1 + \sum_{\vec{a} \in A} \bt^{\vec{a}}\right), \]
we can use Lemma~\ref{lemma:euler-product-convergence} to check that the motivic Euler product converges absolutely in the dimension topology on $\widehat{\calM}_{\bbC}$ and thus also for the weight topology after passing to the Hodge measure.
\end{proof}

Arguing similarly, we also obtain
\begin{corollary}\label{corollary:general-pattern-zeta-weight} For $X/\bbF_q$ irreducible,
\begin{multline*} \lim_{\vec{d} \rightarrow \infty} Z_{\zerocycle{d}{A}} /_W Z_{\Sym^{\vec{d}}(X)} = \\\left.\left(\prod_{x \in X} (1-t_1)(1-t_2) \cdots (1-t_k) \left(1 + \sum_{\vec{a} \in A} \bt^{\vec{a}}\right) \right) \right|_{([q^{-\dim X}], [q^{-\dim X}],\ldots)} \end{multline*}
in the weight topology. If $X$ is stably rational, this holds already in $\widehat{\calM}_{\bbF_q}$.
\end{corollary}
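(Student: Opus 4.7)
The plan is to mirror the proof of Corollary~\ref{corollary:general-pattern-hodge-weight}, applying Theorem~\ref{theorem:convergence-criterion} with the normed motivic measure $\phi$ taken to be the zeta measure into the weight completion $\widehat{\bbZ[\bbC^\times]}^{w}$ (or, in the stably rational case, the completion map $\calM_{\bbF_q}\to\widehat{\calM}_{\bbF_q}$). The two inputs that the theorem requires are absolute convergence of the ``local factor'' Euler product at the specialization point and weak rationality of $Z_X^{\Kap}(t)$; both can be extracted directly from tools already developed in the paper.

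First I would verify absolute convergence of
$$f(\bt) \;=\; \prod_{x \in X}(1-t_1)\cdots(1-t_k)\Bigl(1 + \sum_{\vec{a} \in A}\bt^{\vec{a}}\Bigr)$$
at $\bt=(\LL^{-\dim X},\ldots,\LL^{-\dim X})$ in the dimension topology on $\widehat{\calM}_{\bbF_q}$. The key observation is that the local factor has no degree-$1$ term: since $A \subset \bbZ^k_{\geq 0}\setminus\{0\}$, the only weight-$1$ elements available are the basis vectors $\vec{e}_i$, all of which lie in $A$ by hypothesis, and their contributions $t_1+\cdots+t_k$ cancel exactly the linear terms of $(1-t_1)\cdots(1-t_k)$. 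A multivariable analog of Lemma~\ref{lemma:euler-product-convergence} then applies: any partition $\lambda$ of $\vec{d}$ into weight-$\geq 2$ parts has at most $|\vec{d}|/2$ parts, so the $\bt^{\vec{d}}$ coefficient of $f$ has dimension at most $\tfrac12|\vec{d}|\dim X$, and summability at the specialization point follows once $\dim X \geq 1$ (the zero-dimensional case is trivial and can be handled directly).

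Next I would invoke Theorem~\ref{theorem:convergence-criterion} in two ways. For $X$ stably rational, $Z_X^{\Kap}(t)$ is weakly rational in $\widehat{\calM}_{\bbF_q}$ (one of the standard instances of weak rationality listed before the theorem), so the theorem immediately yields the conclusion in $\widehat{\calM}_{\bbF_q}$. For general $X/\bbF_q$, I would push the computation forward along the zeta measure $\calM_{\bbF_q}\to\widehat{\bbZ[\bbC^\times]}^{w}$. This map is continuous from the dimension topology to the weight topology via Deligne's bound $\|Z_Y(t)\|_\infty \leq q^{\dim Y}$ recorded earlier, so the absolute convergence of $f$ established above transfers to the weight topology after specialization. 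Weak rationality of $Z_X^{\Kap}(t)$ for the zeta measure is supplied by Proposition~\ref{prop:kapranov-zeta-rational}: both $(1-[q^{\dim X}]s)Z^{\Kap}_{X,\mathrm{zeta}}(s)$ and its formal inverse are finite products of factors $(1-s[c])$ with $|c|\leq q^{\dim X-1/2}$, hence have weight radius of convergence strictly greater than $q^{-\dim X} = \|\LL_\phi^{-\dim X}\|_\infty$. Theorem~\ref{theorem:convergence-criterion} then concludes.

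I do not anticipate a serious obstacle here: the whole argument is essentially a transcription of the Hodge-measure case. The one thing worth checking with care is the multivariable version of Lemma~\ref{lemma:euler-product-convergence}, but the dimension bound only depends on $|\vec{d}|$ and the number of $\vec{d}\in\bbZ^k_{\geq 0}$ with $|\vec{d}|=n$ grows polynomially in $n$, so absolute convergence persists. A brief sanity check is also needed that the zeta measure composed with the inclusion into $\widehat{\bbZ[\bbC^\times]}^{w}$ qualifies as a normed motivic measure, which reduces to the obvious inequality $\|\LL_\phi^{-1}\|_\infty = q^{-1} < 1$.
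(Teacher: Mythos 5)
Your proposal is correct and follows essentially the same route as the paper, which obtains this corollary by ``arguing similarly'' to Corollary~\ref{corollary:general-pattern-hodge-weight}: apply Theorem~\ref{theorem:convergence-criterion}, with absolute convergence of the linear-term-free Euler product supplied by (the multivariable use of) Lemma~\ref{lemma:euler-product-convergence} in the dimension topology and transferred along the zeta measure, and with weak rationality supplied by Proposition~\ref{prop:kapranov-zeta-rational} for general $X/\bbF_q$ and by stable rationality for the statement in $\widehat{\calM}_{\bbF_q}$. Nothing essential differs from the paper's argument.
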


\subsection{Pattern-avoiding zero-cyles I}\label{subsec:first-app-pattern-avoiding}
A natural generalization of the setup in Example~\ref{example:pattern-generating-functions}-(3) is given by the notion of pattern-avoiding zero cycles: we fix a subset $V$ of labels, and then take $A=A(V)$ to consist of every label not lying above $V$ (where a label $\vec{a}$ lies above $V$ if there is some $\vec{v} \in V$ such that $\vec{a}-\vec{v} \in \bbZ_{\geq 0}^k$). Thus, in this case 
$\zerocycle{d}{A}$ is the space $\calZ^{\vec{d}}_{V}(X)$ considered in \cref{subsub.intro-pattern-avoiding}. In order for $A$ to contain all basis vectors, we assume here that all the vectors in $V$ have norm at least 2.

\begin{example}\label{example:pattern-avoiding-examples} \hfill
\begin{enumerate}
\item $V=(m,m, \ldots, m)$ recovers Example~\ref{example:pattern-generating-functions}-(3).
\item Let $V = \{(1,2), (2,1)\}$. Then $\zerocycle{d}{A(V)}$ parameterizes pairs of effective zero-cycles $(C_1,C_2)\in \Sym^{d_1}(X)\times \Sym^{d_2}(X)$ such that we can write $C_1 = C'_1 + D$ and $C_2 = C'_2 + D$ where $C'_1$, $C'_2$ and $D$ have disjoint supports and $D$ is reduced (i.e. all points have multiplicity at most one).
\item Bourqui \cite[Section 3]{bourqui} studied spaces of ``intersection-avoiding" zero-cycles, which corresponds to requiring each vector in $V$ to have all of its coordinates equal to 0 or 1.
\end{enumerate}
\end{example}

\subsubsection{Orthogonal patterns}\label{sect:orthogonal_patterns}
Suppose we take $V$ to be a collection of orthogonal vectors of norm larger than one (generalizing Example~\ref{example:pattern-avoiding-examples}-(1)). Then, a straightforward computation shows
\[ \left(1 + \sum_{\vec{a} \in A} \bt^{\vec{a}} \right) = \left(\prod_{i=1}^k \frac{1}{1-t_i} \right) \prod_{\vec{v} \in V} (1 - \bt^{\vec{v}}). \]
So, in this case (\ref{equation:convergence-criterion}) simplifies to
\[ \prod_{\vec{v} \in V} Z^{\Kap}_X(\bt^{\vec{v}})^{-1}. \]
In any of the normed motivic measures we consider each term in this finite product converges absolutely at $\bt=(\bbL^{-\dim X}, \bbL^{- \dim X}, \ldots)$ because $||\vec{v}|| > 1$, thus the product converges absolutely. Theorem~\ref{theorem:convergence-criterion} then gives Theorem~\ref{theorem:pattern-avoiding-0-cycles}, and we also obtain convergence in the Hodge measure for $X/\bbC$, and in the dimension topology if $X$ is stably rational. In particular, if $V=\{(m, m, \ldots, m)\}$ as in Example~\ref{example:pattern-generating-functions}-(3), then we obtain a short proof of Theorem 1.9-2 of \cite{farb-wolfson-wood} and provide the motivic lift predicted there.

\begin{remark}
Note that while the generating function for any $A$ can be written as an infinite product of zeta functions, it is quite special that we obtain a finite product in this case. 
\end{remark}

\subsection{Pattern-avoiding zero-cycles II}\label{sub.pattern-avoiding-II}
We now consider the more general case where the vectors in $V$ are not necessarily orthogonal.

\subsubsection{M\"obius functions}\label{subsub:mobius-functions} It will be helpful to rewrite the power series in Theorem~\ref{theorem:convergence-criterion} using the notion of M\"{o}bius function appearing in \cite[Section~3]{bourqui}.

\begin{definition} The local M\"obius function $\mu_V: \bbZ_{\geq 0}^k\to \bbZ$ is defined recursively by the relation
$$\mathbf{1}_{A(V)\cup \{0\}}(\ul{n}) = \sum_{0\leq \ul{n}' \leq \ul{n}}\mu_V(\ul{n}'),$$
where the left-hand side is the characteristic function of the set $A(V)\cup \{0\}$.
\end{definition}

\begin{remark}\label{remark:mu_V_values} It is immediate from the definition that
\begin{itemize} \item $\mu_V(0) = 1$.
\item  for any $\ul{n}\in A(V)$, $\mu_V(\ul{n}) = 0.$
\item for any minimal $\vec{v}\in V$, $\mu_V(\vec{v}) = -1$.
\end{itemize}
\end{remark}

\begin{lemma}\label{lemma:local_moebius_generating} We have
$$(1-t_1)\cdots (1-t_k)\left(1 + \sum_{\ul{a}\in A(V)} \bt^{\ul{a}}\right)=  \sum_{\ul{n} \in \bbZ_{\geq 0}^k} \mu_V(\ul{n})\bt^{\ul{n}}.$$
\end{lemma}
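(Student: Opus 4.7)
The plan is to recognize the defining recursion for $\mu_V$ as a multidimensional Möbius-style convolution, which at the level of formal power series in $t_1,\dots,t_k$ becomes multiplication by the geometric series $\prod_{i=1}^k (1-t_i)^{-1}$. After that, the identity to be proved is nothing more than clearing denominators.

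More precisely, I would first observe that the defining relation
\[ \mathbf{1}_{A(V)\cup \{0\}}(\ul{n}) = \sum_{0\leq \ul{n}' \leq \ul{n}}\mu_V(\ul{n}') \]
is the statement that the arithmetic function $\mathbf{1}_{A(V)\cup\{0\}}$ on $\bbZ_{\geq 0}^k$ is the ``summatory'' function of $\mu_V$ with respect to the coordinatewise partial order. Since $\{0 \leq \ul{n}' \leq \ul{n}\} = \prod_i \{0,1,\dots,n_i\}$, this summation over down-sets corresponds, under the generating-function dictionary
\[ f \leftrightarrow F(\bt):=\sum_{\ul{n}\in \bbZ_{\geq 0}^k} f(\ul{n})\,\bt^{\ul{n}}, \]
to multiplication by $\sum_{\ul{n}\in \bbZ_{\geq 0}^k}\bt^{\ul{n}} = \prod_{i=1}^{k}\frac{1}{1-t_i}$.

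Applying this observation to the defining relation yields the identity
\[ \sum_{\ul{n}\in \bbZ_{\geq 0}^k} \mathbf{1}_{A(V)\cup\{0\}}(\ul{n})\,\bt^{\ul{n}} \;=\; \Biggl(\,\sum_{\ul{n}\in \bbZ_{\geq 0}^k}\mu_V(\ul{n})\,\bt^{\ul{n}}\Biggr)\cdot \prod_{i=1}^{k}\frac{1}{1-t_i} \]
in $\bbZ[[t_1,\dots,t_k]]$. Since $0 \notin A(V)$ (recall $A(V)\subset \bbZ_{\geq 0}^k\setminus\{0\}$), the left-hand side equals $1 + \sum_{\ul{a}\in A(V)}\bt^{\ul{a}}$. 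Multiplying both sides by $(1-t_1)\cdots(1-t_k)$ gives the claimed formula.

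No step here is particularly delicate — the main ``content'' is just noticing the correct translation between summation over initial segments of $\bbZ_{\geq 0}^k$ and multiplication by $\prod_i(1-t_i)^{-1}$. The only point warranting a line of justification is the identification of the left-hand generating function with $1+\sum_{\ul{a}\in A(V)}\bt^{\ul{a}}$, which uses that $0\notin A(V)$ together with the definition of the indicator function.
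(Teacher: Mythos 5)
Your proof is correct and matches the paper's argument: the paper likewise divides by $(1-t_1)\cdots(1-t_k)$ and observes that multiplying the generating function of $\mu_V$ by $\prod_i(1-t_i)^{-1}$ encodes the defining summatory relation, yielding $1+\sum_{\ul{a}\in A(V)}\bt^{\ul{a}}$. Your extra remark that $0\notin A(V)$ just makes explicit a point the paper leaves implicit.
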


\begin{proof} This follows by dividing both sides by $(1-t_1)\cdots (1-t_k)$, expanding the right-hand side and using the definition of $\mu_V$.
\end{proof}

\begin{example} In the case where $V = \{(m,\ldots,m)\}$, we get $\mu_V(m,\ldots,m) = -1$, and $\mu_V(\ul{n}) = 0$ for any other non-zero vector $\vec{n}$, so we recover Example~\ref{example:pattern-generating-functions}-(3). More generally, if the vectors in $V$ are orthogonal, one can check that we recover the expression in Section~\ref{sect:orthogonal_patterns}.
\end{example}

It is worth showing how this notion is related to the notion of M\"obius function of a poset. For every $\vec{v}\in \bbZ^k_{\geq 0}$, we denote by $\vec{v}(i)$ its $i$-th coordinate. Define $\vec{v}_{\max}$ to be the vector given by
$$\vec{v}_{\max}(i) = \max_{\vec{v}\in V} \vec{v}(i).$$

Define $P_V:= \{0\} \cup \bigcup_{\vec{v}\in V} \left\{\ul{n},\ \vec{v}\leq \ul{n}\leq \vec{v}_{\max}\right\}.$ Then we have
\begin{proposition}\hfill \begin{enumerate} \item The restriction of the function $\mu_V$ to $P_V$ is equal to the M\"obius function of the poset~$P_V$.
\item The function $\mu_V$ is zero outside of the finite set $P_V$.
\end{enumerate}
\end{proposition}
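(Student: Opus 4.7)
The plan is to prove (2) first, using the explicit Möbius inversion formula on the product poset $\bbZ_{\geq 0}^k$, and then deduce (1) from (2) by checking that the resulting recursion on $P_V$ matches the one characterizing the Möbius function of $P_V$. Recall that the Möbius function of the product poset $\bbZ_{\geq 0}^k$ is $\mu(\vec{n}', \vec{n}) = (-1)^{|\vec{n} - \vec{n}'|}$ if $\vec{n} - \vec{n}' \in \{0,1\}^k$ and zero otherwise, so the defining recursion for $\mu_V$ inverts to
\[ \mu_V(\vec{n}) = \sum_{\substack{\vec{n}' \leq \vec{n} \\ \vec{n} - \vec{n}' \in \{0,1\}^k}} (-1)^{|\vec{n} - \vec{n}'|} \, \mathbf{1}_{A(V) \cup \{0\}}(\vec{n}'). \]

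For (2), let $\vec{n} \notin P_V$ with $\vec{n} \neq 0$. If $\vec{n}$ does not dominate any $\vec{v} \in V$, then $\vec{n} \in A(V)$ and Remark~\ref{remark:mu_V_values} gives $\mu_V(\vec{n}) = 0$. Otherwise $\vec{n}$ dominates some $\vec{v} \in V$ but the condition $\vec{n} \leq \vec{v}_{\max}$ fails, so there exists a coordinate $i_0$ with $\vec{n}(i_0) > \vec{v}_{\max}(i_0)$. The idea is to exhibit a sign-reversing involution $\tau$ on the terms of the sum above that preserves the indicator values, forcing the sum to vanish. Define $\tau$ by toggling the $i_0$-th coordinate of $\vec{n}'$ between $\vec{n}(i_0)$ and $\vec{n}(i_0) - 1$ (both values are allowed since $\vec{n}(i_0) \geq 1$); this flips the sign $(-1)^{|\vec{n} - \vec{n}'|}$. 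The indicator $\mathbf{1}_{A(V) \cup \{0\}}$ equals $1$ iff the argument either is $0$ or fails to dominate any $\vec{v} \in V$. Both $\vec{n}'(i_0)$ and $\tau(\vec{n}')(i_0)$ are $\geq \vec{n}(i_0) - 1 \geq \vec{v}_{\max}(i_0) \geq \vec{v}(i_0)$ for every $\vec{v} \in V$, so the $i_0$-th coordinate plays no role in the domination test and the two values of the indicator agree \emph{unless} one of $\vec{n}', \tau(\vec{n}')$ equals $0$; in that edge case the other is $\vec{e}_{i_0}$, which lies in $A(V)$ because every $\vec{v} \in V$ has norm at least $2$, so both indicator values equal $1$ anyway.

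For (1), granted (2), the only nonzero terms in the recursion defining $\mu_V$ come from $\vec{n}' \in P_V$, so for $\vec{n} \in P_V$ we have
\[ \mathbf{1}_{A(V) \cup \{0\}}(\vec{n}) = \sum_{\substack{\vec{n}' \in P_V \\ \vec{n}' \leq \vec{n}}} \mu_V(\vec{n}'). \]
Every nonzero element of $P_V$ dominates some $\vec{v} \in V$ by definition, so the left-hand side is $1$ if $\vec{n} = 0$ and $0$ otherwise. This is precisely the recursion that characterizes $\mu_{P_V}(0, -)$, the Möbius function of the finite poset $P_V$ (with minimum $0$); since $0 \in P_V$ and $\mu_V(0) = 1$, induction on $\vec{n} \in P_V$ yields $\mu_V(\vec{n}) = \mu_{P_V}(0, \vec{n})$, proving (1).

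The main obstacle is the involution argument in (2), and specifically the careful verification that toggling the $i_0$-th coordinate does not alter the indicator even at the boundary case $\vec{n}' = 0$; the assumption that all patterns in $V$ have norm $\geq 2$ is exactly what makes this edge case work out.
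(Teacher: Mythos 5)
Your proof is correct and follows essentially the same route as the paper's: the same inversion formula for $\mu_V$ (the paper gets it by identifying coefficients in Lemma~\ref{lemma:local_moebius_generating}, you via M\"obius inversion on the product poset) and the same sign-reversing pairing in the coordinate $i_0$ exceeding $\vec{v}_{\max}(i_0)$ for (2), with (1) obtained by matching recursions on $P_V$. The only differences are cosmetic: you deduce (1) from (2) whereas the paper proves (1) directly from the vanishing of $\mu_V$ on $A(V)$, and you treat the boundary pair $\{0,\vec{e}_{i_0}\}$ explicitly --- a case the paper's phrasing (``both or neither lie in $A(V)$'') glosses over, but which works exactly as you say because every vector of $V$ has norm at least $2$.
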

\begin{proof} \hfill \begin{enumerate} \item 
Since for any $\ul{n}\in A(V)$, $\mu_V(\ul{n}) = 0$, the two functions satisfy the same recurrence relation for all $\ul{n}\leq \vec{v}_{\max}$, so coincide on all these elements.
\item Let $\vec{m} \in \bbZ^k_{\geq 0} \backslash P_V$. If $\vec{m} \in A(V) $, then we have already observed that $\mu_V(\vec{m})=0$. Otherwise, since $\vec{m}\not\in P_V$, we have $\vec{m} \not\leq \vec{v}_{\max}$ so there exists an index $i$ such that $\vec{m}(i) > \max_{\vec{v} \in V} \vec{v}(i)$. Note that, identifying coefficients in lemma \ref{lemma:local_moebius_generating}, we get
\[ \mu_V(\vec{m}) = \sum_{\vec{m}' \textrm{s.t.} (\vec{m}-\vec{m}') \in \{0,1\}^k} (-1)^{\sum (\vec{m}-\vec{m}')}\mathbf{1}_{A(V) \cup \{\vec{0}\}} (\vec{m}') .\]
Now, the $\vec{m}'$ in the sum break up naturally into pairs with the same coordinates outside of the $i$th index, and because for all terms we have $\vec{m}'(i) \geq \vec{m}(i)-1 \geq \max_{\vec{v} \in V} v(i)$, in each pair either both vectors or neither will lie in $A(V)$. Thus, the contributions of the two vectors in each pair cancel, and we obtain zero for the sum.

\end{enumerate}

\end{proof}

\begin{remark}\label{remark:bourqui} We comment further on Example~\ref{example:pattern-avoiding-examples}-(3). In this case, Bourqui \cite[Section 3]{bourqui} writes $B_{\mathrm{min}} \subset \{0,1\}^k$ where we write $V$, and the label generating function $1 + \sum_{\vec{a} \in A(V)} \bt^{\vec{a}}$ is what Bourqui denotes as $Q_B$; the product with the polynomial $(1-t_1)\cdots(1-t_k)$ is $P_B$, and the analogue of our Lemma~\ref{lemma:local_moebius_generating} is Bourqui's Lemme 3.1. The coefficients of (\ref{equation:convergence-criterion}) are then the values of the motivic M\"obius function of \cite[Section 3.3]{bourqui}, and the formula after specializing to Chow motives in characteristic zero in \cite[Theorem 3.3]{bourqui} follows from the identification of the motivic Euler product with a pre-$\lambda$ power and \cite[Lemma 2.8]{howe:mrv1}.

Because of the definition of motivic Euler products used there,  Bourqui's results are valid after tensoring the Grothendieck ring with $\bbQ$ and specializing to Chow motives. Our setting does not require these procedures, and can be thought of as a strengthening and generalization of the results of \cite[Section 3]{bourqui}, answering in particular Bourqui's Question 3.5: it boils down to verifying the identity

$$ \frac{\sum_{\vec{d} \in \bbZ_{\geq 0}^k} [\zerocycle{d}{A(V)}] \bt^{\vec{d}} }{Z_X^{\Kap}(t_1)Z_X^{\Kap}(t_2)\cdots Z_X^{\Kap}(t_k)}= \prod_{x\in X}\left(\sum_{\ul{n}} \mu_V(\ul{n})\bt^{\ul{n}}\right),$$
which follows from Lemma~\ref{lemma:local_moebius_generating} after taking motivic Euler products. As explained by Bourqui, a positive answer to his Question 3.5 ensures that Corollary 3.4 in loc. cit. is valid at the level of the Grothendieck ring of varieties, which in turn gives a of lift his main theorem to the Grothendieck ring of varieties. We give more details about this in Section~\ref{section:bourqui}, where we also address Hadamard convergence. 


\end{remark}

In the remainder of this subsection we prove the following convergence theorem for spaces of pattern-avoiding zero-cycles:

\begin{theorem} \label{theorem:convergence-pattern-avoiding}Suppose $X/\bbF_q$ is irreducible. Let $A = A(V)$ for some finite set of vectors $V\subset \bbZ^k_{\geq 0}$ of norms at least 2, and denote by $e$ the minimum of the sums of the coordinates of the vectors in $V$. Then,
$$Z_{\zerocycle{d}{A}} /_W Z_{\Sym^{\vec{d}}(X)}$$
converges as $\vec{d}\to \infty$ in the weight and point-counting topologies on $\bbZ[\bbC^{\times}]$. If $ \sum_{\ul{n} \in \bbZ^{k}_{\geq 0}-\{0\}}|\mu_V(\ul{n})| < q^{e\dim X}$, then it converges in the Hadamard topology.
\end{theorem}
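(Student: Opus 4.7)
My plan is to apply Theorem~\ref{theorem:convergence-criterion} three times, once for each of the three relevant normed motivic measures on $\calM_{\bbF_q}$: (i) the zeta measure composed with the weight completion of $\calR_1$; (ii) each ghost-coordinate evaluation $g_j\colon [Y]\mapsto |Y(\bbF_{q^j})|$ for $j\geq 1$, whose collection determines the point-counting topology; (iii) the zeta measure composed with the Hadamard completion $\calH_1$. For each $\phi$, I must verify (a) weak rationality of $Z_X^{\Kap}$ and (b) absolute convergence of the motivic Euler product in (\ref{equation:convergence-criterion}) at $\bt=(\bbL_\phi^{-\dim X},\ldots,\bbL_\phi^{-\dim X})$. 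By Lemma~\ref{lemma:local_moebius_generating}, this product rewrites as
\[
E(\bt) \;=\; \prod_{x\in X} P(\bt), \qquad P(\bt) \;=\; 1 + \sum_{\vec{n}\in P_V\setminus\{0\}}\mu_V(\vec{n})\,\bt^{\vec{n}},
\]
a motivic Euler product of a polynomial whose lowest nonconstant term has total degree $e\geq 2$ (by the hypotheses on $V$, together with the fact that $\mu_V$ vanishes on $A(V)$). Weak rationality for all three measures follows uniformly from Proposition~\ref{prop:kapranov-zeta-rational}: the poles and zeros of $(1-[q^{\dim X}]s)Z_{X,\mathrm{zeta}}^{\Kap}(s)$ and its inverse have norm at most $q^{\dim X-1/2}<q^{\dim X}$, and each of the three norms on $\calR_1$ agrees with the complex absolute value on the generators $[a]$.

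Weight convergence is then immediate from Corollary~\ref{corollary:general-pattern-zeta-weight} applied to $A=A(V)$. For point-counting convergence, I apply Theorem~\ref{theorem:convergence-criterion} with $\phi=g_j$ for each fixed $j\geq 1$. Absolute convergence of $E$ at $\bt=(q^{-j\dim X},\ldots)$ follows by combining Lemma~\ref{lemma:euler-product-convergence}---which gives convergence in the dimensional topology of $\widehat{\calM}_{\bbF_q}$ since $P-1$ has no linear term and $e\geq 2$---with Deligne's point-count bound $|Y(\bbF_{q^j})|\leq C_j q^{j\dim Y}$, which makes $g_j\colon \widehat{\calM}_{\bbF_q}\to\bbC$ continuous for the dimension filtration on a sufficiently small neighbourhood of the evaluation point. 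This yields convergence in each seminorm $\|\cdot\|_j$, hence in the point-counting topology.

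The main content of the theorem is the Hadamard case. Here I apply Theorem~\ref{theorem:convergence-criterion} with $\phi$ the zeta measure to $\calH_1$. Since the zeta measure is pre-$\lambda$ (Proposition~\ref{prop-zeta-measure-lambda-ring}), the power structure is preserved and $Z_{E(\bt)}=P(\bt)^{Z_X}$ in $\calR_1[[\bt]]$ for the Witt pre-$\lambda$ structure. Applying Lemma~\ref{lemma:pre-lambda-power-formula} gives the formal identity
\[
\log P(\bt)^{Z_X} \;=\; \sum_{m\geq 1} p'_m(Z_X)\cdot \log P(\bt^m)
\]
in $(\calR_1\otimes\bbQ)[[\bt]]$. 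The Adams operations on $\bbZ[\bbC^\times]$ act by $[a]\mapsto[a^d]$, and Deligne's bound $|a|\leq q^{\dim X}$ on zeros and poles of $Z_X$ gives $\|p'_m(Z_X)\|_H\leq B\cdot q^{m\dim X}$ (up to constants depending only on $m$), where $B$ is the total multiplicity of zeros and poles of $Z_X$. At the specialization $\bt=([q^{-\dim X}],\ldots)$, the element $y_m:=P(\bt^m)-1$ has Hadamard norm at most $M\cdot q^{-me\dim X}$ with $M=\sum_{\vec{n}\neq 0}|\mu_V(\vec{n})|$, because each monomial $\bt^{m\vec{n}}$ specialises to $[q^{-m\dim X|\vec{n}|}]$ and $|\vec{n}|\geq e$ on the support of $\mu_V\setminus\{0\}$. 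The hypothesis $M<q^{e\dim X}$ is precisely $\|y_1\|_H<1$, which forces $\|y_m\|_H<1$ for all $m\geq 1$ and makes the series $\log(1+y_m)=y_m-y_m^2/2+\cdots$ converge absolutely in Hadamard norm with $\|\log(1+y_m)\|_H\leq -\log(1-Mq^{-me\dim X})$. Combining these bounds,
\[
\left\|\sum_{m\geq 1} p'_m(Z_X)\log P(\bt^m)\right\|_H \;\lesssim\; \sum_{m\geq 1} BM\cdot q^{m\dim X(1-e)},
\]
which converges geometrically because $e\geq 2$. Exponentiating yields absolute convergence of $E$ in $\calH_1$ at the required specialization, and Theorem~\ref{theorem:convergence-criterion} delivers Hadamard convergence.

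The principal obstacle is the Hadamard estimate: one must carefully balance the Deligne-type inflation of $\|p'_m(Z_X)\|_H$ by $q^{m\dim X}$ against the contraction coming from the $e$-fold vanishing of $P-1$ at the specialization, and then recognise that the resulting threshold for convergence of $\log(1+y_1)$ is exactly the stated condition $M<q^{e\dim X}$.
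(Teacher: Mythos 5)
Your weight argument is the paper's (Corollary~\ref{corollary:general-pattern-zeta-weight}), and your Hadamard argument is essentially a correct inline reproduction of the Hadamard half of Lemma~\ref{lemma:convergence_poly_nolinearterms}: after identifying the Euler product of $P(\bt)=\sum_{\ul{n}}\mu_V(\ul{n})\bt^{\ul{n}}$ with the pre-$\lambda$ power $P(\bt)^{Z_X(s)}$ via Proposition~\ref{prop-zeta-measure-lambda-ring}, your bounds $\|p'_m(Z_X)\|_H\lesssim q^{m\dim X}$ and $\|P(\bt^m)-1\|_H\le M q^{-me\dim X}$ at the specialization are exactly the paper's estimate (\ref{eqn:zeta_powersum_estimate}) and the resulting geometric series from Lemma~\ref{lemma:pre-lambda-power-formula}; combined with weak rationality (Proposition~\ref{prop:kapranov-zeta-rational}) and Theorem~\ref{theorem:convergence-criterion}, this gives the conditional Hadamard statement. (One small point of care: Theorem~\ref{theorem:convergence-criterion} asks for absolute convergence of the specialized coefficient series, so you should note that your estimates are obtained monomial-by-monomial and hence bound the sums of Hadamard norms of the coefficients of $\log$ of the Euler product, degree by degree, before exponentiating --- this is the same implicit step as in the paper, so it is fine.)

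The genuine gap is in your point-counting argument. You deduce convergence in each seminorm $\|\cdot\|_j$ from Lemma~\ref{lemma:euler-product-convergence} (dimensional convergence) together with the claim that a bound $|Y(\bbF_{q^j})|\le C_j q^{j\dim Y}$ makes the point-counting measure continuous for the dimension filtration. Both halves of this fail: there is no constant $C_j$ independent of $Y$ (a disjoint union of $N$ points has dimension $0$ and $N$ rational points; any Deligne/Lang--Weil constant depends on the Betti numbers or degree of $Y$), and point counting is \emph{not} continuous for the dimensional topology --- the paper stresses exactly this in the introduction, and it is the reason the Hadamard topology exists at all (e.g.\ $q^{2N}\bbL^{-N}$ tends to $0$ dimensionally while its point count diverges). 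Nor can you simply recycle your Hadamard-style estimate, because the theorem asserts point-counting convergence \emph{unconditionally}, with no hypothesis $M<q^{e\dim X}$. The paper's Lemma~\ref{lemma:convergence_poly_nolinearterms} circumvents this by using that, for point counts, the exponents $p'_m$ specialize to the \emph{nonnegative integers} counting closed points of degree $m$ on $X$: one factors out the finite product $\prod_{m\le M}P(\bt^m)^{p'_m([X])}$, which is a polynomial and hence converges trivially, and applies the logarithmic estimate only to the tail $m>M$ with $M$ chosen so that $\Sigma^{1/M}\le q^{e\dim X}$. You need this step (or some other direct control of the actual coefficients of the Euler product) to complete the point-counting case.
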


\begin{example} Let $V = \{(2,1), (1,2)\}$ as in Example~\ref{example:pattern-avoiding-examples}-(2). Then Theorem~\ref{theorem:convergence-pattern-avoiding} tells us that $$Z_{\zerocycle{d}{A(V)}} /_W Z_{\Sym^{\vec{d}}(X)}$$ converges in the Hadamard topology for any value of $q$. Indeed, in this case $e=3$ and the only non-zero values of $\mu_V$ are 
\[ \mu_V(2,1)=\mu_V(1,2)=-1 \textrm{ and } \mu_V(2,2)=1, \]
	so the inequality becomes $3 < q^3$, which is satisfied for any prime power $q$. 
\end{example}

\begin{remark} In general, we do not necessarily expect Hadamard convergence to hold for all values of $q$. See Remark~\ref{remark:hadamard-non-convergence} for a discussion of this phenomenon, and an example of Hadamard non-convergence.
\end{remark}

To prove Theorem~\ref{theorem:convergence-pattern-avoiding}, we will apply Theorem~\ref{theorem:convergence-criterion}, and we start by establishing some bounds which will be needed to check convergence of the motivic Euler product appearing in its hypotheses. In the following we use the notation of \ref{subsub.computing-simple-powers}. 

\begin{lemma}\label{lemma:power-sum-estimates}
For $R$ a pre-$\lambda$ ring, $r \in R$ and $a_1,\ldots,a_k\in \bbZ$, the coefficient of $\mathbf{u}^{\vec{d}}$ in
\[ \log \left( (1+a_1u_1 + \cdots + a_ku_k)^r\right),  \]
an element in $R\otimes_{\bbZ} \bbQ$, is
\begin{equation}\label{eqn:coeff-exact-poly}	-\sum_{m \vec{d'}=\vec{d}} \binom{\sum \vec{d'}}{\vec{d'}}\frac{(-1)^{\sum \vec{d'}}}{\sum \vec{d'}} a_1^{d'_1}\cdots a_k^{d'_k}p_m'(r).\end{equation}
In particular, if $R \otimes_{\bbZ} \bbQ$ is normed, we find that the sum of the norms of the coefficients of $\mathbf{u}^{\vec{d}}$ for a fixed total degree $d = \sum{\vec{d}}$ is bounded by
\begin{equation}\label{eqn:coeff-estimates-poly} \sum_{m|d} \left(\sum_{i}|a_i|\right)^{d/m} ||p'_m(r)||. \end{equation}
\end{lemma}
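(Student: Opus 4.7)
The plan is to apply Lemma~\ref{lemma:pre-lambda-power-formula} with the polynomial $f(u_1,\ldots,u_k) = 1 + a_1 u_1 + \cdots + a_k u_k$, which has constant term $1$, and then extract the coefficient of $\mathbf{u}^{\vec{d}}$ by hand. By Lemma~\ref{lemma:pre-lambda-power-formula},
\[
\log\bigl((1+a_1u_1+\cdots+a_ku_k)^r\bigr) \;=\; \sum_{m\geq 1} p_m'(r)\,\log\bigl(1+a_1 u_1^m + \cdots + a_k u_k^m\bigr),
\]
so the task reduces to computing the coefficient of $\mathbf{u}^{\vec{d}}$ in each summand and summing.

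Next I would expand the scalar logarithm using $\log(1+s) = -\sum_{j\geq 1}\frac{(-s)^{j}}{j}$ and then expand the $j$th power by the multinomial theorem. Only monomials $u_1^{m d'_1}\cdots u_k^{m d'_k}$ appear from the $m$th summand, so the requirement $m\vec{d'} = \vec{d}$ emerges naturally, with $j = \sum \vec{d'}$. Collecting yields
\[
-\sum_{m\vec{d'}=\vec{d}} \binom{\sum \vec{d'}}{\vec{d'}}\frac{(-1)^{\sum\vec{d'}}}{\sum\vec{d'}}\, a_1^{d'_1}\cdots a_k^{d'_k}\, p_m'(r),
\]
which is exactly (\ref{eqn:coeff-exact-poly}).

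For the norm estimate I would take absolute values termwise, drop the harmless factor $1/\sum\vec{d'} \leq 1$, and then for each divisor $m \mid d$ sum over all $\vec{d'}$ with $\sum\vec{d'} = d/m$. The multinomial identity
\[
\sum_{\sum\vec{d'} = d/m} \binom{d/m}{\vec{d'}}\,|a_1|^{d'_1}\cdots |a_k|^{d'_k} \;=\; \Bigl(\sum_i |a_i|\Bigr)^{d/m}
\]
then collapses the inner sum to $(\sum|a_i|)^{d/m}\|p_m'(r)\|$, giving (\ref{eqn:coeff-estimates-poly}) after summing over $m\mid d$. There is no real obstacle here; the only mild bookkeeping point is correctly reindexing the sum over $\vec{d}$ with $\sum\vec{d} = d$ into a sum over $(m,\vec{d'})$ with $m\sum\vec{d'} = d$, which is a bijection once one fixes $m = \gcd$-compatible divisors of the coordinates.
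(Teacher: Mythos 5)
Your proposal is correct and follows exactly the paper's route: apply Lemma~\ref{lemma:pre-lambda-power-formula} to $f=1+a_1u_1+\cdots+a_ku_k$, expand the logarithm by the multinomial theorem to get (\ref{eqn:coeff-exact-poly}), then take norms, drop the factor $1/\sum\vec{d'}$, and collapse the inner sum with the multinomial identity to get (\ref{eqn:coeff-estimates-poly}). The only nitpick is the closing remark about a ``bijection'': the reindexing is simply the observation that summing over all $\vec{d}$ with $\sum\vec{d}=d$ and then over pairs $(m,\vec{d'})$ with $m\vec{d'}=\vec{d}$ is the same as summing over all pairs with $m\mid d$ and $\sum\vec{d'}=d/m$, no gcd bookkeeping needed.
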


\begin{proof}
The formula (\ref{eqn:coeff-exact-poly}) is obtained by expanding the formula given in Lemma \ref{lemma:pre-lambda-power-formula} in this case. We then obtain the estimate (\ref{eqn:coeff-estimates-poly}) by summing norms for fixed $m$ and all $\vec{d}$ in (\ref{eqn:coeff-exact-poly}).
\end{proof}

\begin{lemma}\label{lemma:convergence_poly_nolinearterms} Let $X$ be an irreducible variety over $\bbF_q$. Let $$P(t_1,\ldots,t_k) = 1 + \sum_{\vec{\imath}} b_{\vec{\imath}}\, \bt^{\vec{\imath}}\in \bbZ[t_1,\ldots,t_k]$$ be a polynomial such that $P(t_1,\ldots,t_k) - 1$ has only terms of degree at least $e\geq 2$. The motivic Euler product
$$\prod_{x\in X}P(t_1,\ldots,t_k)$$
converges absolutely at $t_1,\ldots,t_k = [q^{-\dim X}]$  in the point counting topology.  If $\sum_{\vec{\imath}}|b_{\vec{\imath}}| < q^{e\dim X}$, then it also converges absolutely in the Hadamard topology.
\end{lemma}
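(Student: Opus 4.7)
The plan is to interpret the motivic Euler product via the Kapranov power structure, so that $\prod_{x \in X} P(\bt) = P(\bt)^{[X]}$. Since the zeta measure is a homomorphism of pre-$\lambda$-rings by Proposition~\ref{prop-zeta-measure-lambda-ring}, applying it coefficient-wise converts the motivic Euler product into $P(\bt)^{Z_X(t)}$ in the Witt pre-$\lambda$-ring structure on $\calR_1$. The two topologies are then handled by different techniques.

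For the point-counting topology, I reduce to a classical Euler product. For each $j \geq 1$, applying the point-counting measure $[Y] \mapsto |Y(\bbF_{q^j})|$ coefficient-wise to the motivic Euler product gives a complex-valued power series whose $\vec{d}$-th coefficient $c_{\vec{d}}(\bbF_{q^j})$ is bounded in absolute value, via the triangle inequality $|c_{\vec{d}}(\bbF_{q^j})| \leq \sum_{\lambda \vdash \vec{d}} |b_\lambda| \cdot |C^\lambda X(\bbF_{q^j})|$, by the $\vec{d}$-th coefficient of the classical Euler product
\[ \prod_{x \in X_{\bbF_{q^j}} \text{ closed}} \tilde P(\bt^{\deg x}), \quad \tilde P(\bt) = 1 + \sum_{\vec{\imath}} |b_{\vec{\imath}}|\, \bt^{\vec{\imath}}. \]
After substituting $t_i = q^{-j\dim X}$, each local factor is $1 + O(q^{-je\dim X \deg x})$; since $X_{\bbF_{q^j}}$ has at most $O(q^{jd\dim X}/d)$ closed points of degree $d$, the sum $\sum_x (\tilde P - 1)$ at these values converges geometrically as soon as $e \geq 2$, with no condition on the $|b_{\vec{\imath}}|$. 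This yields absolute convergence in each seminorm $||\cdot||_j$, hence in the point-counting topology.

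For the Hadamard topology, I apply Lemma~\ref{lemma:pre-lambda-power-formula} to write
\[ \log P(\bt)^{Z_X(t)} = \sum_{m \geq 1} p'_m(Z_X(t))\, \log P(\bt^{(m)}) \]
and bound each term after substituting $\bt = ([q^{-\dim X}], \ldots, [q^{-\dim X}])$. Sub-multiplicativity of $||\cdot||_H$ combined with the power series for $\log$ give $||\log P([q^{-m\dim X}], \ldots)||_H \leq -\log(1 - Bq^{-me\dim X})$ whenever $Bq^{-me\dim X} < 1$, while Deligne's bounds on Frobenius eigenvalues yield $||p'_m(Z_X)||_H = O(q^{m\dim X})$. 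Combining these, the $m$-th term has Hadamard norm bounded by $O\bigl(q^{m\dim X}(-\log(1 - Bq^{-me\dim X}))\bigr)$.

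The main obstacle is the $m=1$ term: its finiteness requires precisely $B < q^{e\dim X}$, which is exactly the hypothesis. For $m \geq 2$ one has $Bq^{-me\dim X} < q^{-(m-1)e\dim X} \leq q^{-e\dim X}$, so $-\log(1-x) \leq 2x$ applies and the $m$-th term becomes $O(q^{-m(e-1)\dim X})$, summable as a geometric series with ratio at most $q^{-\dim X} < 1$ since $e \geq 2$. Hence $\log P([q^{-\dim X}], \ldots)^{Z_X(t)}$ converges absolutely in $||\cdot||_H$, and the standard estimate $||\exp L||_H \leq \exp ||L||_H$ propagates this to absolute convergence of $P([q^{-\dim X}], \ldots)^{Z_X(t)}$ itself.
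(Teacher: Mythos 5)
Your Hadamard argument is essentially the paper's: both identify $\prod_{x\in X}P(\bt)$ with the power $P(\bt)^{[X]}$, pass to $P(\bt)^{Z_X(t)}$ via Proposition~\ref{prop-zeta-measure-lambda-ring}, expand the logarithm using Lemma~\ref{lemma:pre-lambda-power-formula}, and control $\|p_m'(Z_X(t))\|_H\le Cq^{m\dim X}$ via Deligne's bounds; the only difference is bookkeeping --- the paper first reduces by monomial substitution to a linear polynomial in auxiliary variables of norm at most $q^{-e\dim X}$ and estimates degree by degree through Lemma~\ref{lemma:power-sum-estimates}, whereas you bound the whole inner series for fixed $m$ by $-\log(1-Bq^{-me\dim X})$ and sum over $m$, which isolates the hypothesis $B<q^{e\dim X}$ at $m=1$ in the same way. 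Your point-counting argument, on the other hand, is a genuinely different route: the paper stays inside the pre-$\lambda$ formalism, factoring out the finite part $\prod_{m\le M}P(\bt^m)^{p_m'([X])}$ and estimating the tail after choosing $M$ with $\Sigma^{1/M}\le q^{e\dim X}$, while you dominate the specialized series by the classical Euler product $\prod_{x}\tilde P(\bt^{\deg x})$ over closed points of $X_{\bbF_{q^j}}$, which needs no condition on the coefficients and is arguably more transparent. The one step to tighten there is the displayed inequality $|c_{\vec d}(\bbF_{q^j})|\le\sum_{\lambda\vdash\vec d}|b_\lambda|\,|C^\lambda X(\bbF_{q^j})|$: the constant labels $b_{\vec\imath}$ (which may be negative) enter through $\Sym^{\lambda(\vec\imath)}_X(b_{\vec\imath})$ inside the configuration-space construction, not as scalar prefactors, so they cannot simply be pulled out; the clean justification is that the point-counting specialization of a motivic Euler product is exactly the classical Euler product over closed points (\cite{bilu:motiviceulerproducts}), after which the termwise comparison with $\tilde P$ is the usual triangle inequality for a classical Euler product, and your geometric tail estimate using $e\ge 2$ then finishes the argument. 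With that compatibility invoked (or an equivariant fiber count for effective labels plus a sign argument for the virtual part), your proof is complete and yields the same conclusions as the paper's.
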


\begin{proof}
Since motivic Euler products commute with monomial substitutions (see \cite[Section 6.5]{bilu-howe}), and since each non-constant monomial is of degree at least $e$, we can reduce to verifying the convergence of
$$\prod_{x\in X} (1 + a_1 u_1+ \cdots + a_nu_n)$$
for $|u_i| \leq q^{-e\dim X}$, where $n$ is the number of non-constant monomials appearing in $P$, and $a_1,\ldots,a_n$ are the coefficients $b_{\ul{i}}$, arbitrarily relabeled. Because the coefficients are constant, the motivic Euler product is equivalent to the power
\[ (1 + a_1 u_1+ \cdots + a_nu_n)^{[X]} \]
Moreover, by Proposition~\ref{prop-zeta-measure-lambda-ring} the zeta measure is a map of pre-$\lambda$ rings, so we can compute the image as
\[  (1 + a_1 u_1+ \cdots + a_nu_n)^{Z_X(s)} \]
and apply Lemma \ref{lemma:power-sum-estimates}.

We first treat the Hadamard case. We extend the Hadamard norm on $\bbZ[\bbC^\times]$ to $\bbQ[\bbC^\times]$ in the obvious way; it then suffices to show that $\log$ converges absolutely. Now, we consider the estimates (\ref{eqn:coeff-estimates-poly}) for
\[ r=[Z_X(s)]=[q^{\dim X}] \pm [z_1] \pm \cdots \pm [z_N], \] where $q^{\dim X-1/2} \geq |z_i| \geq 1$. Using that $p_m'$ acts additively and that
\[ p_m'([z])=\frac{1}{m}\sum_{d|m}\mu(m/d)p_d([z])=\frac{1}{m}\sum_{d|m}\mu(m/d)[z^d], \]
(see Section~\ref{subsection:lambda_and_adams}) we obtain the estimate
\begin{equation}\label{eqn:zeta_powersum_estimate} || p_{m}'(r) ||_H \leq q^{m \dim X} (1 + N q^{-m/2}) \leq C q^{m \dim X}. \end{equation}
Suppose $\Sigma := \sum_{i}|a_i| \leq q^{e\dim X}$. Then, we can bound (\ref{eqn:coeff-estimates-poly}) by
\[ C \cdot ( \Sigma^d q^{\dim X} + \sum_{m|d, m\neq 1} \Sigma^{d/m}q^{m\dim X})\leq  C \cdot ( \Sigma^d q^{\dim X} + \sum_{m|d, m\neq 1} q^{(\frac{de}{m} + m) \dim X}).\]
$$\leq C\cdot \left( \Sigma^d q^{\dim X} + dq^{\left(\frac{de}{2} + 2\right)\dim X}\right).$$
In particular, if $\Sigma < q^{e\dim X}$, we conclude the series converges absolutely for
\[ ||u_1||_H,\ldots, ||u_k||_H \leq q^{-e\dim X}.\]

For the point counting case, it suffices to show convergence for $\bbF_q$-points. Then, $p_m'([X])$ is just an integer, the number of closed points of degree $m$ on $X/\bbF_q$. Thus for any $M$ we can factor out the polynomial
\[ \prod_{m=1}^M P(t_1^{m},\ldots,t_k^{m})^{p_m'([X])}, \]
and then taking $\log$ of what remains gives a series with coefficients bounded by $(\ref{eqn:coeff-estimates-poly})$ but where the sums are over $m > M$. We then obtain absolute convergence by taking $M$ large enough that $\Sigma^{1/M} \leq q^{e\dim X}$ and estimating as above.
\end{proof}

\begin{remark}\label{remark:hadamard-convergence-wiggleroom} In fact, as can be seen from the proof of Lemma~\ref{lemma:convergence_poly_nolinearterms}, if $\epsilon$ is such that $\Sigma < q^{e\dim X-\epsilon}$, then Hadamard convergence holds for $||u_i||_H \leq q^{-e\dim X+ \eta}$ for $\eta < \epsilon$, and thus for $||t_i||_H \leq q^{-\dim X + \eta/e}.$ In the same manner, we can get convergence of point counts for $|t_i| < q^{-\dim X + \delta}$ for some $\delta > 0$.  
\end{remark}

\begin{proof}[Proof of Theorem~\ref{theorem:convergence-pattern-avoiding}] The convergence in the weight topology follows from Corollary~\ref{corollary:general-pattern-zeta-weight}. For Hadamard and point counting convergence, we note that by the properties of the M\"obius function, the power series
$$(1-t_1)\cdots (1-t_k)\left(1 + \sum_{a\in A} \bt^{\vec{a}}\right) = \sum_{\ul{n}}\mu_V(\ul{n})\bt^{\ul{n}}$$
is a polynomial satisfying the assumptions of Lemma~\ref{lemma:convergence_poly_nolinearterms}, and the latter combined with Theorem~\ref{theorem:convergence-criterion} allows us to conclude.
\end{proof}

\subsection{Finite sets of allowable labels}\label{subsec:second-app-conf-finite}

In the previous section, we showed that in the case $A = A(V)$, we can prove Hadamard convergence of $$Z_{\zerocycle{d}{A}} /_W Z_{\Sym^{\vec{d}}(X)}$$
for $q$ sufficiently large, with the bound in $q$ depending only on the sum of the absolute values of the values of the M\"obius function of $V$.  In some special cases, it is actually possible to improve this bound. The aim of this section is to give a sharp lower bound on $q$ in the setting of Example~\ref{example:pattern-generating-functions}-(2). Thus, for a fixed $k$ we are looking at the behavior of $\Conf^{(d_1,\ldots,d_k)}(X)$ as $d_1, \ldots, d_k \rightarrow \infty$, and the corresponding generating function is
\begin{equation}\label{equation:conf-k-generating} \prod_{x \in X} (1+t_1 + \ldots + t_k). \end{equation}
\begin{remark}
For any finite set of patterns $A$ not necessarily of the form $A(V)$, the generating function for $\zerocycle{d}{A}$ is obtained by monomial substitutions from (\ref{equation:conf-k-generating}) for $k=|A|$. Similarly to the proof of Lemma~\ref{lemma:convergence_poly_nolinearterms}, the bounds we obtain in the universal case considered here can also be used to study the case of arbitrary finite~$A$.
\end{remark}

\begin{theorem}\label{theorem:finite-label-set} Suppose $X/\bbF_q$ is irreducible. Then,
\[  Z_{C^{\vec{d}}(X)} /_W Z_{\Sym^{\vec{d}}(X)} \]
converges as $\vec{d} \in \bbZ_{\geq 0}^k$ goes to infinity in the weight and point-counting topologies on $\bbZ[\bbC^\times]$. If $k < q^{\dim X}$, it converges in the Hadamard topology.
\end{theorem}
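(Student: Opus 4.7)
My plan is to apply Theorem~\ref{theorem:convergence-criterion} with $A = \{\vec e_1, \ldots, \vec e_k\}$. The weak rationality of $Z_X^{\Kap}$ is automatic for the Hadamard, weight, and point-counting completions (the last via the family of ghost-coordinate measures), so the task reduces to absolute convergence at $t_1 = \cdots = t_k = [q^{-n}]$ (where $n = \dim X$) of the motivic Euler product
\[ P(t_1,\ldots,t_k)^{[X]} = \prod_{x \in X}(1-t_1)\cdots(1-t_k)(1+t_1+\cdots+t_k). \]
A direct expansion shows that the linear part of $P - 1$ vanishes (the $-\sum t_i$ from $\prod(1-t_i)$ cancels the $+\sum t_i$ from the last factor), so $P = 1 + Q$ with $Q$ supported in degrees $\geq 2$. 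The weight case is then immediate from Corollary~\ref{corollary:general-pattern-zeta-weight}, and the point-counting case follows from Lemma~\ref{lemma:convergence_poly_nolinearterms}, whose point-counting argument holds for every $q$.

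For the Hadamard case under $k < q^n$, the naive universal bound $\sum_{\vec j}|b_{\vec j}|$ from Lemma~\ref{lemma:convergence_poly_nolinearterms} grows like $(k-1)2^k$, which is far too large to deduce $\sum_{\vec j}|b_{\vec j}| < q^{2n}$ from the hypothesis $k < q^n$. Instead, I propose to apply Lemma~\ref{lemma:pre-lambda-power-formula} directly:
\[ \log P(\bt)^{Z_X(s)} = \sum_{m\geq 1} p_m'(Z_X(s))\, \log P(t_1^m,\ldots,t_k^m), \]
and then control each summand after evaluation at $t_i = [q^{-n}]$. Since all variables get set to the same Hadamard element $[q^{-mn}]$, everything is determined by the univariate polynomial $P(y,\ldots,y) = (1-y)^k(1+ky) = 1 + \sum_{j=2}^{k+1} c_j\, y^j$. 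The key technical identity is
\[ \sum_{j=2}^{k+1}|c_j|\,y^j = 1 - (1+y)^k(1-ky), \]
which I will verify by combining the formula $|c_j| = \binom{k}{j-1}(j-1)(k+1)/j$ with a telescoping binomial sum. The right-hand side is strictly less than $1$ precisely when $ky < 1$, so at $y = q^{-mn}$ the hypothesis $k < q^n$ delivers what we need for every $m \geq 1$.

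To promote this univariate bound to the multivariate $L^1$ norm $\sum_{\vec e \neq 0}|b_{\vec e}|\,q^{-mn|\vec e|}$ controlling the Hadamard norm of $\log P(t_1^m,\ldots,t_k^m)|_{t_i = [q^{-n}]}$, I will carry out a short case analysis showing that all non-zero multivariate coefficients $b_{\vec e}$ with fixed total degree $j = |\vec e|$ share the sign $(-1)^{j+1}$: the two surviving types of multi-indices (all $d_i \leq 1$, or exactly one $d_i = 2$) contribute $(-1)^{j+1}(j-1)$ and $(-1)^{j+1}$ respectively, so no cancellation occurs when passing from multivariate to univariate $L^1$. Granted this, the elementary bound $\|\log(1 + W)\|_H \leq -\log(1-\|W\|_H)$ yields
\[ \bigl\|\log P(t_1^m,\ldots,t_k^m)\big|_{t_i = [q^{-n}]}\bigr\|_H \leq -\log\bigl((1+q^{-mn})^k(1-kq^{-mn})\bigr) = \tfrac{k(k+1)}{2}\, q^{-2mn} + O(q^{-3mn}). \]
Combined with the Deligne-type estimate $\|p_m'(Z_X(s))\|_H \leq C\, q^{mn}$ already used in Lemma~\ref{lemma:convergence_poly_nolinearterms}, the sum over $m$ has geometric tail of order $q^{-mn}$, so converges absolutely. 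Finally, exponentiating back via the submultiplicative bound $\|\exp S\|_H \leq e^{\|S\|_H}$ transfers absolute convergence of $\log P^{Z_X(s)}|_{t_i = [q^{-n}]}$ to the motivic Euler product itself, and Theorem~\ref{theorem:convergence-criterion} then delivers the Hadamard convergence of the density. The main obstacle is the sharp identity for $\sum|c_j|y^j$ together with the sign-coherence of multivariate coefficients; these are the two combinatorial inputs that make the hypothesis $k < q^n$ sharp, and once they are in hand the bookkeeping is routine.
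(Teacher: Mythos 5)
Your proposal is correct and reaches the sharp threshold $k<q^{\dim X}$, with the same skeleton as the paper (reduce via Theorem~\ref{theorem:convergence-criterion}, weight case from Corollary~\ref{corollary:general-pattern-zeta-weight}, point counting from Lemma~\ref{lemma:convergence_poly_nolinearterms}, Hadamard case from Lemma~\ref{lemma:pre-lambda-power-formula} plus the Deligne-type bound $\|p_m'(Z_X(s))\|_H\leq Cq^{m\dim X}$), but the key Hadamard estimate is organized differently. The paper proves a bespoke variant of the coefficient formula (Lemma~\ref{lemma:power-sum-estimates-finite}) whose whole point is that the $m=d$ term is absent --- precisely because the factors $(1-t_1)\cdots(1-t_k)$ kill the linear term --- and then uses the crude bound $\sum_{m\mid d,\,m\neq d}k^{d/m}\|p_m'\|$ degree by degree. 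You instead group by the Adams level $m$, observe that all coefficients of $(1-t_1)\cdots(1-t_k)(1+t_1+\cdots+t_k)$ in a fixed total degree $j\geq 2$ share the sign $(-1)^{j+1}$ (your two-case analysis is correct: $(-1)^{j+1}(j-1)$ for square-free support, $(-1)^{j+1}$ for one doubled index), so the multivariate $\ell^1$ mass in degree $j$ equals $|c_j|$ for the univariate specialization $(1-y)^k(1+ky)$, and then invoke the closed form $\sum_{j\geq 2}|c_j|y^j=1-(1+y)^k(1-ky)$ together with $-\log(1-\cdot)$ and exponentiation. Both arguments rest on exactly the same cancellation (no linear term), but your packaging gives a cleaner, essentially sharp per-$m$ bound of order $q^{-2mn}$ in place of the paper's $k^{d/m}$ bookkeeping, at the cost of the extra sign-coherence verification; you also correctly diagnose why the generic bound of Lemma~\ref{lemma:convergence_poly_nolinearterms} (or Theorem~\ref{theorem:convergence-pattern-avoiding} with $V$ the degree-two vectors) cannot give the sharp threshold. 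One small point of hygiene: your inequalities $\|\log(1+W)\|_H\leq-\log(1-\|W\|_H)$ and $\|\exp S\|_H\leq e^{\|S\|_H}$ should be read as statements about the weighted $\ell^1$ sums of the formal coefficients evaluated at $[q^{-n}]$ (which is how absolute convergence enters Theorem~\ref{theorem:convergence-criterion}), not merely about the norms of the values; since the Hadamard norm is sub-multiplicative this is exactly how your estimates run, so no actual gap results.
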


\begin{remark} This case is also covered by Theorem \ref{theorem:convergence-pattern-avoiding} by taking $V$ to be the set of all vectors in $\bbZ^k_{\geq 0}$ with sum of coordinates equal to 2.  However, the bound obtained is worse: it is of the form $f(k) < q^{2\dim X}$ for $f(k)$ exponential in $k$. For $k = 2$, however, it gives the equivalent condition $\sqrt{5} < q^{\dim X}$.
\end{remark}

\begin{remark}\label{remark:hadamard-non-convergence}
The condition for Hadamard convergence is not just an artifact of the proof: When $X=\bbA^1/\bbF_q$ and $k=2$, we can use a computer to compute the limiting formal divisor in the weight topology to high precision. The limit is of the form $\sum_{n\geq 0} (-1)^n a_n [q^{-{n}}]$, and we have verified that $a_n \geq 2^n$ for $n \leq 250$. Moreover, the computations strongly suggest that the ratios $|a_n|/|a_{n-1}|$ are a decreasing sequence for $n \geq 2$ with $\lim_{n \rightarrow \infty} |a_n/a_{n-1}|=2$. If this holds, then for $q=2$, any sequence of functions converging in the weight topology to this formal divisor has unbounded Hadamard norms. It is probably possible to prove the estimate $a_n \geq 2^n$ by expanding more carefully using the techniques below, but we leave this to the interested reader. In Appendix~\ref{appendix:computations}, we give the first 250 terms of this formal divisor; for comparison, we also give the exact divisor of $Z_{\Conf^{40,40} \bbA^1}(t q^{-80})$ along with its Hadamard and point-counting norm for $q=2$.

Note that this does not violate our meta-conjecture: limits exists for both the weight and point-counting topologies, but the limit in the weight topology is not a Hadamard function! In particular, the limit in the weight topology cannot even be compared to the the limit in the point-counting topology, because a general formal divisor does not have a well-defined Taylor expansion.
\end{remark}

To prove Theorem~\ref{theorem:finite-label-set}, we will use the same approach as for the proof of \ref{theorem:convergence-pattern-avoiding}. We start by the following variant of Lemma~\ref{lemma:power-sum-estimates}.

\begin{lemma}\label{lemma:power-sum-estimates-finite}
If $R$ is a pre-$\lambda$ ring and $r \in R$, the coefficient of $\bt^{\vec{d}}$ in
\[ \log \left(  \left((1-t_1)(1-t_2)\cdots (1-t_k)(1+t_1 + \cdots + t_k)\right)^r \right) \in R\otimes_{\bbZ} \bbQ\]
is
\begin{equation}\label{eqn:coeff-exact} \begin{cases} -\sum_{md'=d}\frac{1+(-1)^{d'}}{d'} p_m'(r) & \textrm{ if } \bt^{\vec{d}}=t_i^d  \\
					-\sum_{m \vec{d'}=\vec{d}} \binom{\sum \vec{d'}}{\vec{d'}}\frac{(-1)^{\sum \vec{d'}}}{\sum \vec{d'}} p_m'(r) & \textrm{ otherwise.} \end{cases} \end{equation}
In particular, if $R \otimes \bbQ$ is normed, we find that the sum of the norms of the coefficients of $\bt^{\vec{d}}$ for a fixed total degree $d = \sum{\vec{d}}$ is bounded by
\begin{equation}\label{eqn:coeff-estimates} \sum_{m|d, m \neq{d}} (k)^{d/m} ||p'_m(r)||, \end{equation}
\end{lemma}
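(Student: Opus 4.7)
The plan is to follow the proof of Lemma~\ref{lemma:power-sum-estimates} essentially verbatim: apply Lemma~\ref{lemma:pre-lambda-power-formula} with
\[ f(t_1,\ldots,t_k) := (1-t_1)\cdots(1-t_k)(1+t_1+\cdots+t_k) \]
to obtain
\[ \log(f^r) = \sum_{m \geq 1} p_m'(r)\,\log f(t_1^m,\ldots,t_k^m). \]
Since the substitution $t_i \mapsto t_i^m$ sends $\bt^{\vec{d'}}$ to $\bt^{m\vec{d'}}$ (and preserves the distinction between pure and mixed monomials), it suffices to read off the coefficients of $\log f$ itself and then collect by $m$.

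For this, I would split $\log f = \sum_{i=1}^k \log(1-t_i) + \log(1+t_1+\cdots+t_k)$ and expand each summand, using $\log(1-t_i) = -\sum_{n\geq 1} t_i^n/n$ together with, via the multinomial theorem,
\[ \log(1+t_1+\cdots+t_k) = -\sum_{n\geq 1}\frac{(-1)^n}{n}\sum_{\sum\vec{d'}=n}\binom{n}{\vec{d'}}\bt^{\vec{d'}}. \]
For a mixed monomial $\bt^{\vec{d'}}$ only the second expansion contributes, giving coefficient $-\binom{\sum\vec{d'}}{\vec{d'}}(-1)^{\sum\vec{d'}}/\sum\vec{d'}$. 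For a pure monomial $t_i^{d'}$ the two contributions $-1/d'$ and $-(-1)^{d'}/d'$ combine to $-(1+(-1)^{d'})/d'$. Collecting the $m$-th substitutions then yields the two cases of (\ref{eqn:coeff-exact}).

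For the norm estimate (\ref{eqn:coeff-estimates}), I would fix total degree $d$ and, for each divisor $m$ of $d$, bound the sum of absolute values of the $m$-th contribution over all $\vec{d}$ with $\sum\vec{d}=d$. The multinomial identity $\sum_{\sum\vec{d'}=d/m}\binom{d/m}{\vec{d'}} = k^{d/m}$ controls the mixed part, contributing $(k^{d/m} - k)/(d/m) \cdot \|p_m'(r)\|$; the $k$ pure monomials contribute at most an additional $2k/(d/m)\cdot\|p_m'(r)\|$. Thus the total $(k^{d/m}+k)/(d/m)\cdot\|p_m'(r)\|$ is dominated by $k^{d/m}\|p_m'(r)\|$ precisely when $d/m \geq 2$, i.e.\ when $m \neq d$. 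The case $m = d$ drops out automatically: then $d/m = 1$ is odd, so $1+(-1)^1 = 0$ annihilates the pure term, and no mixed $\vec{d'}$ satisfies $\sum\vec{d'} = 1$. This is exactly why (\ref{eqn:coeff-estimates}) omits $m = d$.

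There is no substantive obstacle: the whole argument is an elementary manipulation of formal power series and multinomial coefficients. The one piece of bookkeeping to get right is the asymmetry between pure and mixed monomials in (\ref{eqn:coeff-exact}), which is forced by the factor $(1-t_1)\cdots(1-t_k)$ contributing only to the coefficients of pure monomials in $\log f$.
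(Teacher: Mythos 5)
Your proposal is correct and follows essentially the same route as the paper: the paper's proof simply says to repeat the argument of Lemma~\ref{lemma:power-sum-estimates} (i.e.\ apply Lemma~\ref{lemma:pre-lambda-power-formula} and expand, then sum norms for fixed $m$), observing that the $m=d$ term vanishes because the pure contribution is killed by $1+(-1)^1=0$ and no mixed $\vec{d'}$ has $\sum\vec{d'}=1$ --- exactly the bookkeeping you carried out. Your explicit splitting of $\log f$ and the multinomial bound $(k^{d/m}+k)/(d/m)\leq k^{d/m}$ for $d/m\geq 2$ just fills in the details the paper leaves implicit.
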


\begin{proof} The proof is the same as for Lemma~\ref{lemma:power-sum-estimates}, except that in the final summation one needs to note that $m=d$ gives zero in the first case and cannot occur in the second case.
\end{proof}

\begin{proof}[Proof of Theorem~\ref{theorem:finite-label-set}]
Convergence in the weight topology follows from Corollary~\ref{corollary:general-pattern-zeta-weight}, and convergence in the point counting topology from Lemma~\ref{lemma:convergence_poly_nolinearterms} and Theorem~\ref{theorem:convergence-criterion}. For Hadamard convergence, we also apply Theorem~\ref{theorem:convergence-criterion}, and  proceed as in the proof of Lemma~\ref{lemma:convergence_poly_nolinearterms} to prove the required absolute convergence: it suffices to study convergence of the power series
\[  \left((1-t_1)(1-t_2)\cdots (1-t_k)(1+t_1 + \cdots + t_k)\right)^{Z_X(s)} \]
using Lemma~\ref{lemma:power-sum-estimates-finite}.  The point of the latter is to exploit the factors $(1-t_1)\cdots (1-t_k)$ to cancel out the contribution from  $p'_d(r)$ in (\ref{eqn:coeff-estimates}), which otherwise would have given a term which would have obstructed convergence in the estimates below. Suppose $k \leq q^{\dim X}$. Then, using the estimate (\ref{eqn:zeta_powersum_estimate}) on the M\"obius-inverted power sums of $Z_X(s)$, and the fact that there is no $m=d$ term, we may bound
(\ref{eqn:coeff-estimates}) by
\[ C \cdot ( k^d q^{\dim X} + d q^{(2+d/2)\dim X}). \]
In particular, if $k < q^{\dim X}$, we conclude the series converges absolutely for
\[ ||t_1||_H,\ldots, ||t_k||_H \leq q^{-\dim X}. \]
This concludes the Hadamard case.
\end{proof}

\begin{remark}\label{remark:l-function-stabilization}
For $X/\bbC$ smooth, one can see that the Betti numbers
\[ \dim H^i(\Conf^{(d_1, d_2, \ldots, d_k)} X(\bbC), \bbQ) = \dim H^i( \Conf^{\overbrace{(1,1,\ldots,1)}^{\sum d_i}} X(\bbC), \bbQ)^{S_{d_1} \times S_{d_2} \times \cdots \times S_{d_k}} \]
stabilize as $(d_1, d_2, \ldots, d_k) \rightarrow \infty$ using representation stability for the cohomology of pure configuration spaces combined with the Pieri rule\footnote{We thank Nate Harman for explaining to us how the Pieri rule can be applied here.}. It would be interesting to explain the growth observed in Remark~\ref{remark:hadamard-non-convergence} from this perspective.
\end{remark}

\section{Rational curves on toric varieties} \label{section:bourqui}
In this section we apply the results of Section~\ref{section:zero-cycles} to generalize the main theorem of Bourqui's paper \cite{bourqui}, which studies moduli spaces of rational curves on split toric varieties. 

\newcommand{\bbG}{\mathbb{G}}
\subsection{Geometric setting}
We now introduce the necessary notation and give a brief overview of the geometric context of the theory of (split) toric varieties. We refer to the classical references on toric varieties (e.g. \cite{fulton}) for details.

Let $K$ be a field, $r\geq 1$ be an integer, and $U = \bbG_m^r$ a split torus of dimension~$r$ defined over~$K$. We denote by $\mathcal{X}^{\ast}(U) = \mathrm{Hom}(U, \bbG_m)$ its group of characters, and $\mathcal{X}_{\ast}(U) = \mathrm{Hom}(\bbG_m,U)$ its group of cocharacters. Both are free $\bbZ$-modules of rank~$r$, and there is a natural pairing
 $$\langle \cdot,\cdot \rangle: \mathcal{X}^{\ast}(U) \times \mathcal{X}_{\ast}(U) \to \bbZ.$$
 
 A projective and regular fan $\Sigma$ of the $\bbZ$-module $\mathcal{X}_{\ast}(U)$ defines a smooth projective split toric variety $X_{\Sigma}$ with open orbit $U$. We denote by $\Sigma(1)$ the set of the rays (that is, one-dimensional faces) of $\Sigma$. A generator $\rho_{\alpha}$ of such a ray $\alpha\in \Sigma(1)$ defines a $U$-invariant divisor $D_{\alpha}$ on $X_{\Sigma}$, and there is a short exact sequence
 $$0 \to \mathcal{X}^{\ast}(U) \to \bigoplus_{\alpha \in \Sigma(1)}\bbZ D_{\alpha} \to \mathrm{Pic}(X_{\Sigma})\to 0,$$
 where the first map is given by sending $m\in \mathcal{X}^{\ast}(U) $ to $$\sum_{\alpha} \langle m,\rho_{\alpha}\rangle D_{\alpha}.$$
 From this, we in particular get the identity
\begin{equation}\label{eqn:dimension-rank-relation} \mathrm{rk} \mathrm{Pic}(X_{\Sigma}) = |\Sigma(1)| - r.\end{equation}
An anticanonical divisor is given by $\sum_{\alpha\in \Sigma(1)}D_{\alpha}$; we denote by $\calL_{0}$ its class in the Picard group. The effective cone of $X_{\Sigma}$ is the image in $\mathrm{Pic}(X_{\Sigma})\otimes \bbR$ of the cone $\sum_{\alpha} \bbR_{\geq 0} D_{\alpha}$, so that in particular $\calL_0$ lies in the interior of the effective cone of $X_{\Sigma}$. 

Bourqui's proof introduces a regular fan $\Delta$ of the $\bbZ$-module $\mathrm{Pic}(X_{\Sigma})^{\vee}$ whose support is the dual of the effective cone of $X_{\Sigma}$. The cones of maximal dimension of $\Delta$ have dimension $\rho =\mathrm{rk} \mathrm{Pic} (X_{\Sigma})$. For every ray $i\in \Delta(1)$ we denote by $m_i$ its generator. We write
\begin{equation}\label{eqn:def_of_a} a = \mathrm{lcm}\{\langle m_i,\calL_0\rangle,\ i\in \Delta(1)\}.\end{equation}
Since $\calL_0$ is in the interior of the effective cone of $X_{\Sigma}$, this is a positive integer. In this setting, the invariant $\alpha^{*}(X_{\Sigma})$ defined in Section 4.3 of Bourqui's paper may be expressed as: 
$$\alpha^{\ast}(X_{\Sigma}) = \sum_{\substack{\delta\in \Delta\\ \dim(\delta) = \mathrm{rk} \mathrm{Pic} (X_{\Sigma})}}\prod_{i\in \delta(1)} \frac{1}{\langle m_i, \calL_0\rangle}$$
(see \cite[Remarque 5.23]{bourqui}). Note that $a^{\rho}\alpha^{*}(X_{\Sigma})$ is a positive integer. 

\subsection{M\"obius functions}\label{subsec:bourqui-mobius}
By \cite[Section 3.5]{bourqui}, to every fan $\Sigma$ there is a natural way to associate a subset $B_{\Sigma}$ of $\{0,1\}^{\Sigma(1)}$ and a M\"obius function $\mu_{B_{\Sigma}}^0: \{0,1\}^{\Sigma(1)}\to \bbZ.$ Denoting by $B_{\Sigma}^{\min}$ the minimal elements of $B_{\Sigma}$ (which by Bourqui's Lemme 3.8 contains only vectors of norm at least 2), it is straightforward from the definitions that our M\"obius function $\mu_{B_{\Sigma}^{\min}}: \bbZ_{\geq 0}^{\Sigma(1)}\to \bbZ$ from Section~\ref{subsub:mobius-functions} coincides with Bourqui's $\mu_{B_{\Sigma}}^0$ on $\{0,1\}^{\Sigma(1)}$, and is zero outside of $\{0,1\}^{\Sigma(1)}$. 


We consider the elements $\mu_{\Sigma}(\ul{e}) \in \calM_K$ such that 
\begin{equation}\label{eqn:mobius-euler-product}\prod_{x\in \bbP^1}\left(\sum_{\ul{n}}\mu^{0}_{B_{\Sigma}}(\ul{n}) \bt^{\ul{n}}\right) = \sum_{\ul{e}}\mu_{\Sigma}(\ul{e})\bt^{\ul{e}}.\end{equation}
Since the answer to Bourqui's Question 3.5 is positive (see Remark~\ref{remark:bourqui}), these are analogues in the Grothendieck ring of varieties of the elements $\mu_{\Sigma}^{\chi}(\ul{e})$ considered in Bourqui's proof. 

\begin{remark} Similarly to \cite[Proposition 1-(3)]{bourqui03} and \cite[Proposition 5.18]{bourqui}, we can show, using the universal torsor formalism, that
$$\sum_{\ul{n}}\mu^{0}_{B_{\Sigma}}(\ul{n}) \LL^{-|\ul{n}|} =(1-\LL^{-1})^{\mathrm{rk}\mathrm{Pic}(X_{\Sigma})} [X_{\Sigma}]\LL^{-\dim(X_{\Sigma})}.$$
Thus, analogously to the arithmetic case, the value at $\LL^{-1}$ of our motivic Euler product (\ref{eqn:mobius-euler-product}) may be thought of as a product of local densities with convergence factors. 

\end{remark}

The main idea of the proof of Theorem~\ref{maintheorem:bourqui} will be to reduce the convergence of the motivic height zeta function to the convergence of series of the form
\begin{equation}\label{eq:general-mobius-series}\sum_{\ul{e}\in \bbZ_{\geq 0}^{\Sigma(1)}} \mu_{\Sigma}(\ul{e}) W_{\ul{e}} T^{|\ul{e}|} \end{equation}
where the $W_{\ul{e}}$ are elements in the completed Grothendieck ring of varieties. We briefly discuss here how this convergence can be checked in the different topologies in play. 

\subsubsection{Dimensional topology}\label{subsub:dimtopdiscussion}

From the proof of Lemma~\ref{lemma:euler-product-convergence}, we see that $$\dim \mu_{\Sigma}(\ul{e}) < \frac{|\ul{e}|}{2}.$$
Thus, convergence of the series (\ref{eq:general-mobius-series})  for $|T| < \LL^{-1 + \eta}$ follows as soon as one has estimates of the form
$$\dim W_{\ul{e}} \leq \epsilon |\ul{e}|$$
 for $\epsilon$ such that $0< \epsilon < \frac12$. 
In particular, this gives us an analogue in the Grothendieck ring of varieties to Bourqui's Corollaire 3.4, which is sufficient to lift Bourqui's proof to the Grothendieck ring of varieties. 
\subsubsection{Hadamard convergence}\label{subsub:hadamtopdiscussion}
Denote $M_{\Sigma} = \sum_{\ul{n}\neq 0} |\mu_{B_{\Sigma}}^0(\ul{n})|$ and let $e_{\Sigma}$ be the minimal number of non-zero coordinates of a vector in $B_{\Sigma}$.  According to Lemma~\ref{lemma:convergence_poly_nolinearterms} and Remark~\ref{remark:hadamard-convergence-wiggleroom}, if $q> M_{\Sigma}^{1/e_{\Sigma}}$, there exists $\delta>0$ such that the series 
$$\sum_{\ul{e}}\mu_{\Sigma}(\ul{e})T^{|\ul{e}|}$$
converges absolutely for $||T||_H < q^{-1 + \delta}$. We  deduce that for $\epsilon $ such that $0< \epsilon < \delta$, if we have bounds
$$||W_{\ul{e}}||_H < q^{\epsilon |\ul{e}|},$$
then the series (\ref{eq:general-mobius-series}) converges for $||T||_{H}< q^{-1 + \delta-\epsilon}$.

\subsubsection{Point counting convergence} Point counting convergence is handled similarly to Hadamard convergence: if $\delta> 0$ is such that for every prime power $q$ the series of point counts
$$\sum_{\ul{e}}\#_{\bbF_{q}}\mu_{\Sigma}(\ul{e})T^{|\ul{e}|}$$
converges for $|T| < q^{-1 +\delta}$, then it suffices to have bounds
$$\#_{\bbF_{q}} W_{\ul{e}} < q^{\epsilon |\ul{e}|}$$
for some $\epsilon$ such that $0< \epsilon < \delta$.

\subsection{Statement}
Now that we have introduced all of the data of the problem, we can state our result more precisely. 
\begin{theorem} \label{theorem:precisebourqui} 
Let $K$ be a field and $X_{\Sigma}$ a smooth and projective split toric variety over $K$ with open orbit $U$. For every integer $d\geq 0$, we denote by $U_{0,d}$ the quasi-projective variety parameterizing $K$-morphisms $\bbP^1_K \to X_{\Sigma}$ with image intersecting $U$ and of anticanonical degree $d$. Let $\rho$ be the rank of the Picard group of $X_{\Sigma}$.
\begin{enumerate} \item 
There exists a real number $\eta >0$ such that the series
\begin{equation} \label{eqn:motheightzetaprecise} (1-(\LL T)^a)^\rho \left(\sum_{d\geq 0} [U_{0,d}] T^d\right),\end{equation}
where $a$ is the integer defined in (\ref{eqn:def_of_a}), converges for $|T| < \LL^{-1+\eta}$ in the dimensional topology. Its value at $\LL^{-1}$ is non-zero and equal to

$$ a^{\rho} \alpha^*(X_{\Sigma})\LL^{r} (1-\LL^{-1})^{-\rho} \prod_{x\in \bbP^1}\left.\left(1+ \sum_{\ul{n}}\mu_{B_{\Sigma}^{0}}(\ul{n}) T^{|\ul{n}|}\right)\right|_{T = \LL^{-1}}.  $$

\item Assume now $K = \bbF_q$ finite. Then the convergence of (\ref{eqn:motheightzetaprecise}) also holds in the point counting topology. If in the notation of \ref{subsub:hadamtopdiscussion} one has $q>M_{\Sigma}^{1/e_{\Sigma}}$, then it holds in the Hadamard topology.
\end{enumerate}
\end{theorem}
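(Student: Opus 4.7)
The plan is to follow Bourqui's approach from \cite{bourqui} for analyzing the motivic height zeta function of a split toric variety $X_\Sigma$, substituting the motivic Euler products of \cite{bilu:motiviceulerproducts} for Bourqui's formalism so as to lift the argument from rational Chow motives up to $\calM_K$, and then applying the convergence estimates of Section~\ref{section:zero-cycles} to obtain the Hadamard conclusion. The first step is the Cox construction: a $K$-morphism $\bbP^1 \to X_\Sigma$ meeting $U$ is given, up to the action of the rank-$\rho$ Picard torus, by a tuple $(s_\alpha)_{\alpha \in \Sigma(1)}$ of global sections of line bundles on $\bbP^1$ such that, for every non-face of $\Sigma$, the corresponding sections have no common zero. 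This presents $U_{0,d}$ as a Picard-torus bundle over a disjoint union, indexed by multi-degrees $\vec{e} \in \bbZ_{\geq 0}^{\Sigma(1)}$ with anticanonical degree $|\vec{e}| = d$, of spaces of pattern-avoiding tuples of effective zero-cycles on $\bbP^1$, where the forbidden patterns are precisely the set $B_\Sigma$ of Section~\ref{subsec:bourqui-mobius}.

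Consequently $[U_{0,d}]$ is, up to an explicit torus factor, a sum of classes $[\zerocycle{e}{A(B_\Sigma^{\min})}]$ with $|\vec{e}| = d$. Applying Lemma~\ref{lemma:local_moebius_generating} and unraveling the resulting motivic Euler product, the generating series takes the shape
\[ \sum_{d \geq 0} [U_{0,d}]\, T^d \;=\; \text{(explicit torus factor)} \cdot Z_{\bbP^1}^{\Kap}(T)^{|\Sigma(1)|} \cdot \Bigl( 1 + \sum_{\vec{e} \neq 0} \mu_\Sigma(\vec{e})\, T^{|\vec{e}|} \Bigr), \]
modulo the bookkeeping of restricting $\bbZ^{\Sigma(1)}$ to the effective cone and of the quotient map $\bbZ^{\Sigma(1)} \to \mathrm{Pic}(X_\Sigma)$ controlling the torus action. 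Using (\ref{eqn:dimension-rank-relation}), this prefactor has a pole of order $\rho$ at $T = \LL^{-1}$, and the congruence structure of the dual fan $\Delta$, governed by the integer $a$ of (\ref{eqn:def_of_a}), makes $(1-(\LL T)^a)^\rho$ --- rather than $(1-\LL T)^\rho$ --- the correct pole-clearing factor uniformly across residue classes modulo $a$.

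Once the generating function is in this form, convergence in each topology reduces to controlling the tail series $\sum_{\vec{e}} \mu_\Sigma(\vec{e}) W_{\vec{e}} T^{|\vec{e}|}$, where $W_{\vec{e}}$ absorbs the Kapranov and pole-clearing contributions. Dimensional convergence for $|T| < \LL^{-1+\eta}$ follows from the bound $\dim \mu_\Sigma(\vec{e}) < |\vec{e}|/2$ recorded in \ref{subsub:dimtopdiscussion}, once one checks that $\dim W_{\vec{e}}$ grows at most linearly in $|\vec{e}|$; point-counting convergence is analogous; and Hadamard convergence under the hypothesis $q > M_\Sigma^{1/e_\Sigma}$ follows immediately from \ref{subsub:hadamtopdiscussion}. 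For the special value at $T = \LL^{-1}$, I would expand the pole-clearing factor around $T = \LL^{-1}$ and sum the residue contributions of the maximal cones of $\Delta$; the combinatorics of this sum precisely produces $a^\rho \alpha^*(X_\Sigma) \LL^r (1-\LL^{-1})^{-\rho}$, multiplied by the value of the motivic Euler product at $T = \LL^{-1}$.

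The main obstacle is the lattice/torus bookkeeping in the second step: one must carry out the Cox translation at the level of $\calM_K$ rather than rational Chow motives. This works \emph{precisely} because Remark~\ref{remark:bourqui} gives a positive answer to Bourqui's Question 3.5 using the motivic Euler products of \cite{bilu:motiviceulerproducts}; without this input one only recovers Bourqui's statement after tensoring with $\bbQ$ and specializing. A secondary subtlety is the residue analysis isolating the value at $\LL^{-1}$, which amounts to repeating Bourqui's combinatorial argument on $\Delta$ and is purely formal. Once these two points are in place, the three convergence statements are immediate from the estimates of Section~\ref{section:zero-cycles}, which is why the explicit bound $q > M_\Sigma^{1/e_\Sigma}$ appears unchanged in the Hadamard case.
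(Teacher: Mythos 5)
Your high-level plan (follow Bourqui, lift his argument to $\calM_K$ via the motivic Euler products of \cite{bilu:motiviceulerproducts} as in Remark~\ref{remark:bourqui}, then feed the resulting series into the estimates of \S\ref{subsub:dimtopdiscussion}--\ref{subsub:hadamtopdiscussion}) is indeed the approach of the paper, but the concrete generating-function identity you base everything on is false, and the error swallows exactly the part of the proof where all the work lies. A morphism $f:\bbP^1\to X_\Sigma$ meeting $U$ has multidegree $(d_\alpha)=(\deg f^*D_\alpha)$ satisfying $\sum_\alpha\langle m,\rho_\alpha\rangle d_\alpha=0$ for every character $m\in\mathcal{X}^*(U)$, since $\sum_\alpha\langle m,\rho_\alpha\rangle D_\alpha$ is principal. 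Hence the multidegrees do \emph{not} range over all of $\bbZ_{\geq 0}^{\Sigma(1)}$: they range over the lattice points of the dual of the effective cone inside $\mathrm{Pic}(X_\Sigma)^\vee$, a rank-$\rho$ lattice. Your proposed shape $(\text{torus factor})\cdot Z^{\Kap}_{\bbP^1}(T)^{|\Sigma(1)|}\cdot\bigl(1+\sum_{\vec e\neq 0}\mu_\Sigma(\vec e)T^{|\vec e|}\bigr)$, which implicitly sums freely over $\bbZ_{\geq 0}^{\Sigma(1)}$, would produce a pole at $T=\LL^{-1}$ of order $|\Sigma(1)|=r+\rho$, which is inconsistent with the statement you are proving ($(1-(\LL T)^a)^\rho$ would not clear it) and with the leading constant $a^\rho\alpha^*(X_\Sigma)$, which is a sum over the maximal cones of the dual fan $\Delta$ and cannot be seen at all from your formula. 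The phrase ``modulo the bookkeeping of restricting $\bbZ^{\Sigma(1)}$ to the effective cone and of the quotient map'' is precisely where the theorem lives: after M\"obius inversion one must sum, for each fixed $\ul{e}$, over lattice points $y$ of a \emph{truncated} cone, and this is handled in the paper (following Bourqui's Lemmes 5.26 and 5.28) by decomposing over subsets $A\subset\Sigma(1)$, cones $\delta\in\Delta$ and subsets $J$, splitting each piece into a rational factor $C_\delta(T)=\prod_{i\in I(\delta)}\bigl(\tfrac{1}{1-(\LL T)^{\langle m_i,\calL_0\rangle}}-1\bigr)$ --- whose poles are the reason the exponent $a=\mathrm{lcm}\langle m_i,\calL_0\rangle$ appears --- and a remainder $Q_{\delta,\ul e}$ or $R_{\delta,\ul e}$ controlled by the lattice-point and pairing estimates of \cite{bourqui03} ($\#\{y\}$ polynomial in $|\ul e|$ and the $h_\alpha$, and $0\leq\langle y,\calL_0\rangle\leq C(|\ul e|+\sum(h_\alpha-e_\alpha))$).

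Without that decomposition you cannot produce the bounds $\dim W_{\ul e}\leq\epsilon|\ul e|$ and $\|W_{\ul e}\|_H\leq q^{\epsilon|\ul e|}$ that \S\ref{subsub:dimtopdiscussion}--\ref{subsub:hadamtopdiscussion} require, so none of the three convergence claims is actually reduced to \S\ref{section:zero-cycles}; and your ``purely formal residue analysis'' must in reality show that every term with $A\neq\varnothing$, $J\neq\varnothing$, or $\dim\delta<\rho$ has value zero at $\LL^{-1}$ (Propositions~\ref{prop:A-empty} and~\ref{prop:A-nonempty}), with non-vanishing of the surviving term coming from Lemma~\ref{lemma:euler-product-convergence}. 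Note also that Remark~\ref{remark:bourqui} (the answer to Bourqui's Question 3.5) only supplies the identity relating the pattern-avoiding generating series to the motivic Euler product of the M\"obius polynomial; it does nothing to address the cone summation, which is the genuine obstacle. So the proposal names the right ingredients but the step that would make them fit together --- the correct torsor parametrization constrained to $\mathrm{Pic}(X_\Sigma)^\vee$ and Bourqui's cone-by-cone analysis of the resulting lattice sums --- is missing, and the formula offered in its place is wrong.
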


\begin{remark}\label{remark:bourqui_extra_factors} The statement in \cite{bourqui} is for the series
$$(1-\LL T)^{\rho} \sum_{d\geq 0} [U_{0,d}] T^d.$$
Indeed, as we will see below, the proof consists in writing the series $Z(T)$ as a finite sum of terms of the form $C_i(T) R_i(T)$ where $C_i$ is a rational function such that $(1-\LL T)^{\rho} C_i(T)$ has no pole at $\LL^{-1}$ and $R_i(T)$ is a series which converges for $|T| < \LL^{-1 + \eta}$. Thus, while multiplying by $(1- \LL T)^{\rho}$ is enough to be able to evaluate at $\LL^{-1}$ (and thus sufficient for Bourqui's purposes), to eliminate some potential other poles of the rational functions $C_i(t)$ and obtain convergence for all $|T|< \LL^{-1+\eta}$ one needs to multiply by some additional factors.
\end{remark}
\subsection{Proof of the theorem}
The rest of the section is devoted to a proof of Theorem~\ref{theorem:precisebourqui}. This requires a careful analysis of Bourqui's proof, checking that the convergence statements can be adapted to our more general setting. Additionally to the dimensional bounds from \cite{bourqui}, we will also need some estimates from \cite{bourqui03}.

\newcommand{\mmot}{\mathrm{mot}}
Following Bourqui, we denote
$$Z_{\bbP^1,U,h_0}^{\mmot}(T) = \sum_{d\geq 0} [U_{0,d}]T^d.$$
Essentially, the proof consists in writing $Z_{\bbP^1,U,h_0}^{\mmot}(T)$ as a finite sum of series of the form (\ref{eq:general-mobius-series}), and checking convergence for each of them following the discussion in Section~\ref{subsec:bourqui-mobius}.

The first important step in Bourqui's proof is a universal torsor argument, expressing each space $[U_{0,d}]$ in terms of certain spaces of zero-cycles on $\bbP^1$. This leads to identity (5.4) in \cite{bourqui}, after which, in the beginning of section 5.4, motivic M\"obius inversion is applied. The resulting series is decomposed into a finite number of series depending on different parameters, and the contributions of which are studied separately:

\begin{equation}\label{eq:motivic-height-zeta-decomp}Z_{\bbP^1,U,h_0}^{\mmot}(T) = (\LL-1)^{-\rho} \sum_{A\subset \Sigma(1)}(-1)^{|A|} \sum_{\delta\in \Delta} Z_{A,\delta}(T)\end{equation}
where $\Delta$ is a fan with support the dual of the effective cone of $X_{\Sigma}$.  We do not give more details here, because, as Bourqui remarks in the beginning of Section 5.4, all of his computations not involving convergence issues are valid in the Grothendieck ring of varieties $\calM_{K}$.

To study convergence, one has to distinguish between different cases. We first study the case $A = \varnothing$.

For every $\delta$, there is a further decomposition 
\begin{equation} \label{eqn:zeta-empty-decomp}Z_{\varnothing,\delta} = \sum_{J\subset\Sigma(1)}(-1)^{|J|}Z_{\emptyset,\delta,J}(T).\end{equation}
We are going to show, as in Bourqui's proof, that terms $Z_{\varnothing, \delta,J}(T)$ where $J = \varnothing$ and $\delta$ is of maximal dimension (that is, $\dim(\delta) = \rho$) give the main pole.

\begin{proposition}\label{prop:A-empty} \hfill
\begin{enumerate} \item Let $\delta\in \Delta$. There exists a real number $\eta>0$ such that the series
$$(1-(\bbL T)^a)^{\dim(\delta)}Z_{\varnothing,\delta,J}(T)$$
converges for $|T|<\LL^{-1 + \eta}$ in the dimensional topology. If $K$ is finite, its specialization via the zeta measure converges in the point counting topology.  When $q> M_{\Sigma}^{1/e_{\Sigma}}$, it converges in the Hadamard topology.

\item If $J$ is nonempty and $\delta$ is of maximal dimension, the value of the series from (1) at $\LL^{-1}$ is zero.
\item The value of the series
$$(1-(\LL T)^a)^{\rho}\sum_{\substack{\delta\in \Delta\\ \dim(\delta) = r}}Z_{\varnothing, \delta, \varnothing}(T)$$
at $T = \LL^{-1}$ is
$$a^{\rho}\alpha^*(X_{\Sigma})\LL^{|\Sigma(1)|}\sum_{\ul{e}\in \bbZ_{\geq 0}^{\Sigma(1)}} \mu_{\Sigma}(\ul{e})\LL^{-|\ul{e}|},$$
which is a non-zero element of $\widehat{\calM}_K.$
\end{enumerate}
\end{proposition}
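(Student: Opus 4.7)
The plan is to follow the structure of Bourqui's argument in \cite{bourqui}, but replace each convergence check done in the dimensional topology by the parallel estimates in the point-counting and Hadamard topologies described in \S\ref{subsec:bourqui-mobius}. The starting point is to rewrite each $Z_{\varnothing,\delta,J}(T)$ as a series of the form
\[
\sum_{\ul{e} \in \bbZ_{\geq 0}^{\Sigma(1)}} \mu_{\Sigma}(\ul{e}) \, W_{\ul{e}}(\delta,J) \, T^{|\ul{e}|}
\]
where the coefficients $W_{\ul{e}}(\delta,J)$ come from the geometric/combinatorial data attached to $(\delta, J)$ in Bourqui's construction (roughly, these are classes of parameter spaces depending on the pairing of $\ul{e}$ with the generators $m_i$ of the rays of $\delta$). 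The main input from our earlier work is \S\ref{subsub:dimtopdiscussion}--\S\ref{subsub:hadamtopdiscussion}: once one has bounds $\dim W_{\ul{e}}(\delta,J) \leq \epsilon |\ul{e}|$ with $\epsilon < 1/2$, and the analogous bounds $\|W_{\ul{e}}(\delta,J)\|_H \leq q^{\epsilon |\ul{e}|}$ and $\#_{\bbF_q} W_{\ul{e}}(\delta,J) \leq q^{\epsilon |\ul{e}|}$ after suitably extracting the factor $(1-(\LL T)^a)^{\dim\delta}$, convergence in all three topologies will follow from Lemma~\ref{lemma:convergence_poly_nolinearterms} and Remark~\ref{remark:hadamard-convergence-wiggleroom}.

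For part (1), I would isolate the pole at $T = \LL^{-1}$ by factoring out the rational function controlling the behavior on the cone $\delta$: the exponents $\langle m_i, \calL_0\rangle$ divide $a$ by definition of $a$, so after multiplying by $(1-(\LL T)^a)^{\dim\delta}$ the resulting rational prefactor has no pole in the disk $|T| < \LL^{-1+\eta}$ for $\eta$ small. The remaining series is a motivic Euler product twisted by constant coefficients whose convergence reduces to the $W_{\ul{e}}$-bounds above. The dimensional estimates are in \cite{bourqui}; the point-counting bounds are either already available in \cite{bourqui03} or can be extracted from the same geometric description; the Hadamard bound is obtained by combining the weight bounds from Deligne's theory with the point-counting bounds, as in the argument of Lemma~\ref{lemma:convergence_poly_nolinearterms} (this last step is the main technical obstacle, since one must verify that the Frobenius eigenvalues on the relevant spaces stay within the expected Deligne bound uniformly in $\ul{e}$).

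For part (2), with $\delta$ of maximal dimension and $J$ nonempty, one shows that the series $Z_{\varnothing,\delta,J}(T)$ already has a zero of order $\geq |J|$ at $T = \LL^{-1}$: this is a direct consequence of Bourqui's computation (each element of $J$ contributes a factor vanishing at $T = \LL^{-1}$), so after multiplying by $(1-(\LL T)^a)^{\dim\delta} = (1-(\LL T)^a)^{\rho}$, the order of vanishing at $\LL^{-1}$ is still strictly positive. Hence the value there is zero.

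For part (3), I would compute the value at $T = \LL^{-1}$ by using the explicit form of the rational prefactor and evaluating the absolutely convergent Euler product. The rational function $(1-(\LL T)^a)^{\rho}/\prod_{i\in\delta(1)}(1-(\LL T)^{\langle m_i,\calL_0\rangle})$ specializes at $T = \LL^{-1}$ to $a^{\rho}/\prod_{i\in\delta(1)}\langle m_i, \calL_0\rangle$ by l'Hôpital, and summing over maximal cones $\delta$ gives precisely the factor $a^{\rho}\alpha^{\ast}(X_\Sigma)$. The remaining factor $\LL^{|\Sigma(1)|}$ accounts for the dimension normalization of the universal torsor, and the series $\sum_{\ul{e}} \mu_{\Sigma}(\ul{e})\LL^{-|\ul{e}|}$ is the value of the motivic Euler product (\ref{eqn:mobius-euler-product}) at $T = \LL^{-1}$, which is nonzero in $\widehat{\calM}_K$ because the constant term is $1$ and the higher-order terms have strictly negative dimension by Lemma~\ref{lemma:euler-product-convergence}.
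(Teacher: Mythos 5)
Your overall architecture coincides with the paper's: decompose $Z_{\varnothing,\delta,J}(T)$ along the M\"obius function as $\sum_{\ul{e}}\mu_{\Sigma}(\ul{e})Z_{\varnothing,\delta,J,\ul{e}}(T)$, pull out a rational prefactor $C_{\delta}(T)$ whose poles are cancelled by $(1-(\LL T)^a)^{\dim(\delta)}$ since each exponent $\langle m_i,\calL_0\rangle$ divides $a$, and reduce convergence to coefficient bounds fed into \S\ref{subsub:dimtopdiscussion}--\S\ref{subsub:hadamtopdiscussion}. Your parts (2) and (3) also match: in (2) the paper locates the zero in the prefactor (for $J\neq\varnothing$ and $\delta$ maximal, $I(\delta)$ is a \emph{strict} subset of $\delta(1)$, so $C_{\delta}(T)$ has strictly fewer than $\dim(\delta)$ pole factors), which is the precise form of your claim that elements of $J$ contribute vanishing factors; your l'H\^opital evaluation and the identification of $a^{\rho}\alpha^*(X_{\Sigma})$ in (3) agree with the paper, as does the appeal to Lemma~\ref{lemma:euler-product-convergence} for non-vanishing.

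The gap is in part (1), at exactly the point you flag as ``the main technical obstacle'': you never establish the bound $\|W_{\ul{e}}\|_H\leq q^{\epsilon|\ul{e}|}$, and the route you propose --- Deligne bounds on Frobenius eigenvalues of the spaces underlying $W_{\ul{e}}$, verified uniformly in $\ul{e}$ --- is both unproven and unnecessary. The content of Bourqui's Lemme 5.26(ii), as exploited in the paper, is that $Z_{\varnothing,\delta,J,\ul{e}}(T)=\LL^{|\Sigma(1)|-|\ul{e}|}C_{\delta}(T)Q_{\delta,\ul{e}}(T)$ with $Q_{\delta,\ul{e}}(T)=\sum_{y}(\LL T)^{\langle y,\calL_0\rangle}$ a sum over lattice points of a truncated cone; by the proof of Lemme 3 of \cite{bourqui03}, this sum has at most $|\ul{e}|^{|\Sigma(1)|}$ terms and $0\leq\langle y,\calL_0\rangle\leq C|\ul{e}|$. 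So the coefficients lie in $\bbZ_{\geq 0}[\LL,T]$ with polynomially many monomials and linearly bounded exponents, and the dimensional, point-counting and Hadamard bounds all follow by inspection: for $\|T\|_H\leq q^{-1+\eta}$ one gets $\|Q_{\delta,\ul{e}}(T)\|_H\leq|\ul{e}|^{|\Sigma(1)|}q^{\eta C|\ul{e}|}\leq q^{\epsilon|\ul{e}|}$. No cohomological information about these parameter spaces is needed; the only Deligne-type input in the whole argument is already packaged in Lemma~\ref{lemma:convergence_poly_nolinearterms} and Remark~\ref{remark:hadamard-convergence-wiggleroom}, applied to the fixed Euler product $\sum_{\ul{e}}\mu_{\Sigma}(\ul{e})T^{|\ul{e}|}$ over $\bbP^1$, which is where the hypothesis $q>M_{\Sigma}^{1/e_{\Sigma}}$ enters. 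Without this explicit description of the coefficients your proposal does not yet deliver the Hadamard (nor, strictly speaking, the point-counting) convergence asserted in (1); with it, the argument closes exactly as in the paper.
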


\begin{proof}
The key step in Bourqui's argument is to decompose
$$Z_{\varnothing,\delta,J}(T) = \sum_{\ul{e}}\mu_{\Sigma}(\ul{e})Z_{\varnothing,\delta,J,\ul{e}}(T)$$
and rewrite
$Z_{\varnothing,\delta,J,\ul{e}}(T)$, as in the statement of \cite[Lemme 5.26, (ii)]{bourqui}; it is a geometric series over a truncated cone denoted by $\mathcal{C}(\delta(1))_{J,\ul{e}}$, and one separates it into a product of infinite and finite geometric series.  What is important for us here is that through this procedure, we can write 
$$Z_{\varnothing,\delta,J,\ul{e}}(T) = \LL^{|\Sigma(1)|-|\ul{e}|}C_{\delta}(T)Q_{\delta,\ul{e}}(T)$$
where $C_{\delta}(T)$ is a rational function (coming from the factors which are infinite geometric series) not depending on $\ul{e}$, but only on $J$ and on $\delta$, and $Q_{\delta,\ul{e}}(T)$ is a polynomial (coming from the finite geometric sums), of the form

$$Q_{\delta,\ul{e}}(T) = \sum_{y} (\LL T)^{\langle y, \calL_0\rangle}.$$
Here the summation is over elements $y$ of  the set denoted by $\mathcal{C}(I_{J,2})_{J, \ul{e}}$ by Bourqui, the size of which is bounded by
$|\ul{e}|^{|\Sigma(1)|}$ according to the proof of Lemme 3 in \cite{bourqui03} (see bottom of page 192). Note that Bourqui's polynomial $P_{I'}(T)$ is what we denote $\LL^{|\Sigma(1)|}\LL^{-|\ul{e}|} Q_{\delta,\ul{e}}(T)$.

We have

$$(1-(\LL T)^a)^{\dim(\delta)} Z_{\emptyset, \delta,J}(T) = (1-(\LL T)^a)^{\dim(\delta)}C_{\delta}(T)\LL^{|\Sigma(1)|}\sum_{\ul{e}}\mu_{\Sigma}(\ul{e})\LL^{-|\ul{e}|}Q_{\delta,\ul{e}}(T). $$

We see from the expression $$C_{\delta}(T) = \prod_{i \in I(\delta)}\left(\frac{1}{1-(\LL T)^{\langle m_i, \calL_0\rangle}} - 1\right)$$ where $I(\delta)$ is a subset of $\delta(1)$ (the set of rays of the cone $\delta$), and from the definition of $a$, that the rational function
$$(1-(\LL T)^a)^{\dim(\delta)}C_{\delta}(T)$$
has no poles,  and we therefore may turn to the analysis of the series

$$\sum_{\ul{e}}\mu_{\Sigma}(\ul{e})\LL^{-|\ul{e}|}Q_{\delta,\ul{e}}(T).$$

Let us first consider the dimensional topology. By the top of page 193 in the proof of Lemme 3 in \cite{bourqui03}, for $y\in \mathcal{C}(I_{J,2})_{J,\vec{e}}$, we have bounds
\begin{equation}\label{eq:pairing_bounds}0\leq \langle y, \calL_0\rangle \leq C |\ul{e}|\end{equation}
for an explicit positive constant $C$, so that for $\eta>0$ sufficiently small and $|T|< \LL^{-1+\eta}$, the polynomial $Q_{\delta,\ul{e}}(T)$ takes values with dimension bounded by $\epsilon |e|$ for some  small $\epsilon>0$, and using the discussion in \ref{subsub:dimtopdiscussion} we may conclude.

We now turn to the Hadamard and point counting topologies. Using (\ref{eq:pairing_bounds}) together with the fact that the polynomial $Q_{\delta,\ul{e}}(T)$ has polynomially many terms, for $\eta$ sufficiently small and $||T||_H<  \LL^{-1+\eta}$, the values of  $Q_{\delta,\ul{e}}(T)$ are bounded by  $q^{\epsilon |e|}$ for some small $\epsilon>0$. By the discussion in \ref{subsub:hadamtopdiscussion}, we see that our series converges for $||T||_H< \LL^{-1+\eta}$ for some $\eta>0$.  We proceed similarly in the point counting case. This proves the first statement.

We now come to the second statement. From \cite[Lemme 5.26,(ii)]{bourqui}, we see that if $J$ is nonempty and $\delta$ of maximal dimension, then $I(\delta)$ is a strict subset of $\delta(1)$, so that $C_{\delta}(T)$ comprises strictly less than $\dim(\delta)$ factors, and so 
$$(1-(\LL T)^a)^{\dim(\delta)}C_{\delta}(T)$$
has a zero at $\LL^{-1}$. This together with the convergence proved above yields the result. 

It remains to prove the last statement. Assume $J$ is empty, and let $\delta$ be of maximal dimension.  In this case, from \cite[Lemme 5.26,(ii)]{bourqui}, we see that $I(\delta) = \delta(1)$ and that our polynomial $Q_{\delta,\ul{e}}$ is in fact constant equal to 1. A quick  computation then shows that the value of 
$$(1-(\LL T)^a)^{\mathrm{rk}\mathrm{Pic}(X_{\Sigma})} \sum_{\substack{\delta\in \Delta\\ \dim(\delta) =\mathrm{rk}\mathrm{Pic}(X_{\Sigma})} }C_{\delta}(T)$$
at $T = \LL^{-1}$ equals $a^{\rho}\alpha^{*}(X_{\Sigma})$. From this we deduce the value of the limit. It follows from Lemma~\ref{lemma:euler-product-convergence} that the limit is non-zero.

\end{proof}

We now come to the case $A \neq \varnothing$. The argument is similar: there is a decomposition
\begin{equation}\label{eqn:zeta-nonempty-decomp} Z_{A,\delta}(T) = \sum_{J\subset \Sigma(1)\setminus A}(-1)^{|J|}Z_{A,\delta,J}(T),\end{equation}
where
$$Z_{A,\delta,J}(T) = \sum_{\ul{e}} \mu_{\Sigma}(\ul{e})Z_{A,\delta,J,\ul{e}}(T).$$

\begin{proposition}\label{prop:A-nonempty} Let $\delta$ be a cone of $\Delta$, $A$ a non-empty subset of $\Sigma(1)$ and $J$ a subset of $\Sigma(1)\setminus A$. There exists $\eta>0$ such that the series
$$(1-(\LL T)^a) ^{\dim(\delta)} Z_{A,\delta,J}(T)$$
converges for $|T| < \LL^{-1+\eta}$ in the dimensional topology. If $k = \bbF_{q}$ is finite, it converges in the point counting topology. If moreover $q> M_{\Sigma}^{1/e_{\Sigma}}$, then it also converges in the Hadamard topology. If $\dim(\delta) = \rho$, then the value of this series at $\LL^{-1}$ is zero. 
\end{proposition}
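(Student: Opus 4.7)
The plan is to mirror the argument used for Proposition~\ref{prop:A-empty}, with $A$ now nonempty playing an analogous role to the nonempty $J$ case in part (2) of that proposition. As a first step, I would apply motivic Möbius inversion to decompose
\[ Z_{A,\delta,J}(T)=\sum_{\ul{e}}\mu_\Sigma(\ul{e})\,Z_{A,\delta,J,\ul{e}}(T), \]
and then, invoking the extended form of Bourqui's \cite[Lemme~5.26]{bourqui} (the computation goes through verbatim for nonempty $A$, since $A$ only modifies which rays contribute an infinite versus a finite geometric sum), write
\[ Z_{A,\delta,J,\ul{e}}(T)=\LL^{|\Sigma(1)|-|\ul{e}|}\,C_{A,\delta,J}(T)\,Q_{A,\delta,J,\ul{e}}(T), \]
where $C_{A,\delta,J}(T)$ is a rational function of the form $\prod_{i\in I(A,\delta,J)}\bigl(\frac{1}{1-(\LL T)^{\langle m_i,\calL_0\rangle}}-1\bigr)$ for some subset $I(A,\delta,J)\subset\delta(1)$ depending on $A$, $\delta$ and $J$, and $Q_{A,\delta,J,\ul{e}}(T)=\sum_{y}(\LL T)^{\langle y,\calL_0\rangle}$ is a polynomial indexed by a truncated cone analogous to $\mathcal{C}(I_{J,2})_{J,\ul{e}}$.

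Next, I would show that $(1-(\LL T)^a)^{\dim\delta}C_{A,\delta,J}(T)$ is holomorphic in a disk $|T|<\LL^{-1+\eta'}$: each factor of $C_{A,\delta,J}$ has its only pole at a root of $1-(\LL T)^{\langle m_i,\calL_0\rangle}$, and $\langle m_i,\calL_0\rangle$ divides $a$ by the definition \eqref{eqn:def_of_a}, so $(1-(\LL T)^a)$ cancels each such pole. Since $|I(A,\delta,J)|\leq \dim\delta$, all poles are killed, leaving a polynomial that is bounded on compact sets. Then, to control the remaining series $\sum_{\ul{e}}\mu_\Sigma(\ul{e})\LL^{-|\ul{e}|}Q_{A,\delta,J,\ul{e}}(T)$, I would reuse the bound $0\leq\langle y,\calL_0\rangle\leq C|\ul{e}|$ and the polynomial-in-$|\ul{e}|$ bound on the number of lattice points, as in the proof of Proposition~\ref{prop:A-empty}; convergence then follows from \S\ref{subsub:dimtopdiscussion} for the dimensional topology, and from the parallel point-counting and Hadamard estimates (under $q>M_\Sigma^{1/e_\Sigma}$) of \S\ref{subsub:hadamtopdiscussion} for the other two topologies.

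For the vanishing of the value at $T=\LL^{-1}$ when $\dim\delta=\rho$, the main point will be a combinatorial verification that $A\neq\emptyset$ forces the strict inequality $|I(A,\delta,J)|<\dim\delta$: the rays in $A$ correspond to factors that are \emph{removed} from the infinite geometric product and replaced either by a finite geometric sum or by a monomial contribution absorbed into $Q_{A,\delta,J,\ul{e}}$. Thus $(1-(\LL T)^a)^{\dim\delta}C_{A,\delta,J}(T)$ has a zero of order at least $\dim\delta-|I(A,\delta,J)|\geq 1$ at $T=\LL^{-1}$, and multiplying by the bounded value of the $\ul{e}$-series at $\LL^{-1}$ (which is finite by the convergence established above) gives zero.

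The main obstacle I anticipate is the bookkeeping in the first two steps: one must unwind Bourqui's universal-torsor computation carefully enough to identify the precise subset $I(A,\delta,J)\subset\delta(1)$ indexing the infinite geometric factors of $C_{A,\delta,J}$ and to justify the estimate $\langle y,\calL_0\rangle\leq C|\ul{e}|$ on the support of $Q_{A,\delta,J,\ul{e}}$ in the $A\neq\emptyset$ case. Once these geometric identifications are in place, the convergence statements and the vanishing at $\LL^{-1}$ follow by the same mechanism as in Proposition~\ref{prop:A-empty}.
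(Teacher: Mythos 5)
There is a genuine gap in your second step: the structural decomposition you posit for nonempty $A$ is not the right one, and the estimates you import from the $A=\varnothing$ case break down precisely at the new feature of this case. The paper invokes \cite[Lemme 5.28 (ii)]{bourqui} (not an ``extended'' Lemme 5.26) to write $Z_{A,\delta,J,\ul{e}}(T)=C_{\delta}(T)R_{\delta,\ul{e}}(T)$, where $R_{\delta,\ul{e}}(T)$ is a genuine power series, not a polynomial: each ray $\alpha\in A$ contributes an \emph{infinite} sum over integers $h_{\alpha}\geq e_{\alpha}$, weighted by $\LL^{-\sum_{\alpha\in A}(h_{\alpha}-e_{\alpha})}$, and these sums cannot be pulled out into the rational prefactor because they are coupled both to $\ul{e}$ (through $h_{\alpha}\geq e_{\alpha}$) and to the lattice-point sum over $y$. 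Concretely, the $\ul{e}$-term is
$$\mu_{\Sigma}(\ul{e})\,\LL^{-|\ul{e}|}\sum_{\substack{(h_{\alpha})_{\alpha\in A}\\ h_{\alpha}\geq e_{\alpha}}}\LL^{-\sum_{\alpha\in A}(h_{\alpha}-e_{\alpha})}\sum_{y}(\LL T)^{\langle y,\calL_0\rangle},$$
where the $y$-set has size polynomial in the $h_{\alpha}$ and $e_{\alpha}$, and the correct bound (from the proof of Lemme 4 in \cite{bourqui03}) is $0\leq\langle y,\calL_0\rangle\leq C\bigl(\sum_{\alpha\in A}(h_{\alpha}-e_{\alpha})+|\ul{e}|\bigr)$, not $\langle y,\calL_0\rangle\leq C|\ul{e}|$ as you assert. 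With your claimed polynomial $Q_{A,\delta,J,\ul{e}}$ and bound in $|\ul{e}|$ alone, the quantity you need to sum over is in fact infinite, so your convergence argument does not get off the ground. The essential extra work of this proposition is exactly the balancing act you skip: for $|T|<\LL^{-1+\eta}$ with $\eta$ small, the decay $\LL^{-(1-\epsilon)\sum_{\alpha}(h_{\alpha}-e_{\alpha})}$ beats the growth $(\LL T)^{\langle y,\calL_0\rangle}$, so the inner sum over $(h_{\alpha})$ converges (in Hadamard norm it contributes a factor $\bigl(\frac{1}{1-q^{-1+\epsilon}}\bigr)^{|A|}$), leaving a term of size $q^{\epsilon|\ul{e}|}$ to which the discussions in \ref{subsub:dimtopdiscussion} and \ref{subsub:hadamtopdiscussion} apply.

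Your treatment of the vanishing at $T=\LL^{-1}$ when $\dim(\delta)=\rho$ is in the right spirit and matches the paper's: the rational prefactor $(1-(\LL T)^a)^{\dim(\delta)}C_{\delta}(T)$ has no poles and acquires a zero at $\LL^{-1}$ because fewer than $\dim(\delta)$ infinite geometric factors occur, and one multiplies by the (finite) value of the convergent series. But that conclusion is only available once the convergence of the $\ul{e}$-series, with the corrected structure above, has been established.
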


\begin{proof} The proof proceeds similarly to the one of Proposition~\ref{prop:A-empty}. According to \cite[Lemme 5.28 (ii)]{bourqui}, we may write

$$Z_{A,\delta,T,\ul{e}}(T) = C_{\delta}(T) R_{\delta,\ul{e}}(T),$$
where $C_{\delta}(T)$ is a rational function which does not depend on $\ul{e}$, and  $R_{\delta,\ul{e}}(T)$ (denoted by $R_{I'}(T)$ in Bourqui's paper) is a power series with coefficients in $Z[\LL]$. As in the previous proposition, we again have that the rational function
$$(1-(\LL T)^a)^{\dim(\delta)} C_{\delta}(T)$$
has no poles, and has a zero at $\LL^{-1}$ if $\delta$ is of maximal dimension. Thus, essentially the only difference with the previous case is that the polynomial factor $Q_{\delta, \ul{e}}$ has been replaced with a non-polynomial one, and we need to work a little bit more to get sufficient bounds. 

%

Writing $R_{\delta,\ul{e}}(T)$ out explicitly, we see that we are interested in the convergence properties of the series

\begin{equation}\label{eqn:mobius_series_Anonempty} \sum_{\ul{e}} \mu_{\Sigma}(\ul{e}) \LL^{-|\ul{e}|} \sum_{\substack{(h_{\alpha})_{\alpha\in A} \\ h_{\alpha} \geq e_{\alpha}}} \LL^{-\sum_{\alpha\in A}(h_{\alpha} - e_{\alpha})}\sum_{y} (\LL T ) ^{\langle y, \calL_{0}\rangle} \end{equation}
where the sum over $y$ is taken over a finite subset of the dual of the effective cone, the size of which is bounded polynomially in the $h_{\alpha}$ and $e_{\alpha}$, according to the middle of page 197 in the proof of Lemme 4 in \cite{bourqui03}. In the same reference, we also see that for $y$ in this set, there is a positive constant $C$ such that

$$0 \leq \langle y,\calL_{0}\rangle \leq C \left(\sum_{\alpha\in A}(h_{\alpha}-e_{\alpha}) + |\ul{e}|\right).$$ 

Using the latter, we see that for $|T|< \LL^{-1+\eta}$ the dimension of the term corresponding to $(h_{\alpha})$ is bounded by 
$$-(1- \epsilon)\left(\sum_{\alpha\in A}(h_{\alpha}-e_{\alpha})\right) + \epsilon|\ul{e}|$$ for some small $\epsilon$. From this we see that the $\ul{e}$-term of (\ref{eqn:mobius_series_Anonempty}) converges in the dimensional topology for $|T| < \LL^{-1 + \eta}$ and takes values with dimension bounded by $\epsilon |\ul{e}|$. By \ref{subsub:dimtopdiscussion}, we have the desired convergence of the series (\ref{eqn:mobius_series_Anonempty}) in the dimensional topology. 

In the same way, the Hadamard norm of the $\ul{e}$-term of (\ref{eqn:mobius_series_Anonempty}) is bounded by
$$\sum_{\substack{(h_{\alpha})_{\alpha\in A} \\ h_{\alpha} \geq e_{\alpha}}} q^{-(1-\epsilon)\sum_{\alpha\in A}(h_{\alpha} - e_{\alpha})}q^{\epsilon |\ul{e}|} = \left(\frac{1}{1-q^{-1+\epsilon}}\right)^{|A|}q^{\epsilon |\ul{e}|}$$
which by \ref{subsub:hadamtopdiscussion} is enough to deduce Hadamard convergence. Point counting convergence is handled in the same way. 
\end{proof}

To conclude the proof of Theorem~\ref{theorem:precisebourqui}, we combine the decompositions in (\ref{eq:motivic-height-zeta-decomp}), (\ref{eqn:zeta-empty-decomp}) and (\ref{eqn:zeta-nonempty-decomp}) with Propositions~\ref{prop:A-empty} and~\ref{prop:A-nonempty}, to show that 

$$(1-(\LL T)^a)^{\mathrm{rk}\mathrm{Pic}(X_{\Sigma})} Z_{\bbP^1, U, h_0}^{\mmot}(T)$$
is a finite sum of series that converge for $|T|< \LL^{-1+\eta}$. Moreover, the only ones that give a non-zero contribution to the value at $\LL^{-1}$ are those corresponding to $A = \varnothing$, $J = \varnothing$ and $\delta$ of maximal dimension, and their contribution is given by Proposition~\ref{prop:A-empty}-(3). Using relation (\ref{eqn:dimension-rank-relation}), we conclude that the value of our series at $T = \LL^{-1}$ is

$$a^{\rho}\alpha^*(X_{\Sigma})\LL^{r}(1-\LL^{-1})^{-\rho} \sum_{\ul{e}\in \bbZ_{\geq 0}^{\Sigma(1)}} \mu_{\Sigma}(\ul{e})\LL^{-|\ul{e}|}.$$

\begin{remark}\label{remark:limit_dominant_term}
In fact, the proof of Lemma~\ref{lemma:euler-product-convergence} allows us to deduce that this value is of the form $$a^{\rho}\alpha^{*}(X_{\Sigma})\LL^{\rho} + \text{terms of lower dimension},$$
where we recall that $a^{\rho}\alpha^{*}(X_{\Sigma})$ is a positive integer. 
\end{remark}
\begin{proof}[Proof of Corollary~\ref{corollary:bourqui}] We may apply Proposition 4.7.3.1 from \cite{bilu:motiviceulerproducts}. While it is stated over $\bbC$ and in the topology of weights used in loc.cit., its proof is valid over any field $K$ replacing weight with dimension multiplied by two, and using the Euler-Poincar\'{e} polynomial (see \cite[Chapter 2 -- (3.5.9), Proposition 3.5.10 and Corollary 3.5.12]{chambert-loir-et-al:motivic-integration}) instead of the Hodge-Deligne polynomial. The condition on the effectivity of the value at $\LL^{-1}$ can be replaced by the contents of Remark~\ref{remark:limit_dominant_term}, because it is only used in the proof to produce non-zero coefficients for the corresponding Hodge-Deligne polynomial. 
\end{proof}
\section{The configuration random variable} \label{section:configuration-random-variable}
In this section we prove the following generalization of Theorem~\ref{theorem:labelled-configuration-spaces}, which also strengthens and provides a more natural formulation of \cite[Corollary B]{howe:mrv1}. 
\begin{theorem}\label{theorem:conf-rv-body} Let $X$ be an irreducible variety over a field $K$. If 
\begin{enumerate}
\item $K$ is arbitrary, $X$ is stably rational, and $\phi$ is the measure to $\widehat{\calM_K}$ or
\item $K=\bbC$ and $\phi$ is the Hodge measure to $\widehat{K_0(\HS)}$, or
\item $K=\bbF_q$ and $\phi$ the zeta measure to $\calH_1$,
\end{enumerate}
then
\[ \lim_{d\to \infty}\frac{[C^{\lambda \cdot *^d} (X)]_{\phi}}{[C^{|\lambda|+d}(X)]_{\phi}}  = C^{\lambda}_{X} \left( \frac{1}{1 + \bbL_\phi^{\dim X}}\right). \]
\end{theorem}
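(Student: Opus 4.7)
The plan is to reduce the statement to a single-variable generating-function calculation governed by weak rationality of $Z_X^\Kap$. My starting point is the motivic Euler product factorization
\[
\prod_{x \in X}\bigl(1 + t_1 + \cdots + t_k + s\bigr) \;=\; \prod_{x \in X}(1+s) \cdot \prod_{x \in X}\!\left(1 + \frac{\sum_i t_i}{1+s}\right),
\]
which holds in $\widehat{\calM_K}[[s, t_1, \ldots, t_k]]$ since $(1+s)$ is invertible there. Extracting the coefficient of $t^\lambda$ on both sides, and applying the scalar homogeneity $C^\lambda_X(c \cdot a) = c^{|\lambda|} C^\lambda_X(a)$ (which follows immediately from substituting $t_s \mapsto c\, t_s$ in the defining identity $\prod_x(1 + a\sum_s t_s) = \sum_\lambda C^\lambda_X(a) t^\lambda$), I obtain the identity
\[
\sum_{d \geq 0} [C^{\lambda \cdot *^d} X]\, s^d \;=\; [C^\lambda X] \cdot \frac{G(s)}{(1+s)^{|\lambda|}},
\]
where $G(s) := \prod_{x \in X}(1+s) = Z_X^\Kap(s)/Z_X^\Kap(s^2)$; the last equality comes from $\prod_x(1+s)\prod_x(1-s) = \prod_x(1-s^2)$ together with $\prod_x(1-s^k) = Z_X^\Kap(s^k)^{-1}$.

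Next, I would analyze the asymptotic behavior of the Taylor coefficients of $G(s)$ and $G(s)/(1+s)^{|\lambda|}$ in each of the three normed settings. By weak rationality of $Z_X^\Kap$, the product $(1 - \bbL_\phi^{\dim X} s)\, Z_X^\Kap(s)$ and its inverse converge absolutely in a neighborhood of $s = \bbL_\phi^{-\dim X}$ to a nonzero limit. Meanwhile, $Z_X^\Kap(s^2)$ converges absolutely and is a unit on the closed ball $|s| \leq \|\bbL_\phi^{-\dim X}\|$, because its own singularities sit at $|s| \geq \|\bbL_\phi^{-\dim X/2}\| > \|\bbL_\phi^{-\dim X}\|$. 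Hence $(1 - \bbL_\phi^{\dim X} s)\, G(s)$ converges in a neighborhood of $s = \bbL_\phi^{-\dim X}$ to a nonzero class $A$, and $[C^n X]/\bbL_\phi^{n \dim X} \to A$. Multiplying by $(1+s)^{-|\lambda|}$, which is analytic and nonvanishing at $s = \bbL_\phi^{-\dim X}$, gives the analogous statement for $G(s)/(1+s)^{|\lambda|}$ with the limiting residue rescaled by $(1 + \bbL_\phi^{-\dim X})^{-|\lambda|}$.

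Taking the ratio of these two asymptotics gives
\[
\lim_{d \to \infty} \frac{[C^{\lambda \cdot *^d} X]_\phi}{[C^{|\lambda|+d} X]_\phi} \;=\; \frac{[C^\lambda X]}{(1 + \bbL_\phi^{-\dim X})^{|\lambda|}\, \bbL_\phi^{|\lambda|\dim X}} \;=\; \frac{[C^\lambda X]}{(1 + \bbL_\phi^{\dim X})^{|\lambda|}},
\]
where the final equality uses $\bbL_\phi^{\dim X}(1+\bbL_\phi^{-\dim X}) = 1 + \bbL_\phi^{\dim X}$; by homogeneity applied to $c = 1/(1+\bbL_\phi^{\dim X})$ this equals $C^\lambda_X\bigl(1/(1+\bbL_\phi^{\dim X})\bigr)$, as claimed. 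The main technical obstacle is carrying out the coefficient-level asymptotic transfer rigorously in each topology. In the dimensional setting for $X$ stably rational, and in the Hodge setting, it follows from weak rationality together with filtration-based norm estimates in the spirit of Lemma~\ref{lemma:euler-product-convergence}. The Hadamard case is the most delicate and would require controlling Hadamard norms of the relevant coefficient series via Proposition~\ref{prop:kapranov-zeta-rational}, along with the same kind of power-sum estimates as those in the proofs of Theorems~\ref{theorem:convergence-pattern-avoiding} and \ref{theorem:finite-label-set}.
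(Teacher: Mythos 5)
The fatal step is the ``scalar homogeneity'' $C^\lambda_X(c\cdot a)=c^{|\lambda|}C^\lambda_X(a)$. This identity is false for a general scalar $c$: in the motivic Euler product, a label occurring with multiplicity $m$ contributes the relative symmetric power $\Sym^m_X$ of its coefficient class, not the ordinary $m$-th power, so multiplying the label class by $c$ produces $\Sym^m_X(c\,a)$, which differs from $c^m\Sym^m_X(a)$ unless $c$ is a monomial in $\bbL$ (the case actually used in the paper) or $\lambda$ is multiplicity-free. Your substitution argument $t_s\mapsto c\,t_s$ only proves the statement when $c$ is treated as a formal variable (a monomial substitution, which indeed yields the factor $c^{|\lambda|}$); it does not justify replacing the label class $a$ by the ring element $c\cdot a$. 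Concretely, your intermediate identity fails already for $\lambda$ a single part of size $2$: the correct identity (which both the paper's relative-Euler-product argument and a careful extraction of the $t^\lambda$-coefficient from your own factorization give) is
\[ \sum_{d\geq 0}[C^{\lambda\cdot\star^d}X]\,s^d=\Bigl(\prod_{x\in X}(1+s)\Bigr)\cdot C^\lambda_X\!\left(\tfrac{1}{1+s}\right), \]
with the power-series--labeled class $C^\lambda_X\!\left(\tfrac{1}{1+s}\right)$ on the right, and for $\lambda=(2)$, $X=\bbA^1$, the coefficient of $s^1$ is $[X][C^2X]-[X]^2+[X]$ (point count $q^3-2q^2+q$, which matches a direct count), whereas your formula $[C^\lambda X]\,G(s)/(1+s)^{|\lambda|}$ predicts $[C^2X]([X]-2)$ (point count $q^3-3q^2+2q$). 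For the same reason your closed form $[C^\lambda X]/(1+\bbL_\phi^{\dim X})^{|\lambda|}$ is \emph{not} equal to the theorem's limit $C^\lambda_X\bigl(1/(1+\bbL_\phi^{\dim X})\bigr)$ except when all parts of $\lambda$ equal $1$.

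Beyond this, the reduction you propose also bypasses the genuinely delicate part of the argument: once one has the correct identity above, one must show that the series $C^\lambda_X\!\left(\tfrac{1}{1+s}\right)$ can be evaluated at $s=\bbL_\phi^{-\dim X}$ and that the value is $C^\lambda_X\bigl(1/(1+\bbL_\phi^{-\dim X})\bigr)$, i.e.\ continuity of $a\mapsto C^\lambda_X(a)_\phi$ on (the closure of) $\bbZ[\bbL^{\pm1}]$ -- this is Lemma~\ref{lemma.continuity-configuration-spaces} and Lemma~\ref{lemma:values_conf_series}, and in the Hadamard case it requires the explicit norm estimate $\|C^\lambda_X(a+h)-C^\lambda_X(a)\|\leq M\bigl((1+\|h\|)^{|\lambda|}-1\bigr)$; your sketch replaces this by an evaluation of the rational function $1/(1+s)^{|\lambda|}$, which is only legitimate because of the false homogeneity. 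The only homogeneity actually available, and the one the paper uses in the last line of its computation, is with respect to powers of $\bbL$: $C^\lambda_X(\bbL^{-n}b)=\bbL^{-n|\lambda|}C^\lambda_X(b)$, which converts $\bbL_\phi^{-n|\lambda|}C^\lambda_X\bigl(1/(1+\bbL_\phi^{-n})\bigr)$ into $C^\lambda_X\bigl(1/(1+\bbL_\phi^{n})\bigr)$. Your weak-rationality analysis of $(1-\bbL^{\dim X}s)\,G(s)$ and the final ``ratio of partial sums'' manipulation are fine and parallel the paper, but the proof as written does not establish the theorem.
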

Here the element $ C^{\lambda}_{X} \left( \frac{1}{1 + \bbL_\phi^{\dim X}}\right)$ will be made sense of by showing that the map 
\[ a \mapsto C^\lambda_X(a)_\phi \]
extends by continuity to the closure of $\bbZ[\bbL^{\pm1}]\cong \phi(\bbZ[\bbL^{\pm1}])$, which contains
\[ \frac{1}{1+\bbL_\phi^{\dim X}} = \bbL_\phi^{-\dim X}\frac{1}{1+\bbL_\phi^{-\dim X}}=\bbL_\phi^{-\dim X} -\bbL_\phi^{-2\dim X} +.... \]

We also note that in case (1) of the theorem, the assumption that $X$ is stably rational is only there to ensure $Z^\Kap_X(t)$ is weakly rational, and could be replaced with that condition.  

\subsection{Continuity of labeled configuration spaces} We now prove a lemma giving the continuity properties required to make reasonable sense of $C^\lambda\left(\frac{1}{1+\bbL_\phi^{\dim X}}\right)$ and similar quantities. In the dimension topology (and thus also for the Hodge measure) a very strong continuity on the entire Grothendieck ring follows immediately from the definitions. The case of the Hadamard topology is more subtle because we do not know any suitable general bounds on the Hadmard norm of a labeled configuration space. However, if we restrict to $\bbZ[\bbL^{\pm 1}]$, which is enough for our purposes here, a simple estimate will suffice. 
  
\begin{lemma}\label{lemma.continuity-configuration-spaces} Suppose $X\rightarrow S$ is a map of varieties over a field $K$. 
\begin{enumerate}
\item The map
\[ \calM_X \rightarrow \calM_{C^\lambda(X)},\; a \mapsto C^\lambda_X(a) \]
is continuous for the dimension topologies on both sides and thus induces a continuous map
\[ \widehat{\calM_X} \rightarrow \widehat{\calM_{C^\lambda(X)}},\; a \mapsto C^\lambda_X(a). \]
\item If $K=\bbC$ and $\phi$ is the Hodge measure to $\widehat{K_0(\HS)}$, then composition of the arrow from (1) with the forgetful map $\widehat{\calM_{C^\lambda(X)}} \rightarrow \widehat{\calM_\bbC}$ and $\phi$ induces a continuous map 
\[ \widehat{\calM_X} \rightarrow \widehat{K_0(\HS)},\; a \mapsto C^\lambda_X(a)_\phi. \]
\item If $K=\bbF_q$ and $\phi$ is the zeta measure, the map 
\[ \bbZ[\bbL^{\pm}]=\phi(\bbZ[\bbL^{\pm}])\rightarrow \calH_1,\; a\mapsto C^\lambda_X(a)_\phi =Z_{C^\lambda_X(a)}(t) \]
extends to a continuous map
\[ \widehat{\bbZ[\bbL^{\pm 1}]}=\overline{\phi(\bbZ[\bbL^{\pm1}])} \rightarrow \calH_1,\; a \mapsto C^\lambda_X(a)_\phi \]
where the domain is the completion of $\bbZ[\bbL^{\pm 1}]$ for the induced norm, or equivalently the closure of $\phi(\bbZ[\bbL^{\pm1}])$ in $\calH_1$. 
\end{enumerate}
\end{lemma}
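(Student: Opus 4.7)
Each part reduces to continuity of the relative symmetric power $\Sym^n_X$ (for each fixed $n$) in the appropriate topology, via the defining formula
\[ C^\lambda_X(a) = \left( \prod_s \Sym^{\lambda(s)}_X(a) \right)_{*,X} \]
from (\ref{eq.config-space-def}) together with continuity of the external product $\boxtimes$ and the configuration-space restriction $*,X$. The underlying mechanism throughout is the addition formula (\ref{eqn:symmectric-power-addition}), which gives
\[ \Sym^n_X(a + \epsilon) - \Sym^n_X(a) = \sum_{k=1}^{n}\Sym^{n-k}_X(a) \boxtimes \Sym^k_X(\epsilon), \]
so a perturbation $\epsilon$ whose symmetric powers $\Sym^k_X(\epsilon)$ are all small (for $k \geq 1$) will produce a correspondingly small change in $\Sym^n_X(a)$.

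For part (1), the needed input is a filtration estimate: if $a \in \mathrm{Fil}_d \calM_X$, then $\Sym^n_X(a) \in \mathrm{Fil}_{nd} \calM_{\Sym^n X}$. I would first verify this on effective relative classes $[Y/X]$ (a standard bound on the relative dimension of $\Sym^n_X Y$ over $\Sym^n X$) and then extend to arbitrary classes by induction via (\ref{eqn:symmectric-power-addition}). Since $\boxtimes$ and the restriction to $C^\lambda X$ preserve filtrations, this implies continuity of $a \mapsto C^\lambda_X(a)$ on $\calM_X$, and the continuous extension to $\widehat{\calM_X}$ is then automatic. Part (2) follows by composing with the two continuous ring homomorphisms $\widehat{\calM_{C^\lambda X}} \to \widehat{\calM_\bbC} \to \widehat{K_0(\HS)}$ (the forgetful map and Deligne's Hodge realization).

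For the Hadamard case (3), restricting to $\bbZ[\bbL^{\pm 1}]$ with the induced norm $||\sum c_i \bbL^i||_{\phi,H} = \sum |c_i|q^i$, the strategy is to write $Z_{C^\lambda_X(a)}(t)$ explicitly and bound it. Applying Proposition~\ref{prop-zeta-measure-lambda-ring} (that the zeta measure is a map of pre-$\lambda$ rings) together with (\ref{eqn:symmectric-power-addition}), for $a = \sum c_i \bbL^i \in \bbZ[\bbL^{\pm 1}]$ the quantity $Z_{C^\lambda_X(a)}(t)$ becomes a finite polynomial expression in $\phi(a) \in \calR_1$ and in the zeta functions $Z_{\Sym^\mu X}(t)$ of symmetric powers of $X$ (for $|\mu|\leq|\lambda|$), together with Witt-multiplicative inverses of the latter when negative coefficients $c_i$ appear. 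By Proposition~\ref{prop:kapranov-zeta-rational} and Deligne's weight bounds, each of these finitely many factors has finite Hadamard norm, and expanding around $a$ via (\ref{eqn:symmectric-power-addition}) yields a Lipschitz-type estimate
\[ ||Z_{C^\lambda_X(a+\epsilon)}(t) - Z_{C^\lambda_X(a)}(t)||_H \leq K(\lambda,a)\cdot||\phi(\epsilon)||_H \]
on bounded Hadamard balls. Since $\calH_1$ is complete for $||\cdot||_H$, this uniform continuity on $\bbZ[\bbL^{\pm 1}]$ gives the required extension to $\widehat{\bbZ[\bbL^{\pm 1}]}$.

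The main obstacle is the two intertwined bookkeeping issues in part (3): the restriction ``$*,X$'' must be unwound via an inclusion-exclusion relating $C^\lambda_X$ to products of $\Sym^{\lambda(s)}_X$ on overlapping loci, and negative integer coefficients $c_i$ force one to invoke $\sigma_t(-c)=\sigma_t(c)^{-1}$, so that the Witt-inverses $Z_{\Sym^\mu X}(t)^{-1}$ must themselves be Hadamard functions of controlled norm. Both are ultimately consequences of the weak rationality of $Z^\Kap_X(t)$ for the zeta measure established in \S\ref{section:hadamard_functions}, so that once these ingredients are in place, the estimates combine cleanly because $\lambda$ is fixed and only finitely many symmetric-power operations participate.
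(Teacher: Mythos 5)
Your plan for parts (1) and (2) is what the paper does (it records them as immediate dimension estimates), and your skeleton for (3) --- perturb, expand through the addition formula (\ref{eqn:symmectric-power-addition}), use that only finitely many terms occur because $\lambda$ is fixed, bound their Hadamard norms, and extend by completeness --- is also the paper's. The problem is that the Lipschitz-type estimate you assert \emph{is} the content of part (3), and the justification you sketch for it is off in two ways. First, $Z_{C^\lambda_X(a)}(t)$ is not a polynomial expression in $\phi(a)$ and the $Z_{\Sym^\mu X}(t)$: symmetric-power/Adams operations of $a$ intervene (for instance $\Sym^2$ applied to $[X](\bbL^{k_1}+\bbL^{k_2})$ produces $[\Sym^2X](\bbL^{2k_1}+\bbL^{2k_2})+[X]^2\bbL^{k_1+k_2}$, and $\bbL^{2k_1}+\bbL^{2k_2}$ is not a polynomial in $\bbL^{k_1}+\bbL^{k_2}$), so it is not clear from your description that the ``finitely many factors'', hence the constant $K(\lambda,a)$, can be chosen uniformly in the perturbation $\epsilon$ --- and that uniformity is exactly what has to be proved. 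Second, weak rationality (Proposition~\ref{prop:kapranov-zeta-rational}) is not the relevant input here: any rational function has finite Hadamard norm, and the reciprocal power series has the \emph{same} norm (its divisor is the negative), so negative coefficients cost nothing; what genuinely needs an argument is that the contribution of $\epsilon$ to the expansion is controlled by $\|\phi(\epsilon)\|_H$.

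The paper supplies this missing uniformity by a normalization you do not mention: write the perturbation as $h=\sum_{i=1}^N\epsilon_i\bbL^{k_i}$ with $\epsilon_i\in\{\pm1\}$ (so $\|\phi(h)\|_H=\sum_i q^{k_i}$) and use the identity $\Sym^m_X(\bbL^\ell)=\bbL^{m\ell}$; then (\ref{eq.config-space-def}) and (\ref{eqn:symmectric-power-addition}) give
\[ C^\lambda_X(a+h)-C^\lambda_X(a)=\sum_{\substack{\sum_{j=0}^N\lambda_j=\lambda\\ \lambda_0\neq\lambda}}\Bigl(\Sym^{\lambda_0}_X(a)\prod_{j=1}^N\Sym^{\lambda_j}_X(\epsilon_j)\Bigr)_{*,X}\bbL^{\sum_{j=1}^N|\lambda_j|k_j}, \]
in which all dependence on $h$ sits in the monomials $\bbL^{\sum_j|\lambda_j|k_j}$, while the bracketed classes range over a \emph{finite} set depending only on $a$ and $\lambda$ (since each $\epsilon_j=\pm1$), hence of Hadamard norm at most some $M=M(a,\lambda)$. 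Summing the monomial norms yields $\|C^\lambda_X(a+h)-C^\lambda_X(a)\|_H\le M\bigl((1+\|\phi(h)\|_H)^{|\lambda|}-1\bigr)$, which is the local estimate you wanted; with this observation inserted, your outline becomes the paper's argument.
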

\begin{proof}
The first two statements are immediate from the definitions via simple dimension estimates. We now treat the third statement; in the proof, we denote the Hadamard norm by $|| \cdot ||$.

Fix $a \in \bbZ[\bbL^{\pm 1}] $, and consider a perturbation $a + h$. 
We write $h$ in the slightly unusual expansion $h= \sum_{i=1}^N \epsilon_i \bbL^{k_i}$ 
for $\epsilon_i \in \{\pm 1\}$ and $\epsilon_i=\epsilon_j$ if $k_{i}=k_{j}$; for example, we expand 
\[ 2 \bbL^2 - 3 \bbL^{-1} = \bbL^2 + \bbL^2 - \bbL^{-1} - \bbL^{-1} - \bbL^{-1}. \]
Note that as a consequence, we may express the Hadamard norm of $h$ in the following way:
$$||h|| = \sum_{i=1}^N ||\LL||^{k_i}.$$ 
Using the definition (\ref{eq.config-space-def}) of $C^\lambda_X$, the symmetric power addition formula (\ref{eqn:symmectric-power-addition}), and the identity $\Sym_X^k(\bbL^\ell)=\bbL^{k\ell}$ valid in any relative Grothendieck ring, we find
\[ C^\lambda_X(a+h)- C^\lambda_X(a) =\sum_{\substack{\sum_{j=0}^N\lambda_j = \lambda \\ \lambda_0 \neq \lambda}} \left(\Sym_X^{\lambda_0}(a)\prod_{j=1}^N\Sym^{\lambda_j}_X (\epsilon_i)\right)_{*,X} \bbL^{\sum_{j=1}^N |\lambda_j|k_j} \]
Here the sum is over tuples of partitions $(\lambda_0,\ldots,\lambda_N)$ such that their multiplicity vectors sum up to the multiplicity vector of $\lambda$, and such that the partition $\lambda_0$ is not equal to $\lambda$. 

The key point is that, since $\lambda$ and $a$ are fixed, the terms $(\ldots)_{*,X}$ appearing vary over a finite set of classes (for all $h$). We can thus bound their Hadamard norms above by a real number $M$, so that we obtain 
\begin{align*} ||C^\lambda_X(a+h )- C^\lambda(a)|| & \leq M \sum_{\substack{\sum_{j=0}^N\lambda_j = \lambda \\ \lambda_0 \neq \lambda}} ||\bbL||^{\sum_{j=1}^N |\lambda_j|k_j} \\
& \leq M \left( \left(1+\sum_{j=1}^N ||\bbL||^{k_j}\right)^{|\lambda|} - 1 \right) \\
&= M \left( \left(1+||h||\right)^{|\lambda|} - 1 \right)
 \end{align*}
To go from the first to the second line, observe that if the multiplicities of $\lambda$ are $m_1, \ldots, m_n$, then
\[ \sum_{\substack{\sum\lambda_j = \lambda \\ \lambda_0 \neq \lambda}} ||\bbL||^{\sum_{j=1}^N |\lambda_j|k_j} = \left(\prod_{l=1}^n\left( \sum_{\sum_{j=1}^N a_j\leq m_l} ||\bbL||^{\sum a_j k_j} \right)\right) - 1 \] 
and the $l$-th term inside the product is bounded above by 
\[\left( 1+\sum_{j=1}^{N}||\bbL||^{k_j}\right)^{m_l} = \sum_{\sum_{j=1}^N a_j\leq m_l}\binom{m_l}{a_1,\, a_2, \ldots, a_N,\, m_l - \sum_{j=1}^N a_j} ||\bbL||^{\sum_{j=1}^N a_j k_j}. \]

This verifies continuity at $a$, and we conclude since $a$ was arbitrary.  
\end{proof}

 \subsection{Configuration spaces with power series labels}\label{subsect.power_series_conf_spaces}
 Let now 
 \[ f(s) = a_0 + a_1 s + \ldots \in \calM_X[[s]] \]
 Using property (\ref{eqn:symmectric-power-addition}),  for any generalized partition $\lambda = (n_i)_i$, there is a natural way of defining a power series
 $$\Conf^{\lambda}_X(f(s))\in \calM_{\Conf^{\lambda}(X)}[[s]].$$
 Explicitly, we have, denoting by $()_{*,X}$ the pullback to $\Conf^{\lambda}(X)$,

\begin{eqnarray}\nonumber \Conf^{\lambda}_X(f(s)) &=&\left( \prod_{j\geq 1} \Sym^{n_j}_X\left(\sum_{i\geq 0} a_is^i\right) \right)_{*,X} \\
\nonumber &=& \left(\prod_{j\geq 1}\left( \sum_{\substack{(n_{i,j})_{i}\\ \sum_in_{i,j} = n_j}} \left(\prod_i \Sym^{n_{i,j}}_X(a_i)\right)s^{\sum_i in_{i,j}}\right)\right)_{*,X}\\
\label{eq.config-power-series-exp} & = &\sum_{\substack{(n_{i,j})_{i,j}\\ \sum_in_{i,j} = n_j}} \left( \prod_{i,j}  \Sym^{n_{i,j}}_X(a_i)\right)_{*,X} s^{\sum_{i,j} i n_{i,j}}.
 \end{eqnarray}

Arguing similarly to our proof of Lemma \ref{lemma.continuity-configuration-spaces}, we find

\begin{lemma}\label{lemma:values_conf_series} In the settings of Lemma \ref{lemma.continuity-configuration-spaces}, if $f(s) \in \bbZ[[s]]$ and if $f$ converges absolutely at $\bbL_{\phi}^r$, then so does $C^\lambda_X(f(s))$ and 
\[ \Conf^{\lambda}_X(f(\LL_\phi^{r})) = \left.\Conf^{\lambda}_X(f(s))\right|_{s = \LL_\phi^{r}}. \]
\end{lemma}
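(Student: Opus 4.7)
The plan is to prove the identity first on polynomial truncations of $f$, and then extend to all of $f$ by combining an absolute-convergence estimate with the continuity from Lemma~\ref{lemma.continuity-configuration-spaces}.

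First, for each polynomial truncation $f_N(s) := \sum_{i=0}^N a_i s^i$, the element $f_N(\bbL_\phi^r) = \sum_{i=0}^N a_i \bbL^{ri}$ already lies in $\bbZ[\bbL^{\pm 1}] \subset \calM_X$, and iterating the addition formula (\ref{eqn:symmectric-power-addition}) in the expansion $\Conf^\lambda_X(f_N(\bbL_\phi^r)) = \bigl(\prod_j \Sym^{n_j}_X(f_N(\bbL_\phi^r))\bigr)_{*,X}$ produces exactly the same sum indexed by tuples $(n_{i,j})$ that one obtains by formally substituting $s = \bbL_\phi^r$ into the polynomial expansion (\ref{eq.config-power-series-exp}) of $\Conf^\lambda_X(f_N(s))$. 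This yields the tautological identity $\Conf^\lambda_X(f_N(s))|_{s = \bbL_\phi^r} = \Conf^\lambda_X(f_N(\bbL_\phi^r))$ already inside $\calM_{\Conf^\lambda X}$.

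Next, I would establish absolute convergence of the series $\Conf^\lambda_X(f(s))|_{s = \bbL_\phi^r}$ in the target norm of $\phi$. Because each $a_i$ lies in $\bbZ$ and $\Sym^n_X(1)$ is the unit of $\calM_{\Sym^n X}$, the additivity formula identifies $\Sym^n_X(a_i)$ with $\pm \binom{n+|a_i|-1}{n}$ times the unit class; thus $\|\Sym^n_X(a_i)\|_\phi \leq |a_i|^n B_n$ for constants $B_n$ depending only on $n$, $X$, and $\phi$ (via the dimension filtration in cases (1) and (2) and via Deligne's bounds on $\|[\Sym^n X]\|_H$ in case (3)). Crucially, because $\lambda$ is fixed, every exponent $n_{i,j}$ appearing in (\ref{eq.config-power-series-exp}) is bounded by $\max_j n_j$, so only finitely many $B_n$ ever enter and the geometric cost of the restriction from $\Sym^\lambda X$ down to $\Conf^\lambda X$ is uniform in the partition data. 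Combining submultiplicativity of the norm with the elementary bound $\sum_{(n_i),\, \sum_i n_i = n_j}\prod_i x_i^{n_i} \leq (\sum_i x_i)^{n_j}$ (valid because multinomial coefficients are at least $1$), the total sum of absolute values of terms at $s = \bbL_\phi^r$ factors across $j$ and reduces to a constant multiple of $\bigl(\sum_i |a_i|\|\bbL_\phi\|^{ri}\bigr)^{|\lambda|}$, which is finite by the hypothesis that $f$ converges absolutely at $\bbL_\phi^r$.

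Finally, I would pass to the limit $N \to \infty$. The absolute convergence just established identifies each $\Conf^\lambda_X(f_N(s))|_{s = \bbL_\phi^r}$ with the partial sum of the series $\Conf^\lambda_X(f(s))|_{s = \bbL_\phi^r}$ obtained by restricting to tuples $(n_{i,j})$ supported on $i \leq N$, and these partial sums converge to the full sum in the target norm. Meanwhile, $f_N(\bbL_\phi^r) \to f(\bbL_\phi^r)$ in the target by hypothesis, so Lemma~\ref{lemma.continuity-configuration-spaces} gives $\Conf^\lambda_X(f_N(\bbL_\phi^r)) \to \Conf^\lambda_X(f(\bbL_\phi^r))$; combining with the polynomial identity of the first paragraph concludes the proof. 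The main obstacle is the absolute-convergence estimate of the second paragraph in the Hadamard setting (case 3), where the constants $B_n$ involve $\|[\Sym^n X]\|_H$ and one must track the $()_{*,X}$ restriction carefully through submultiplicativity without losing the uniformity in the combinatorial data.
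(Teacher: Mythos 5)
Your overall strategy --- prove the identity on the polynomial truncations $f_N$, establish absolute convergence of the substituted series, and pass to the limit using Lemma~\ref{lemma.continuity-configuration-spaces} --- is exactly the argument the paper intends (its proof is literally ``argue as in the proof of Lemma~\ref{lemma.continuity-configuration-spaces}''), and your first and third paragraphs are fine. In cases (1) and (2) your second paragraph is also harmless, since dimension (resp.\ weight) estimates do behave well under products, restriction to the configuration space, and pushforward. The genuine gap is in the Hadamard case (3), which you correctly flag as the crux but do not actually resolve. Two specific problems: (i) the claim that $\Sym^n_X(a_i)$ equals $\pm\binom{n+|a_i|-1}{n}$ times the unit class of $\calM_{\Sym^n X}$ is false. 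By (\ref{eqn:symmectric-power-addition}), $\Sym^2_X(2)=2\,\Sym^2_X(1)+\Sym^1_X(1)\boxtimes\Sym^1_X(1)$, and the cross term is the class of $X\times X\to \Sym^2X$, not the unit; already for $X=\bbP^1$ the pushforwards to the point differ ($2[\bbP^2]+[\bbP^1\times\bbP^1]\neq 3[\bbP^2]$), and for negative coefficients $\Sym^2_X(-1)=[X\times X\to\Sym^2X]-1$ is likewise not a signed multiple of the unit. (ii) More seriously, the ``submultiplicativity'' you invoke to bound $\bigl(\prod_{i,j}\Sym^{n_{i,j}}_X(a_i)\bigr)_{*,X}$ factorwise is unjustified: the factors are relative classes carrying no Hadamard norm of their own (your quantity $\|\Sym^n_X(a_i)\|_\phi$ is undefined), the pushforward to the point of the product restricted to $\Conf^\lambda X$ is not the product of pushforwards, and the Hadamard norm is not even monotone under the restriction $()_{*,X}$ (e.g.\ $\|[\bbA^1]\|_H=q<q+1=\|[\bbG_m]\|_H$). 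This is precisely the subtlety the paper points out before Lemma~\ref{lemma.continuity-configuration-spaces}: no general Hadamard bounds for labeled configuration spaces are known, so ``the geometric cost of the restriction is uniform'' cannot be asserted as you do; nor do the restricted classes range over a finite set as written, since the integers $a_i$ are infinitely many and unbounded.

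The repair is the device used in the proof of Lemma~\ref{lemma.continuity-configuration-spaces}: expand each $a_i\in\bbZ$ as a signed sum of $1$'s. After fully expanding (\ref{eq.config-power-series-exp}) with (\ref{eqn:symmectric-power-addition}), every restricted class appearing is of the form $\bigl(\boxtimes_u \Sym^{k_u}_X(\pm 1)\bigr)_{*,X}$ with $\sum_u k_u\leq|\lambda|$, hence ranges over a \emph{finite} set of classes determined by $\lambda$ and $X$ alone, whose $\phi$-norms are bounded by a single constant $M$. The sizes $|a_i|$ then enter only through multinomial multiplicities, and the substitution $s=\bbL_\phi^{r}$ contributes the exact factor $\|\bbL_\phi\|^{r\sum_{i,j} i\,n_{i,j}}$, so the sum of norms is bounded by (a constant times) $M\bigl(1+\sum_i |a_i|\,\|\bbL_\phi\|^{ri}\bigr)^{|\lambda|}$, which is finite by the hypothesis that $f$ converges absolutely at $\bbL_\phi^{r}$. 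With your second paragraph replaced by this estimate, the truncation identity and the limiting argument via Lemma~\ref{lemma.continuity-configuration-spaces} go through exactly as you wrote them, and the proof then coincides with the paper's.
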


In the dimension topology, the statement holds for $f(s) \in \calM_X[[s]],$
but we will not use this added generality here.

\begin{remark}\label{eqn:Euler_product_f} By expanding, one can see that the coefficient of $t^{\lambda}$ in the motivic Euler product
$$\prod_{x\in X} (1 + f_x(s)(t_1 + t_2 + \cdots)) = \prod_{x\in X} \left( 1+ \sum_{i,j}a_{i,x}s^it_j\right)$$
is exactly the image in $\calM_K[[s]]$ of $\Conf^{\lambda}_X(f(s))$.
\end{remark}

\subsubsection{An alternative expression for configuration spaces with power series labels}
To motivate what we want to establish in this section, let us discuss quickly the classical set-up that we are trying to imitate. When $X$ is a finite set and $(f_x(s))_{x\in X}$ is a family of formal power series indexed by $X$, the expansion of the finite product
\begin{equation}\label{eqn:finite_product_configuration}\prod_{x\in X} (1 + f_x(s)t)\end{equation}
can be written as
$$\sum_{n\geq 0} \left(\sum_{c\in C^n(X)}\prod_{x\in c} f_x(s)\right) t^n,$$
where $C^n(X)$ is the set of configurations of $n$ distinct points of $X$. In other words, the family $(f_x(s))_{x\in X}$ defines a function on $C^n(X)$ given by $c\mapsto \prod_{x\in c}f_x(s)$, and the coefficient of $t^n$ in the expansion of (\ref{eqn:finite_product_configuration}) is the summation of this function over $C^n(X)$. 

In the usual Grothendieck ring dictionary, elements of $\calM_{\Conf^{\lambda}(X)}$ can be thought of as motivic functions defined on $\Conf^{\lambda}(X)$, and taking the class of such an element in $\calM_K$ may be thought of as summation over $\Conf^{\lambda}(X)$. In view of Remark~\ref{eqn:Euler_product_f}, if one replaces finite products with motivic Euler products, one should expect $C^{\lambda}_X(f(s))$ to be equal in $\calM_{\Conf^{\lambda}(X)}$ to a motivic Euler product relatively to $\Conf^{\lambda}(X)$: to reproduce the fact that above every configuration we take the product over points of that configuration, the product will be over the universal configuration.

For $X$ a variety over $K$ and $\lambda$ a partition, let $\bc_{\lambda} / \Conf^{\lambda} X$ denote the universal configuration,
\[ \bc_{\lambda}  = \{(c, x) | x \in c\} \subset \Conf^{\lambda} X \times X\]
Denote by $j_{\lambda} : \bc_{\lambda} \rightarrow X$ the projection. Given $f(s) \in \calM_{X}[[s]]$, let $j_{\lambda}^* f$ be the corresponding series in $\calM_{\bc_{\lambda}}[[s]]$ given by pullback of coefficients along $j_{\lambda}$.

\begin{proposition} \label{prop:power-series-conf-space}We have the equality
$$C^{\lambda}_X(f(s)) = \prod_{y\in \bc_{\lambda}/C^{\lambda} X}(j_{\lambda}^*f)_y(s)$$
in $\calM_{\Conf^{\lambda}(X)}[[s]].$
\end{proposition}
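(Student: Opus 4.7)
The plan is to match coefficients of $s^m$ on both sides, using a decomposition of the universal configuration $\bc_\lambda$ by label together with the defining formula for relative motivic Euler products from \S\ref{subsect_gen_conf_spaces_Euler_prods}. Writing $\lambda = (n_j)_j$, the universal configuration decomposes as $\bc_\lambda = \bigsqcup_j \bc_\lambda^{(j)}$ over $\Conf^\lambda X$, where $\bc_\lambda^{(j)}$ parameterizes pairs $(c,x)$ with $x$ of label $j$ in $c$. Each map $\bc_\lambda^{(j)} \to \Conf^\lambda X$ is finite \'etale of degree $n_j$, and is the pullback of the universal degree-$n_j$ cover $\bc_{n_j} \to \Conf^{n_j} X$ along the forgetful map $\Conf^\lambda X \to \Conf^{n_j}X$ that retains only the label-$j$ points. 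Applying multiplicativity of the relative motivic Euler product with respect to disjoint unions of the index, I would first rewrite
\[
\prod_{y\in\bc_\lambda/\Conf^\lambda X}(j_\lambda^* f)_y(s) \;=\; \prod_j \prod_{y \in \bc_\lambda^{(j)}/\Conf^\lambda X}(j_\lambda^* f)_y(s).
\]

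Next, for each $j$ the key step is to identify
\[
\prod_{y \in \bc_\lambda^{(j)}/\Conf^\lambda X}(j_\lambda^* f)_y(s) \;=\; \bigl(\Sym^{n_j}_X(f(s))\bigr)_{*,X,j},
\]
where the right-hand side denotes the pullback to $\Conf^\lambda X$ of $\Sym^{n_j}_X(f(s))$ along the forgetful map. This is the heart of the proof. I would unwind the definition of the relative motivic Euler product (whose coefficient of $s^m$ is a finite sum of relative configuration-space classes indexed by partitions of $m$) and compare it term-by-term against the expansion (\ref{eq.config-power-series-exp}) of $\Sym^{n_j}_X(f(s))$ obtained from the symmetric power addition formula (\ref{eqn:symmectric-power-addition}). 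In both cases, the coefficient of $s^m$ becomes the same sum indexed by compositions $(n_i)_{i\geq 0}$ with $\sum_i n_i = n_j$ and $\sum_i i n_i = m$, each composition contributing $\prod_i \Sym^{n_i}_X(a_i)$, which records the class of configurations in which the $n_j$ label-$j$ points are distributed among the coefficients of $f(s)$. Multiplying these identities over $j$ and applying (\ref{eq.config-space-def}) yields $\Conf^\lambda_X(f(s))$.

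The main obstacle is executing this coefficient matching cleanly while keeping track of the relative structures over $\Conf^\lambda X$, $\Conf^{n_j}X$, and $X$. The essential fact that makes it go through is that the relative motivic Euler product commutes with flat base change on the index variety, which lets us identify the Euler product over $\bc_\lambda^{(j)}/\Conf^\lambda X$ with the pullback along $\Conf^\lambda X \to \Conf^{n_j}X$ of the Euler product over $\bc_{n_j}/\Conf^{n_j}X$; the latter in turn is, by construction in \cite{bilu-howe}, a restriction of the relative Kapranov zeta function on $K_0(\Var/X)$, which is precisely how $\Sym^{n_j}_X(f(s))$ was defined in \S\ref{sss.def-config}. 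Thus, once the base-change compatibility is in hand, the identification reduces to a tautology, and the proof is complete.
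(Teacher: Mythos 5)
Your overall route is the paper's: decompose the universal configuration $\bc_{\lambda}=\bigsqcup_j \bc_{\lambda}^{(j)}$ by label, expand the relative Euler product factor by factor, and match coefficients of $s^m$ against the expansion (\ref{eq.config-power-series-exp}). However, the step you yourself call the heart of the proof is exactly where your write-up has a gap. The identification
\[ \prod_{y\in \bc_{\lambda}^{(j)}/\Conf^{\lambda}X}(j_{\lambda}^*f)_y(s) \;=\; \bigl(\Sym^{n_j}_X(f(s))\bigr)_{*} \]
is not ``by construction'': the left-hand side is defined via \emph{relative} symmetric powers of $\bc_{\lambda}^{(j)}$ over $\Conf^{\lambda}X$ with coefficients pulled back from $X$, while $\Sym^{n_j}_X(f(s))$ is built from symmetric powers over $X$ itself, living over $\Sym^{n_j}X$. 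Equating the two (after restriction) is precisely the single-label case of the proposition you are proving, so declaring it a tautology is circular. The paper supplies the missing content: after expanding, each coefficient is a class $\left(\prod_{i,j}\Sym^{n_{i,j}}(b_i^{(j)}/\Conf^{\lambda}X)\right)_*$ with $\sum_i n_{i,j}=n_j$ (this constraint comes from the relative product being finite with $n_j$ factors); this class is the pullback of $\left(\prod_{i,j}\Sym^{n_{i,j}}(a_i)\right)_*$ along the natural map $\left(\prod_{i,j}\Sym^{n_{i,j}}(\bc_{\lambda}^{(j)}/\Conf^{\lambda}X)\right)_*\to\left(\prod_{i,j}\Sym^{n_{i,j}}X\right)_*$; and --- the key geometric observation --- that map is the identity, because when $\sum_i n_{i,j}=n_j$ a point of the target completely determines the configuration lying above it. Some argument of this kind is unavoidable, and it is exactly what your proposal omits.

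A secondary issue: you invoke compatibility of relative motivic Euler products with flat base change along $\Conf^{\lambda}X\to\Conf^{n_j}X$ to reduce to the universal single-label cover $\bc_{n_j}/\Conf^{n_j}X$. Your fiber-product description of $\bc_{\lambda}^{(j)}$ is correct, but no such base-change compatibility is among the properties of motivic Euler products recalled in the paper or cited from \cite{bilu-howe}, so it would need its own justification; and in any case it does not remove the need for the identification above, it only relocates it to $\bc_{n_j}/\Conf^{n_j}X$. The paper sidesteps both points by working directly over $\Conf^{\lambda}X$ with the explicit expansion; note also that the factors here have arbitrary constant term $a_0$ rather than $1$, which is handled by the finiteness of the relative product rather than by the Kapranov-zeta/Euler-product formalism you appeal to.
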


\begin{proof} We start by expanding the right-hand side. For every $i$, denote $b_i = j_{\lambda}^{*}a_i$. For every $j\geq 1$, there is a projection map
$$\pi_j:\Conf^{\lambda} X = \left(\prod_{i\geq 1}\Sym^{n_i}X\right)_{*,X} \to (\Sym^{n_{j}}X)_{*,X},$$ using which we introduce
$$\bc_{\lambda}^{(j)} =\{(x,c)\in \bc_{\lambda},\ x\in \pi_j(c)\}.$$
By definition, $\bc_{\lambda}$ is the disjoint union of the $\bc_{\lambda}^{(j)}$, $j\geq 1$. We also define $b_i^{(j)}$ to be the restriction of $b_i$ to $c_i^{(j)}$, so that in $\calM_{\bc_{\lambda}}$, we have
$$b_i = \sum_{j} b_{i}^{(j)}.$$
In other words, $b_i^{(j)}$ is the pullback of $a_i$ to $\bc_{\lambda}^{(j)}$. We now expand
\begin{eqnarray*} \prod_{y\in \bc_{\lambda}/\Conf^{\lambda} X}(j^{*}_{\lambda}f)_x (s)& = & \prod_{j\geq 1}\prod_{y\in \bc_{\lambda}^{(j)}/\Conf^{\lambda} X}\left(b^{(j)}_{0,y} + b^{(j)}_{1,y} s + b^{(j)}_{2,y}  s^2 + \cdots \right)\\
& = & \sum_{\substack{(n_{i,j})_{i,j}\\ \sum_{i}n_{i,j} = n_j} } \left(\prod_{i,j} \Sym^{n_{i,j}}( b_{i}^{(j)}/\Conf^{\lambda} X ) \right)_{*} s^{\sum_{i,j} n_{i,j}i}
\end{eqnarray*}
in $\calM_{\Conf^{\lambda}(X)}$. Note that only terms satisfying $\sum_{i}n_{i,j} = n_j$ for every~$j$ will contribute since for each $j$, the above product over $\bc_{\lambda}^{(j)}$ relatively to $\Conf^{\lambda}X$ is finite, with~$n_j$ factors.
Using the expansion (\ref{eq.config-power-series-exp}) of $\Conf^{\lambda}_X(f(s))$, it remains to compare, for every collection of integers $(n_{i,j})_{i,j}$ such that $n_j = \sum_{i\geq 0} n_{i,j}$ for every $j\geq 1$, the classes of
$$\left( \prod_{i,j} \Sym^{n_{i,j}} (a_i)\right)_{*}\ \ \ \text{and}\ \ \   \left(\prod_{i,j} \Sym^{n_{i,j}}( b_{i}^{(j)}/\Conf^{\lambda} X ) \right)_{*} $$
in $\calM_{\Conf^{\lambda}(X)}$.
For this, observe that the projections $\bc_{\lambda}^{(j)} \to X$ induce the projection
$$\left({\prod_{i,j}} _{\ \ /\Conf^{\lambda} X} \Sym^{n_{i,j}} (\bc_{\lambda}^{(j)}/\Conf^{\lambda}X)\right)_{*} \to \left(\prod_{i,j} \Sym^{n_{i,j}}(X)\right)_{*},$$
(where the product on the left is taken relatively to $\Conf^{\lambda}(X)$), so that $$\left(\prod_{i,j} \Sym^{n_{i,j}}( b_{i}^{(j)}/\Conf^{\lambda} X ) \right)_{*}\in\calM_{\Conf^{\lambda} X}$$ will be the pullback of  $\left( \prod_{i,j} \Sym^{n_{i,j}} (a_i)\right)_{*}$ via this map.
On the other hand, this map is actually the identity: since for every $j$, we have $\sum_{i,j}n_{i,j} = j$, a point $(c_{i,j})_{i,j} \in \left(\prod_{i,j} \Sym^{n_{i,j}}(X)\right)_{*}$ completely determines the configuration above it.
\end{proof}

\subsection{Proof of Theorem \ref{theorem:conf-rv-body}}\label{subsect.conf-rv-proof}
 We proceed as in the proof of Theorem~\ref{theorem:convergence-criterion} to show that the limit can be expressed as the value of a certain series;  the results of \ref{subsect.power_series_conf_spaces} will then allow us to conclude.
 We write $n = \dim X$. By weak rationality,
 \begin{equation} \label{eq.first-conf-convergence}
(1-\LL^n t) \sum_{d\geq 0} [\Conf^d(X)]t^d = (1-\LL^nt)\prod_{x\in X}(1+t) = (1-\LL^nt) \frac{Z_X^{\Kap}(t)}{Z_X^{\Kap}(t^2)}
\end{equation}
 converges absolutely at $t = \LL^{-n}_{\phi}$ to an invertible element. In particular, the sequence of partial sums $[\Conf^{d}(X)]_\phi\LL_\phi^{-nd}$ converges to an invertible element as $d\to \infty$.

 On the other hand, denoting by $\bc_{\lambda}\to \Conf^{\lambda}(X)$ the universal configuration, note that the generating series of $\Conf^{\lambda\cdot *^d}(X)$  in $\calM_{C^{\lambda}(X)}$ has the following motivic Euler product decomposition
 \[ \sum_{d\geq 0}[ C^{\lambda \cdot *^d}(X)] t^d = \prod_{x \in \left(X \times C^\lambda X - \bc_{\lambda} \right) / C^\lambda X} (1+t). \]

 Consider the quotient of power series with coefficients in $\calM_{C^{\lambda}(X)}$
 \begin{align}\label{eq.conf-rv-power-series-quotient-relative} \frac{ \sum_{d\geq 0} [C^{\lambda \cdot *^d}(X)] t^d}{\sum_{d\geq 0} [\Conf^d(X) \times C^{\lambda}(X)] t^d} & = \frac{\prod_{x \in \left(X \times C^\lambda X - \bc_{\lambda} \right) / C^\lambda X} (1+t)}{\prod_{x \in \left(X \times C^\lambda X \right) / C^\lambda X}(1+t)}\\
 \nonumber & =  \prod_{x\in \bc_{\lambda}/\Conf^{\lambda}X}\frac{1}{1+t}.\end{align}
 Applying proposition \ref{prop:power-series-conf-space} and integrating over $C^\lambda(X)$, we obtain an identity of power series with coefficients in $\calM_K$ 
\begin{equation} \label{eq.conf-rv-power-series-quotient} \frac{ \sum_{d\geq 0} [C^{\lambda \cdot *^d}(X)] t^d}{\sum_{d\geq 0} [\Conf^d(X)] t^d} = C^\lambda_X\left(\frac{1}{1+t}\right). \end{equation}
By Lemma \ref{lemma:values_conf_series}, this power series converges absolutely at $t=\bbL_\phi^{-n}$ to $C^\lambda_X\left(\frac{1}{1+\bbL_\phi^{-n}}\right).$  

 Multiplying the left-hand side of (\ref{eq.conf-rv-power-series-quotient}) by $(1-\LL^n t) \sum_{d\geq 0} [\Conf^d(X)] t^d $, which as observed above also converges absolutely at $t=\bbL^{-n}_\phi$, we conclude that the series
 \[ (1-\LL^n t) \left(\sum_{d\geq 0}[C^{\lambda \cdot *^d}(X)] t^d\right) \]
  also converges absolutely at $t = \LL^{-n}_\phi$. In particular, the sequence of partial sums $[C^{\lambda \cdot *^d}(X) ]\LL^{-nd}$ converges, and we apply our usual trick to compute 
  \begin{align*} \lim_{d\rightarrow \infty}  \frac{[C^{\lambda\cdot*^d} (X)]_{\phi}}{[C^{|\lambda|+d}(X)]_{\phi}} & =  \bbL^{-n|\lambda|}_\phi\frac{\lim_{d\rightarrow \infty} [C^{\lambda \cdot*^d} (X)]_{\phi}\bbL_\phi^{-nd}}{\lim_{d\rightarrow \infty} [C^{d}(X)]_{\phi}\bbL_\phi^{-nd}} \\
 &= \bbL_\phi^{-n|\lambda|}\frac{ \left( (1-\bbL^n t)\sum_{d\geq 0} [C^{\lambda \cdot *^d}(X)] t^d \right)|_{t={\bbL_\phi^{-n}}}}{\left((1-\bbL^n t)\sum_{d\geq 0} [\Conf^d(X)]  t^d\right)|_{t=\bbL_\phi^{-n}}} \\
 &=\bbL_\phi^{-n|\lambda|} \left.\left(\frac{ \sum_{d\geq 0} [C^{\lambda \cdot *^d}(X)] t^d}{\sum_{d\geq 0} [\Conf^d(X)] t^d} \right)\right|_{t=\bbL_\phi^{-n}}\\
 &=\bbL_\phi^{-n|\lambda|}\left.\left( C^\lambda_X\left(\frac{1}{1+t}\right)\right)\right|_{t=\bbL_\phi^{-n}} \\
 &=\bbL_{\phi}^{-n|\lambda|}\Conf^{\lambda}_X\left(\frac{1}{1+\LL^{-n}_{\phi} }\right)\\
 &=  C^{\lambda}_{X} \left( \frac{1}{1 + \bbL^{n}_{\phi}}\right).
 \end{align*} 
 
\section{Hadamard convergence and cohomological stability}
\label{section:hadamard-convergence-cohomo-stab}
It is by now well-known (cf., e.g., \cite{ellenberg-venkatesh-westerland, church-ellenberg-farb:rep-stability-finite-fields, farb-wolfson:etale-stability})  that for a sequence of smooth varieties over $\bbF_q$, cohomological stability combined with suitable bounds on Betti numbers implies stabilization of point-counts through the Grothendieck-Lefschetz trace formula. By essentially the same computation, we show in Theorem~\ref{theorem:cohom-stab-hadamard} below that weight stabilization combined with suitable dimension bounds on the cohomology implies Hadamard stabilization.

Note that cohomological stablization implies weight stabilization (as long as the stabilization is as Galois representations; e.g., if the stabilization is realized by maps of algebraic varieties). In particular, point-counting results previously established via stable cohomology can be upgraded automatically to Hadamard stabilization: see Corollary~\ref{cor:coh_stab_hadamard} for a precise statement. For example, this gives an alternate proof of Theorem~\ref{theorem:labelled-configuration-spaces} for varieties admitting compactifiable lifts\footnote{Ho \cite{ho:free-factorization} has established \'e{t}ale homological stability for configuration spaces in positive characteristic without a lifting hypothesis, but for our result one would need the same for all colored configuration spaces as well.} to characteristic zero by applying the \'{e}tale representation stability and dimension bounds of Farb-Wolfson \cite{farb-wolfson:etale-stability}. Moreover, this also furnishes a natural strategy that may be useful in proving further cases of our meta-conjecture --- when weight convergence to a Hadamard function is known, to establish Hadamard convergence it will suffice to establish bounds on the Betti numbers.

\begin{theorem}\label{theorem:cohom-stab-hadamard}
Suppose $X_n/\bbF_q$ is a sequence of smooth varieties such that
\begin{enumerate}
\item There is a Hadamard function $Z_\infty(t)$ such that, in the weight topology,
\begin{equation}\label{eqn:had-cohom-stab} \lim_{n \rightarrow \infty} Z_{X_n}(t q^{-\dim X_n}) = Z_\infty(t). \end{equation}
\item There exist real numbers $C > 0$ and  $1\leq \lambda < \sqrt{q}$ such that, for any $n$, there exists a prime $\ell$ coprime to $q$ such that
\[ \dim_{\bbQ_\ell} H^i(X_{n, \overline{\bbF}_q}, \bbQ_{\ell}) \leq C \lambda^i \]
\end{enumerate}
Then (\ref{eqn:had-cohom-stab}) holds also in the Hadamard topology.
 \end{theorem}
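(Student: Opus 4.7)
The plan is to combine the Grothendieck--Lefschetz trace formula with Deligne's weight bound and Poincar\'e duality to translate the cohomological hypotheses into a uniform-in-$n$ control on the Hadamard norm of $Z_{X_n}(tq^{-d_n})$ near the origin; weight convergence then pins the rest down. Concretely, write $d_n = \dim X_n$ and let $D_n, D_\infty \in \widehat{\bbZ[\bbC^{\times}]}^{w}$ denote the divisors associated to $Z_{X_n}(tq^{-d_n})$ and $Z_\infty(t)$. Using Grothendieck--Lefschetz, $D_n = \sum_{i,\alpha}(-1)^i[\alpha q^{-d_n}]$, with $\alpha$ ranging over Frobenius eigenvalues on $H^i_c(X_{n,\overline{\bbF}_q}, \bbQ_\ell)$ counted with multiplicity. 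Deligne's theorem gives $|\alpha| \leq q^{i/2}$ and hence $|\alpha q^{-d_n}| \leq q^{i/2-d_n} \leq 1$, so every $D_n$ (and therefore, by weight convergence, also $D_\infty$) is supported in the closed unit disk. Poincar\'e duality on the smooth variety $X_n$ converts the Betti bound into $\dim H^i_c(X_{n,\overline{\bbF}_q},\bbQ_\ell) = \dim H^{2d_n-i}(X_{n,\overline{\bbF}_q}, \bbQ_\ell) \leq C\lambda^{2d_n-i}$.

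The key step is to bound the ``tail'' $\|D_n^{(\leq r)}\|_H$ (the restriction of $D_n$ to $\{|a|\leq r\}$) uniformly in $n$ for small $r \in (0,1)$. An eigenvalue of Frobenius on $H^i_c$ scaled by $q^{-d_n}$ automatically satisfies $|a| \leq r$ whenever $i \leq i^\ast(r) := 2d_n + 2\log_q r$; bounding those ``automatic'' contributions by $q^{i/2-d_n}$ and the remaining ones (with $i > i^\ast(r)$) by the threshold $r$ itself gives
\begin{equation*}
\|D_n^{(\leq r)}\|_H \;\leq\; \sum_{0 \leq i \leq i^\ast(r)} C\lambda^{2d_n-i}\,q^{i/2-d_n} \;+\; r\sum_{i^\ast(r) < i \leq 2d_n} C\lambda^{2d_n-i}.
\end{equation*}
Substituting $j = 2d_n - i$ rewrites this as $C\sum_{j \geq -2\log_q r}(\lambda/\sqrt{q})^j + Cr\sum_{0 \leq j < -2\log_q r}\lambda^j$. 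The hypothesis $\lambda < \sqrt{q}$ makes the first expression a convergent geometric tail, which tends to $0$ as $r \to 0$; the second is $O(r \cdot \lambda^{-2\log_q r}) = O(r^{1-2\log_q \lambda})$, which also tends to $0$ since $\log_q \lambda < 1/2$. Crucially, both bounds are independent of $n$.

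Given $\epsilon > 0$, fix $r \in (0,1)$ small enough that $\|D_n^{(\leq r)}\|_H < \epsilon/2$ for every $n$ (by the tail estimate) and $\|D_\infty^{(\leq r)}\|_H < \epsilon/2$ (possible since $D_\infty \in \calW$ means $\sum_a |k_a^\infty||a|<\infty$, so the partial sums over $\{|a|\leq r\}$ vanish as $r \to 0$). The weight convergence hypothesis then yields $N$ such that $D_n - D_\infty$ is supported in $\{|a| \leq r\}$ for all $n \geq N$, whence
\begin{equation*}
\|D_n - D_\infty\|_H = \|(D_n-D_\infty)^{(\leq r)}\|_H \leq \|D_n^{(\leq r)}\|_H + \|D_\infty^{(\leq r)}\|_H < \epsilon.
\end{equation*}
The principal technical obstacle is establishing the uniform tail estimate of the previous paragraph; everything else is bookkeeping. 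The strict inequality $\lambda < \sqrt{q}$ enters at exactly that point, to force both sums to decay with $r$, and the argument breaks down at the threshold $\lambda = \sqrt{q}$, where the geometric tail is replaced by a harmonic-type sum that does not vanish.
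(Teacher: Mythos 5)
Your proof is correct and follows essentially the same route as the paper's: Grothendieck--Lefschetz plus Deligne's weight bounds and Poincar\'e duality give a tail estimate on the Hadamard norm of the small-radius part of the divisor of $Z_{X_n}(tq^{-\dim X_n})$, uniform in $n$ precisely because $\lambda<\sqrt{q}$, and weight convergence handles the part of the divisor away from the origin. The only cosmetic difference is that the paper phrases the estimate via ordinary cohomology $H^i$ (bounding eigenvalues of the dual by $q^{-i/2}$) and truncation at radii $q^{-m/2}$, whereas you work with $H^i_c$, transfer the Betti bound by duality, and truncate at a general radius $r$; under the substitution $j=2\dim X_n-i$ the two computations coincide.
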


\begin{remark}
We give the proof below, but first, some comments: 
\begin{enumerate}
\item The flexibility of allowing $\ell$ to vary with $n$ in (2) can be useful --- for example, this variation appears in the bounds for the Betti numbers of Hurwitz spaces established in \cite{ellenberg-venkatesh-westerland} (there the restriction $\ell > n$ arises because  at a point in the argument one needs the derived $S_n$-invariants in $\bbZ/\ell$ cohomology to be equal to the $S_n$-invariants).
\item The statement strikes a balance between brevity and utility, but the same method applies more generally: for example, for varieties that are not smooth with suitable bounds on compactly supported cohomology instead of cohomology, or to directly deduce the stabilization of $L$-functions in Remark~\ref{remark:l-function-stabilization} from cohomological stability and Betti bounds for the corresponding local systems as established (under lifting hypotheses) in \cite{farb-wolfson:etale-stability}.
\item  Ekedahl \cite{ekedahl:stacks} has defined a topology of polynomial growth refining the dimensional topology on the Grothendieck ring of varieties, and a slight modification of the proof of Theorem~\ref{theorem:cohom-stab-hadamard} shows that the zeta measure to the ring of Hadamard functions is continuous in this topology. The definition of Ekedahl's topology is a bit ad hoc, and one of our motivations for introducing the Hadamard topology was to find a more natural way to express a similar constraint.
\end{enumerate}
\end{remark}

\begin{proof}
We argue with divisors, i.e. elements of the completion of $\bbZ[\bbC^\times]$. So, write $D_n$ for the divisor attached\footnote{Recall from Section~\ref{subsect:witt_ring_rational} that to a meromorphic function $f$ we assign the divisor of $\frac{1}{f(1/t)}$, a normalization chosen so that the zeta function $\frac{1}{1-qt}$ of $\bbA^1_{\bbF_q}$ corresponds to $[q]$.} to $Z_{X_n}(t q^{-\dim X_n})$ and $D_\infty$ for the divisor attached to $Z_\infty(t)$.
For any divisor $D$, we write $\tau_m(D)$ for the part supported in the region $|z| \geq q^{-m/2}$. In particular, it is easy to see that
\[ \lim_{m \rightarrow \infty} \tau_m(D_\infty) = D_\infty \]
in the Hadamard topology.

On the other hand, by the definition of convergence in the weight topology, for any $m>0$, there exists $N>0$ such that for all $n \geq N$,
\begin{equation}\label{eqn:truncated-divisor} \tau_m(D_n)=\tau_m(D_\infty). \end{equation} Now, for any such $n$, taking $\ell$ as in (2) and fixing an embedding $\bbQ_{\ell} \rightarrow \bbC$, the Grothendieck-Lefschetz fixed point formula combined with Poincar\'{e} duality gives
\[ D_n = \sum_{i=0}^{2\dim X_n} (-1)^{i} [H^i(X_{n,\overline{\bbF}_q}, \bbQ_{\ell})^* \otimes_{\bbQ_{\ell}} \bbC].  \]
where the brackets denote taking the class in $K_0(\mathrm{Rep}_{\bbZ})$ (using the identification with $\bbZ[\bbC^{\times}]$ explained in Section~\ref{subsect:witt_ring_rational}). Then, combining (\ref{eqn:truncated-divisor}) with Deligne's \cite[Th\'{e}or\`{e}me I]{deligne:weilii} eigenvalue bounds which give that any eigenvalue $\alpha$ of Frobenius on $H^{i}(X_{n,\overline{\bbF}_q}, \bbQ_{\ell})^*$ satisfies $|\alpha|\leq q^{-i/2}$, we obtain
\[ \left|\left| D_n - \tau_m(D_\infty) \right|\right|_H \leq q^{-m/2}\sum_{i=0}^{m} \dim_{\bbQ_{\ell}} H^i(X_{n,\overline{\bbF}_q}, \bbQ_{\ell}) + \sum_{i=m+1}^{2\dim X_n} q^{-i/2} \dim_{\bbQ_{\ell}} H^i(X_{n,\overline{\bbF}_q}, \bbQ_{\ell}). \]
Invoking the bounds in hypothesis (2), we find this sum is bounded above by
\[ C\left(\frac{\lambda}{\sqrt{q}}\right)^{m} \left( m+1 + \frac{1}{1-\frac{\lambda}{\sqrt{q}}} \right), \]
and this bound goes to zero as $m \rightarrow \infty$ because $\lambda < \sqrt{q}.$ Thus, we conclude that also $D_n \rightarrow D_\infty$ in the Hadamard topology, as desired.
\end{proof}

In particular, we obtain the following corollary, making precise the statement that cohomological stabilization plus Betti bounds gives Hadamard convergence:
\begin{corollary}\label{cor:coh_stab_hadamard}Suppose $X_n / \bbF_q$ is a sequence of smooth varieties and $\ell \neq \mathrm{char}(\bbF_q)$ is a prime such that
\begin{enumerate}
\item For each $i \geq 0$, there is an $N \geq 0$ such that for all $n \geq N$ the $\Gal(\overline{\bbF}_q/\bbF_q)$-representations $H^i(X_{n,\overline{\bbF}_q}, \bbQ_\ell)^{\mathrm{ss}}$ are isomorphic to a fixed representation $H^i_\infty$ (here the superscript denotes semisimplification).
\item There exists $C > 0$ and  $\lambda < \sqrt{q}$ such that, for all $n$ and $i$
\[ \dim_{\bbQ_{\ell}} H^i(X_n, \bbQ_{\ell}) \leq C \lambda^i. \]

\end{enumerate}
Then, $Z_{X_n}(q^{-\dim X_n} t)$ converges in the Hadamard topology to
\[ \sum_{i=0}^{\infty}(-1)^i[H_{i,\infty}], \]
where here $H_{i,\infty}$ denotes the dual of $H^i_\infty$, with scalars extended to give a complex vector space by the choice of any embedding $\bbQ_{\ell} \rightarrow \bbC$.
\end{corollary}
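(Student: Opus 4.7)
The plan is to reduce to Theorem~\ref{theorem:cohom-stab-hadamard}. Its hypothesis (2) coincides with the present hypothesis (2) (take the same prime $\ell$ for every $n$), so the remaining task is to deduce hypothesis (1) of that theorem, namely weight convergence to a Hadamard function. The identification of the limit will then follow automatically, since Hadamard convergence refines weight convergence.

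First I would check that the formal alternating sum
\[ D_\infty \;:=\; \sum_{i \geq 0} (-1)^i [H_{i,\infty}] \]
converges absolutely for the Hadamard norm, hence defines an element $Z_\infty(t) \in \calH_1$. Indeed, hypothesis (1) forces $\dim H^i_\infty$ to equal the eventually stable value of $\dim H^i(X_{n,\overline{\bbF}_q}, \bbQ_\ell)$, and this is bounded by $C\lambda^i$ via hypothesis (2). On the other hand, the eigenvalues of Frobenius on $H_{i,\infty}$ (appearing after the shift $t \mapsto q^{-\dim X_n}t$) have absolute value at most $q^{-i/2}$ by Deligne's weight bound, exactly as used in the proof of Theorem~\ref{theorem:cohom-stab-hadamard}. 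Consequently
\[ \|D_\infty\|_H \;\leq\; \sum_{i\geq 0} C \lambda^i \cdot q^{-i/2} \;=\; \frac{C}{1 - \lambda/\sqrt q} \;<\; \infty, \]
since $\lambda < \sqrt q$.

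Next I would establish weight convergence of $Z_{X_n}(q^{-\dim X_n}t)$ to $Z_\infty(t)$. Writing $D_n$ for the divisor of the former, the Grothendieck--Lefschetz computation in the proof of Theorem~\ref{theorem:cohom-stab-hadamard} gives
\[ D_n \;=\; \sum_{i=0}^{2\dim X_n} (-1)^i \bigl[H^i(X_{n,\overline{\bbF}_q},\bbQ_\ell)^* \otimes_{\bbQ_\ell} \bbC\bigr], \]
and this divisor depends only on the semisimplifications of the $H^i$. Fixing $m > 0$, Deligne's bound ensures that only terms with $i \leq m$ contribute to the part of $D_n$ supported on $|z| \geq q^{-m/2}$; by hypothesis (1) applied to each of the finitely many indices $i \leq m$, there exists $N$ such that for $n \geq N$ each such contribution agrees with $(-1)^i[H_{i,\infty}]$. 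Thus the truncations of $D_n$ and $D_\infty$ to the region $|z| \geq q^{-m/2}$ eventually coincide, which is precisely weight convergence.

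With both hypotheses of Theorem~\ref{theorem:cohom-stab-hadamard} verified, that theorem yields Hadamard convergence $Z_{X_n}(q^{-\dim X_n}t) \to Z_\infty(t)$, and the limit is the Hadamard function corresponding to $D_\infty$, as claimed. The one delicate aspect is the bookkeeping between eigenvalues of Frobenius on $H^i$ versus $H^{i,*}$ and the shift by $q^{-\dim X_n}$, but this is already handled explicitly in the proof of Theorem~\ref{theorem:cohom-stab-hadamard} and presents no substantial new obstacle.
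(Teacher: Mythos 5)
Your proposal is correct and follows exactly the route the paper intends: the corollary is stated as an immediate consequence of Theorem~\ref{theorem:cohom-stab-hadamard}, and your verification of its two hypotheses --- absolute Hadamard convergence of $\sum_i(-1)^i[H_{i,\infty}]$ via the Deligne bound $|\alpha|\leq q^{-i/2}$ on the duals together with the Betti bound, and weight convergence by matching the truncations $\tau_m(D_n)=\tau_m(D_\infty)$ using that divisors depend only on semisimplifications --- is precisely the intended bookkeeping. No gaps; the argument is sound as written.
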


\appendix

\section{Computations}\label{appendix:computations}

In \cref{table.limit-divisor} below we gives the first $250$ terms for the divisor 
\[ \lim_{(d_1,d_2) \rightarrow \infty} Z_{C^{\bullet^{d_1}\star^{d_2}}(\bbA^1_{\bbF_q})}\left(t q^{-(d_1+d_2)}\right), \]
where here the limit is in the weight topology. When $q=2$, this limiting divisor does not appear to correspond to a Hadamard function (cf. Remark~\ref{remark:hadamard-non-convergence}), but nonetheless our results show the sequence also converges in the point-counting topology. To further illustrate how this can occur, in \cref{table.exact-divisor} we give the exact formula of the divisor when $d_1=d_2=40$. One can then compute to see the cancellation for point-counting: the $q=2$ Hadamard norm is $395.538829916911$ but the $q=2$ point-counting semi-norm is $0.181319714263592$.

\newpage
\begin{center} 
\captionof{table}{First $250$ terms of $\lim_{(d_1,d_2) \rightarrow \infty} Z_{C^{\bullet^{d_1}\star^{d_2}}(\bbA^1_{\bbF_q})}\left(t q^{-(d_1+d_2)}\right)$.}
\label{table.limit-divisor}

{\footnotesize \ttfamily\begin{longtable*}{rr}
{\normalsize\normalfont $i$} & {\normalsize\normalfont coefficient of $[q^{-i}] $}\\ \hline
0 & 1 \\
1 & -3 \\
2 & 5 \\
3 & -10 \\
4 & 24 \\
5 & -55 \\
6 & 118 \\
7 & -250 \\
8 & 540 \\
9 & -1166 \\
10 & 2475 \\
11 & -5218 \\
12 & 11028 \\
13 & -23267 \\
14 & 48830 \\
15 & -102167 \\
16 & 213525 \\
17 & -445513 \\
18 & 927444 \\
19 & -1927166 \\
20 & 3999248 \\
21 & -8288404 \\
22 & 17153790 \\
23 & -35457313 \\
24 & 73212391 \\
25 & -151015163 \\
26 & 311189028 \\
27 & -640657585 \\
28 & 1317827566 \\
29 & -2708586539 \\
30 & 5562810556 \\
31 & -11416477207 \\
32 & 23413972647 \\
33 & -47988657094 \\
34 & 98296020099 \\
35 & -201224291653 \\
36 & 411703666030 \\
37 & -841899534112 \\
38 & 1720748369045 \\
39 & -3515328234048 \\
40 & 7178192714838 \\
41 & -14651215348621 \\
42 & 29891622362909 \\
43 & -60960729520648 \\
44 & 124274709833930 \\
45 & -253252619275830 \\
46 & 515905274269151 \\
47 & -1050598369362088 \\
48 & 2138748809597243 \\
49 & -4352556333294442 \\
50 & 8855142419299783 \\
51 & -18010175104285365 \\
52 & 36619803977908694 \\
53 & -74437884037740152 \\
54 & 151271098981190102 \\
55 & -307330496794545563 \\
56 & 624233017196670858 \\
57 & -1267601222149736713 \\
58 & 2573455649992469320 \\
59 & -5223384420459129280 \\
60 & 10599650504339968588 \\
61 & -21504939006993240476 \\
62 & 43620910664324846165 \\
63 & -88463413927558983487 \\
64 & 179369094441716234105 \\
65 & -363620936146947211139 \\
66 & 737003893336634408989 \\
67 & -1493525070407312843889 \\
68 & 3026071553583774207695 \\
69 & -6130160318373268357117 \\
70 & 12416305165787441941888 \\
71 & -25144482169769700475430 \\
72 & 50912518513077694908444 \\
73 & -103071775867327392158564 \\
74 & 208636308694120684565878 \\
75 & -422256725089154019803026 \\
76 & 854478921031350689093305 \\
77 & -1728883425555349537772147 \\
78 & 3497607554122553346247355 \\
79 & -7074876073480110138967125 \\
80 & 14309034136324437898603161 \\
81 & -28936554185525659439497151 \\
82 & 58509927574525580489119380 \\
83 & -118293195094794161305291884 \\
84 & 239132486275311262352998014 \\
85 & -483356022483243207472458684 \\
86 & 976891909453073575341693979 \\
87 & -1974139172168771694130837742 \\
88 & 3988980483500702718090433305 \\
89 & -8059348467160056857850301254 \\
90 & 16281439069254327401112771720 \\
91 & -32888298017019215826487386452 \\
92 & 66427309605631891852427285538 \\
93 & -134155799021691025521771284468 \\
94 & 270913472774442897719424024492 \\
95 & -547029744205693924330614993253 \\
96 & 1104463220953743112679968006155 \\
97 & -2229730248996000703271900417220 \\
98 & 4501060935168121797776308252779 \\
99 & -9085308660778996913683241591916 \\
100 & 18336963259621884186312937330536 \\
101 & -37006564130480842135420881234823 \\
102 & 74678296938835045366712451653048 \\
103 & -150686722006049417188882164883466 \\
104 & 304033289328605736801837757889052 \\
105 & -613385462035346120893462386389806 \\
106 & 1237407104790484830103451785458541 \\
107 & -2496083454393123235346360455263145 \\
108 & 5034699273149624209432668615272245 \\
109 & -10154451234898616906382452153763994 \\
110 & 20478984176727698229478103214975622 \\
111 & -41298085225883776492838324830483147 \\
112 & 83276312671638650634105812715706158 \\
113 & -167912729770850527045567532995866050 \\
114 & 338545314297377669135739970834099721 \\
115 & -682529641465357484326226939785869136 \\
116 & 1375935937942737803716090764040548543 \\
117 & -2773622488464218826498184276726243059 \\
118 & 5590740893108400697907757830485514668 \\
119 & -11268463168356409496415532461615555613 \\
120 & 22710868528096610367194049445753934421 \\
121 & -45769580932908800976831871086499330994 \\
122 & 92234769856874390273795578514079010526 \\
123 & -185860598612913044801841864730269995334 \\
124 & 374503005630066811126878392442709025314 \\
125 & -754569018607279234423667610817191802785 \\
126 & 1520262851219368816074332126572089460056 \\
127 & -3062772964719969636087115981981018953541 \\
128 & 6170035792263619351050455292566978783598 \\
129 & -12429042123060060151003262179674049196038 \\
130 & 25036007215716677420539355986395133276393 \\
131 & -50427824677622751805359126326377802814807 \\
132 & 101567199482982873264392954021879354129190 \\
133 & -204557360491172243478624563111868531696823 \\
134 & 411960370485300375547687520519502226821470 \\
135 & -829611500542651667331806049712787706316782 \\
136 & 1670603456362831206847362302632157611640072 \\
137 & -3363965724925806599011266967125981954243654 \\
138 & 6773444935084913876696775096583596298183108 \\
139 & -13637908675027441436057116768747781739434355 \\
140 & 27457838849409380315413796530515268407489675 \\
141 & -55279688909583876580585833350065956851424889 \\
142 & 111287337512971930313744554502136783509139787 \\
143 & -224030470620565185138114187204885173045566046 \\
144 & 450972288103407951076429498671472480900894500 \\
145 & -907766787667089046300297874897356031995072893 \\
146 & 1827177046255434145173073629090501998350189480 \\
147 & -3677639154926299735399831830273114930706933417 \\
148 & 7401844724023358326931783359492391415803949804 \\
149 & -14896814949606945772712282120059272199690249898 \\
150 & 29979866434245490844583144348449344360693631079 \\
151 & -60332177366298214408691600910066186887843803897 \\
152 & 121409187297844700570116626172426598599222626198 \\
153 & -244307928267034553210257870812929829060224442649 \\
154 & 491594743709525111581865915372346686734405187936 \\
155 & -989146826836016362658122074912175022534944715678 \\
156 & 1990207473240877597531413403273254338876802547040 \\
157 & -4004240887908066523314775718374722296383353278432 \\
158 & 8056130304128719786977903098713542750921357140249 \\
159 & -16207551036094612708384128936868306743927319581255 \\
160 & 32605669827995537882141376003470500998619660883838 \\
161 & -65592449277668135923145148203994025195285602114161 \\
162 & 131947066627455132690094280750865960630570254544336 \\
163 & -265418368252534732163388638198968661024046332532369 \\
164 & 533885006556978841673859509747827333034790892137060 \\
165 & -1073866157949160528842644873292715224609980727316442 \\
166 & 2159923824107976255787679933553822112189909230569860 \\
167 & -4344229119554769725531975946302444010351029324981531 \\
168 & 8737218126698544324983354343181783399234144113040996 \\
169 & -17571949987350926356148244810740733916329389383280610 \\
170 & 35338915495737920749569941345730118033124022425352917 \\
171 & -71067838406403965873181708846730412674466081241594550 \\
172 & 142915644912669381123934767287154566227048796839910300 \\
173 & -287391133596243553978025526428865096061369492892216231 \\
174 & 577901771904942496377897298680702776641561566472346509 \\
175 & -1162042191566025096073333560418006767663163414264761452 \\
176 & 2336560963156321258206594903332568731935777435716478404 \\
177 & -4698073670829204910429951149497168205623270962654074924 \\
178 & 9446048030016615435964025795014143220692909203774526167 \\
179 & -18991891895201248092411829357483627878037061328499916676 \\
180 & 38183364495333086204204682114930004931552019076194474230 \\
181 & -76765868701357294714795824370173618727343318391694633802 \\
182 & 154329973881712836323222680064105310721139741449888709502 \\
183 & -310256335734223324216076665641969459705588665497173883804 \\
184 & 623705279202696289295066521317341717885886961875906583159 \\
185 & -1253795440938300516089142300669579826263425588827900823773 \\
186 & 2520359987921702495659754659758416556389346309156935588716 \\
187 & -5066256883382087793564964993618615137251978971902397547338 \\
188 & 10183584999304906134008466464344962026597976993924855770002 \\
189 & -20469307350940835151747049199320044998319334089960769299637 \\
190 & 41142879284101383116184594018941378889033330476918655928999 \\
191 & -82694267690764386827363962624920969968162035653827651121126 \\
192 & 166205513942814486717606830876444989743453016535327886029097 \\
193 & -334044906428413611391355165977888136830209140798816585873928 \\
194 & 671357414335178326430744817824055982152630817133280117748150 \\
195 & -1349249723471416882316855584026818371620127437068003020111147 \\
196 & 2711568626261232162145679308550885878804628167867689653316398 \\
197 & -5449274402470410677251482438878923505195001441752145264632550 \\
198 & 10950820710870015616176840466748551469141706650750291515619432 \\
199 & -22006180491808074821226633271648780399307360717352367604021410 \\
200 & 44221429731125930866818700424117906695145598103780411815103936 \\
201 & -88860978351315253312957785221314608550508787788296296518972158 \\
202 & 178558157622312958198293390705731236882640973045841129901489278 \\
203 & -358788644063983246567535867359362571245002544503178214240054286 \\
204 & 720921801100698726386694533287526587431214885606660302982340624 \\
205 & -1448532341524246442342248986970439190664607865886748063901262327 \\
206 & 2910441593791677410571423695509416902856886245179841083766133211 \\
207 & -5847635883801003434025648107168165059200700685568114138941772899 \\
208 & 11748774930279755297246537565315174630066197777734430657895529207 \\
209 & -23604551767385068594821463748108605314319647758871879636797902301 \\
210 & 47423098592203576534355458696951983878051045433043063955216443887 \\
211 & -95274169946308147840780403360682157059441079565869026559277533215 \\
212 & 191404251025076306444836648744474710925748794337580471279140380135 \\
213 & -384520256153027720832343588144677015994231262916517617702578354310 \\
214 & 772463885412221247750959518172309275593339223655535204987540150838 \\
215 & -1551774249253673696738895239551544173613933026639442037615040708270 \\
216 & 3117240924567039756245256068732453168640627865443628864456247013851 \\
217 & -6261865649063678707968392455091112497430296740097472568689378905507 \\
218 & 12578496812172787574329459026104607521094908851061688080277086644833 \\
219 & -25266520517449908363513058769191519238709604487373171885812655365167 \\
220 & 50752086623453499599495814541713658375763803773380355410356385019761 \\
221 & -101942248171446376404811306396751761494336671084842843723533994465034 \\
222 & 204760613968597157212010029876713486804465127371302600563171124022586 \\
223 & -411273399298062141627923580121235278530544185511903002504259040064832 \\
224 & 826051014634815005139464364267018370249005956581169493760770952633261 \\
225 & -1659110210148733290252072210623434146652014922787564201813714175078037 \\
226 & 3332236283939183203228707149826950977481720527754055478448883897915665 \\
227 & -6692503307379732644393325590390792165758287355586962855209339873065199 \\
228 & 13441066134885157374941142465150996170874001629665169604682521868959572 \\
229 & -26994247425219599704022048659260610065138046610246664866230534368138664 \\
230 & 54212717456807333803446468226485466563565290356089792803625447444191968 \\
231 & -108873864845805016850142237474283157059409319666122162165797565224475771 \\
232 & 218644559248718136649200739632525468463253521897267370725297356832492893 \\
233 & -439082717498431167805788938314933663970201549385476217022929944640890819 \\
234 & 881752513762637047350955880772283529471535676582889977855830104064904050 \\
235 & -1770678948540417404656911704952446357822125847897764735776925045160337542 \\
236 & 3555705269942297117063252158576023481330861467174302311925764323753647955 \\
237 & -7140104354903296768142592107673112682036551683319793755880528208657622002 \\
238 & 14337594493516387018587442604981980152255607264327504228993603568963746649 \\
239 & -28789956891592793162580701973206192193451887385613153685521881169643775108 \\
240 & 57809442325457598502624005464830403280755001880407991296344942812757465951 \\
241 & -116077927318088147658885109110973474677707297047338797617322014115670291868 \\
242 & 233073911365640918583982238401098113091737014051890351051448047525110442208 \\
243 & -467983879434681066427015485809761438819926018767785270070916286071708841159 \\
244 & 939639759662761645826607434203471991500948183010290880113340609940625137998 \\
245 & -1886623297459580540003449408545708292094079321179393559084522176323337329589 \\
246 & 3787933707788042453135351202393917950224275817218928913807658304446716058235 \\
247 & -7605240761445472998617207473729266666313927167438658855852176879694593013499 \\
248 & 15269226468590914708207748063222774107557618777595720957055954826812698597855 \\
249 & -30655939363580606732561996894410171491023294598074730332881732843263810990419 \\
250 & 61546844703480411367930079662454293248852333175511986470297385611751909013867 \\
\end{longtable*}}

\bigskip

\newpage
\captionof{table}{The divisor $Z_{C^{\bullet^{d_1}\star^{d_2}}(\bbA^1_{\bbF_q})}\left(t q^{-(d_1+d_2)}\right)$ for $d_1 = d_2 = 40$.}
\label{table.exact-divisor}

{\footnotesize \ttfamily
\begin{longtable}{rrrrr}
{\normalsize\normalfont $i$} & {\normalsize\normalfont coefficient of $[q^{-i}]$} & & {\normalsize\normalfont $i$} & {\normalsize\normalfont coefficient of $[q^{-i}]$}  \\ \hline
0  & 1              &  & 40 & 7178192706102             \\
1  & -3             &  & 41 & -14651215147355           \\
2  & 5              &  & 42 & 29891619749371            \\
3  & -10            &  & 43 & -60960704596332           \\
4  & 24             &  & 44 & 124274516700328           \\
5  & -55            &  & 45 & -253251337471208          \\
6  & 118            &  & 46 & 515897749760655           \\
7  & -250           &  & 47 & -1050558448626228         \\
8  & 540            &  & 48 & 2138554410364751          \\
9  & -1166          &  & 49 & -4351677301167434         \\
10 & 2475           &  & 50 & 8851418068846937          \\
11 & -5218          &  & 51 & -17995282472068951        \\
12 & 11028          &  & 52 & 36563266171699586         \\
13 & -23267         &  & 53 & -74233098656280122        \\
14 & 48830          &  & 54 & 150560424516434836        \\
15 & -102167        &  & 55 & -304959028474877462       \\
16 & 213525         &  & 56 & 616599979165804930        \\
17 & -445513        &  & 57 & -1243838077427284749      \\
18 & 927444         &  & 58 & 2501726843328379367       \\
19 & -1927166       &  & 59 & -5013001590559463419      \\
20 & 3999248        &  & 60 & 9998887821446433255       \\
21 & -8288404       &  & 61 & -19831768033161995124     \\
22 & 17153790       &  & 62 & 39068695645092664153      \\
23 & -35457313      &  & 63 & -76346502518625937950     \\
24 & 73212391       &  & 64 & 147772610374307278073     \\
25 & -151015163     &  & 65 & -282802569405937507518    \\
26 & 311189028      &  & 66 & 533995597251319358859     \\
27 & -640657585     &  & 67 & -992196836341429077092    \\
28 & 1317827566     &  & 68 & 1807725664871875879551    \\
29 & -2708586539    &  & 69 & -3213777325101403051314   \\
30 & 5562810556     &  & 70 & 5535545342239809752109    \\
31 & -11416477207   &  & 71 & -9140023894242417884359   \\
32 & 23413972647    &  & 72 & 14237505104399687238881   \\
33 & -47988657094   &  & 73 & -20437644238323518538138  \\
34 & 98296020099    &  & 74 & 26158394219496320597829   \\
35 & -201224291653  &  & 75 & -28551600325000321382432  \\
36 & 411703666030   &  & 76 & 25039544129795914295127   \\
37 & -841899534112  &  & 77 & -16225470533207349260900  \\
38 & 1720748369045  &  & 78 & 6777326492252181076930    \\
39 & -3515328234048 &  & 79 & -1343840109164979124000

\end{longtable}}

\end{center}

\bibliography{references}
\bibliographystyle{plain}
\end{document}